\newcommand{\x}{x_\infty}
\newcommand{\y}{y_\infty}
\def\sbar{\accentset{{\cc@style\underline{\mskip9mu}}}}
\def\mbar{\accentset{{\cc@style\underline{\mskip12mu}}}}
\def\lbar{\accentset{{\cc@style\underline{\mskip18mu}}}}
\titleformat*{\section}{\normalfont\large\bfseries}
\def\@seccntformat#1{\csname the#1\endcsname. }
\renewcommand{\labelenumi}{(\arabic{enumi})}
\newcommand{\Spec}{\operatorname{Spec}}
\newcommand{\Proj}{\operatorname{Proj}}
\newcommand{\sinfty}{\scalebox{0.6}[0.65]{$\infty$}}
\newcommand{\sminus}{\!\smallsetminus\!}
\newcommand{\Pinf}{\scalebox{0.7}[0.7]{$P_{\infty}\!$}}
\newcommand{\Qinf}{\scalebox{0.7}[0.7]{$Q_{\infty}\!$}}
\theoremstyle{definition}
\newtheorem{Def}{Definition}[section]
\newtheorem{Thm}[Def]{Theorem}
\newtheorem{Prop}[Def]{Proposition}
\newtheorem{Lem}[Def]{Lemma}
\newtheorem{Cor}[Def]{Corollary}
\newtheorem{Rmk}[Def]{Remark}
\newtheorem{Exp}[Def]{Example}
\title{{\bf Differential forms on the curves associated to Appell-Lauricella hypergeometric series and the Cartier operator on them}}
\author{Ryo Ohashi\thanks{Graduate School of Environment and Information Sciences, Yokohama National University.
79-7 Tokiwadai, Hodogaya-ku, Yokohama 240-8501 Japan.
E-mail: \texttt{ohashi-ryo-hg@ynu.jp}}\; and Shushi Harashita\thanks{Graduate School of Environment and Information Sciences, Yokohama National University. 79-7 Tokiwadai, Hodogaya-ku, Yokohama 240-8501 Japan.
E-mail: \texttt{harasita@ynu.ac.jp}}}
\begin{document}
\maketitle
\begin{abstract}
Archinard studied the curve $C$ over $\mathbb{C}$ associated to an Appell-Lauricella hypergeometric series and differential forms on its desingularization. In this paper, firstly as a generalization of Archinard's results, we describe a partial desingularization of $C$ over a field $K$ under a mild condition on its characteristic and the space of global sections of its dualizing sheaf, especially we give an explicit basis of it. Secondly, when the characteristic is positive, we show that the Cartier operator on the space can be defined and describe it in terms of Appell-Lauricella hypergeometric series.
\medskip

{\bf Keywords:} algebraic curve, differential form, hypergeometric series, positive characteristic, Cartier operator, Cartier-Manin matrix
\medskip

{\bf MSC Classification:} 14H10, 14F40, 14G17, 33C70
\end{abstract}

\section{Introduction}
An Appell-Lauricella hypergeometric series is defined to be a period of a family of degenerations of superelliptic curves,
%as power series in several variables where the variables are parameters of it,
see Section 2 for the details. We are interested in relations between the geometry of the associated family of the (possibly singular) curves and the analysis of Appell-Lauricella hypergeometric series. Among them, we shall describe Cartier-Manin matrices of the curves in terms of Appell-Lauricella hypergeometric series (Theorem \ref{MainTheoremGeneralA_k}). For this, we need to find nice explicit basis of the space of regular differential forms on the curves.

Let us start with recalling the most classical case:  a relation between elliptic curves and Gauss' hypergeometric series.
Gauss' hypergeometric series is defined to be
\begin{equation}
F(a,b,c\,;z) := \sum_{n=0}^{\infty} \frac{(a\,;n)(b\,;n)}{(c\,;n)(1\,;n)}z^n,
\end{equation}
where $(x\,;n) := \prod_{j=1}^n (x+n-j)$ and $a,b,c \in \mathbb{C},-c \notin \mathbb{N}$. It is well-known that $F(a,b,c\,;z)$ satisfies the differential equation $\mathcal{D}F(a,b,c\,;z) = 0$ with
\begin{equation}
	\mathcal{D} = z(1-z)\frac{d^2}{dz^2} + \bigl(c-(a+b+1)z\bigr)\frac{d}{dz} - ab.
\end{equation}
According to Deuring \cite[\S 7]{Deuring}, the elliptic curve $E: y^2=x(x-1)(x-z)$ in characteristic $p>0$ are supersingular if and only if $H_p(z) = 0$, where
\[
	H_p(z) := \sum_{i=0}^{m} \binom{m}{i}^{\!\!2} z^i
\]
with $m = (p-1)/2$, also see \cite[Theorem\,V.4.1]{Silverman}.
\cite{Igusa}, Igusa proved that $H_p(z)$ is a separable polynomial, by using the fact
that $H_p(z)$ satisfies the differential equation ${\mathcal{D}}'H_p(z) = 0$ with
\begin{equation}
	{\mathcal{D}}' = z(1-z)\frac{d^2}{dz^2} + (1-2z)\frac{d}{dz} - \frac{1}{4}.
\end{equation}
Note that (1.3) is (1.2) for $(a,b,c)=(1/2,1/2,1)$, and $H_p(z)$ is obtained by truncating the series $F(1/2,1/2,1\,;z)$ by degree $(p-1)/2$.
Also over $\mathbb{C}$, 
periods of the elliptic curve $y^2=x(x-1)(x-\lambda)$
are known to be described in terms of the hypergeometric series $F(1/2,1/2,1\,;\lambda)$
(cf. \cite[Chap.\;9, (6.1)]{Husemoeller}).
Some variants of hypergeometric series
have been used sporadically in some cases of higher genera,
for example see \cite[1.4]{IKO}, \cite[Section 2]{Brock},
\cite{Varchenko} and \cite[Section 3]{OKH}.

In this paper, we study the curve $C$ associated to an Appell-Lauricella hypergeometric series (see Definition \ref{Def:AL-hypergeometric series}), which is a certain generalization of Gauss' hypergeometric series. 
%Let $K$ be a perfect field, then
The curve $C$ is defined by the affine equation
\begin{equation}\label{eq:AL-curves}
	C: y^N = f(x):=\prod_{i=0}^r (x-\lambda_i)^{A_i}, \quad i \neq j \,\Rightarrow \lambda_i \neq \lambda_j, \quad \lambda_0, \ldots, \lambda_r \in \sbar{K}.
\end{equation}
The central purpose of this paper is to generalize the above result for elliptic curves to that for $C$ and partial desingularizations of $C$ under the assumption that $C$ is irreducible (Lemma \ref{lem:irreducibility}).

Archinard \cite[Section 2]{Archinard} gave the desingularization $X$ of $C$, especially for $K=\mathbb{C}$ and studied the space of regular differential forms on $X$. 
%For later discussions, 
For our purpose, we first generalize the result to the case of a field $K$ whose characteristic is not a divisor of $N$.
%under a mild assumption on its characteristic. 
In Section 2, we review basic properties of the curve $C$ associated to an Appell-Lauricella hypergeometric series, and construct the explicit desingularization of $C$. In Section 3, we describe the space of regular differential forms on it (Theorem \ref{TheoremSection3}).

As well as nonsingular curves, we often need to study singular curves. We shall deal with $C$ itself in Section 4 and the partial desingularization only at $\infty$ in Section 5. In particular, the latter objects (written as $\widetilde{C}$) appear when we consider degenerations of hyperelliptic curves or superelliptic curves. We shall give an explicit basis of the regular differential module on $C$ and $\widetilde{C}$ (Corollaries \ref{CorollarySection4} and \ref{cor:basis_for_tilde_C}), where the notion of the regular differential forms of singular curves is defined by Serre \cite[Section\,IV.3]{Serre}, see Section 4 for the details.
For separable $f(x)$, i.e., the case of $A_0=\cdots=A_r=1$ in \eqref{eq:AL-curves}, see Gonz\'{a}lez \cite[Proposition 3.1]{Gonz} for prime $N$ and Sutherland \cite[Lemma 6]{Sutherland} for general $N$.

As an application, in Section 6 we introduce the modified Cartier operator on the regular differential modules on $X$, $C$ and $\widetilde{C}$ (Theorem \ref{CartierOperatorOnC}). The last aim of this paper is to describe an relation between Appell-Lauricella hypergeometric series and the modified Cartier operator (Theorem \ref{MainTheoremGeneralA_k}). This result is a generalization of the fact that the coefficient of $x^{p-1}$ in the polynomial $\{x(x-1)(x-z)\}^{(p-1)/2}$ is equal to a truncation of $(-1)^{(p-1)/2}F(1/2,1/2,1\,;z)$.
The research of Cartier-Manin matrices (or their dual notion: Hasse-Witt matices)
have long history. Among them,
Sutherland gave a fast algorithm computing
Cartier-Manin matices of superelliptic curves \cite{Sutherland},
also see Harvey-Sutherland \cite{HS} for hyperelliptic curves.
This paper gives a formula of Cartier-Manin matrices
whose entries are considered as polynomials in $\lambda_i$ of \eqref{eq:AL-curves}.
This result may not contribute to speeding up the computation
if $\lambda_i$ are constants, but would have many applications
if $\lambda_i$ are indeterminates.
Indeed, Cartier-Manin matrices with polynomial entries
are used in the papers \cite{KH} and \cite{KHS} and so on,
for enumeratations of superspecial curves and for a proof of the existence of
supersingular curves.

This paper is organized as follows. In Section 2, we study the fundamentals of curves associated to Appell-Lauricella hypergeometric series and give their explicit desingularizations. In Section 3, we describe the space of regular differential forms on them. In Section 4, we describe the space of regular differential forms on $C$. In Section 5, we describe the space of regular differential forms on $\widetilde{C}$. Finally in Section 6, we show that the modified Cartier-Manin operator stabilizes the space of regular differential forms on $\widetilde{C}$  and so on, and describe the relation between the Cartier operator and Appell-Lauricella hypergeometric series.

\subsection*{Acknowledgements}
The authors thank Yuya Yamamoto who pointed out some
mistakes in an earlier version.
This work was supported by JSPS Grant-in-Aid for Scientific Research (C) 17K05196 and 21K03159.
\newcommand{\cP}{{\mathcal P}}
\newcommand{\cPfin}{{\cP}_{\rm fin}}
\newcommand{\cPinf}{{\cP}_{\infty}}
\section{The curves associated to Appell-Lauricella hypergeometric series}
\setcounter{equation}{0}
In this section, we recall the definition of the curves associated to Appell-Lauricella hypergeometric series and properties of them. Let $K$ be a field.
\begin{Def}\label{Def:AL-curve}
Let $N$ be a positive integer which is not a multiple of the characteristic of $K$. A {\it curve associated to Appell-Lauricella hypergeometric series} is the 1-dimensional algebraic set defined by
\begin{equation*}%\label{eq:AL-curve}
	C: y^N = f(x)
\end{equation*}
for an $f(x) \in K[x]$, which is possibly inseparable: $f(x)$ is factorized as
\[
	 f(x) = \prod_{i=0}^r (x-\lambda_i)^{A_i}, \quad \lambda_0,\ldots, \lambda_r \in \mbar{K},
\]
where $A_i \geq 1$ and $\lambda_i \neq \lambda_j$ for $i \neq j$.
\end{Def}
\begin{Rmk}
The curve $C$ above (or more precisely its desingularization) is called {\it superelliptic} if $A_i = 1$ for all $i \in \{0,\ldots,r\}$.
If $N = 2$ and $r>3$ in addition, the curve $C$ is called {\it hyperelliptic}. A curve associated to Appell-Lauricella hypergeometric series is a certain generalization of these curves.
\end{Rmk}
\begin{Def}\label{Def:AL-hypergeometric series}
Appell-Lauricella hypergeometric series is defined to be
\begin{equation}
	{\mathcal F}(a,b_1,\ldots,b_d,c\,;z_1,\ldots,z_d) := \sum_{n_1=0}^{\infty} \cdots \sum_{n_d = 0}^{\infty} \frac{(a\,;\sum{n_j})\prod(b_j\,;n_j)}{(c\,;\sum{n_j})\prod(1\,;n_j)}\prod_{j=1}^d z_j^{n_j},
\end{equation}
where $a,b_1,\ldots,b_d,c\in\mathbb{C},-c \notin \mathbb{N}$.
\end{Def}

It is obvious that ${\mathcal F}(a,b,c\,;z) = F(a,b,c\,;z)$ when $d = 1$, hence Appell-Lauricella hypergeometric series ${\mathcal F}(a,b_1,\ldots,b_d,c\,;z_1,\ldots,z_d)$ can be regarded as a certain generalization of Gauss' hypergeometric series $F(a,b,c\,;z)$. If $0<\mathfrak{R}a<\mathfrak{R}c$, then it is known that ${\mathcal F}(a,b_2,\ldots,b_r,c\,;\lambda_2,\ldots,\lambda_r)$ has the integral representation as below:
\begin{equation}
	{\mathcal F}(a,b_2,\ldots,b_r,c\,;\lambda_2,\ldots,\lambda_r) = \frac{\Gamma(c)}{\Gamma(a)\Gamma(c-a)}\int_1^{\infty} \prod_{i=0}^r (x-\lambda_i)^{-\mu_i}dx
\end{equation}
with $\lambda_0=0$ and $\lambda_1=1$, where $\mu_0=c-\sum_{j=2}^r{b_j}$,
$\mu_1=1+a-c$ and $\mu_j=b_j$ for $j=2,\ldots,r$.

In the case that all $\mu_i$ are positive rational numbers, by setting the integrand of (2.2) as $1/y$, we see that
the hypergeometric function is associated to the curve
\[
y^N = \prod_{i=0}^r (x-\lambda_i)^{A_i},
\]
where we set $N$ to be the least common multiple of the denominators of $\mu_0,\ldots,\mu_r$
and set $A_i = N\mu_i$ so that $(N, A_0,\ldots,A_r) = 1$ holds.

By \cite[Chap.\;VI, Theorem 9.1]{Lang}, we have a condition for $C$ in Definition \ref{Def:AL-curve} to be irreducible:
\begin{Lem}\label{lem:irreducibility}
$C$ is irreducible over $K$ (and over $\overline K$) if and only if $(N, A_0,\ldots,A_r) = 1$.
\end{Lem}
\if0
\begin{proof}
Firstly, we show the ``only if"-part. Suppose that $\delta := (N, A_0,\ldots,A_r) > 1$. Put $g(x)=\prod_{i=0}^r (x-\lambda_i)^{A_i/\delta} \in \mbar{K}[x]$, which satisfies $f(x) = g(x)^\delta$. Then $y^N - f(x)$ is reducible, since it has the factor $y^{N/\delta}-g(x)$. Moreover this factorization is over $K$. Indeed we claim $g(x) \in K[x]$. If necessary, we divide both sides of $f(x)=g(x)^\delta$ by a power of $x$ so that we may assume $f(x) = \sum a_ix^i$ and $g(x) = \sum b_ix^i$ with $a_0,b_0 \neq 0$. Since $t^\delta - a_0 \in K[t]$ is a separable polynomial, 
%the minimal polynomial of $b_0$ does not have any multiple roots. This means that 
$b_0$ belongs to the separable closure $K_{\rm sep}$ of $K$. Since $b_i$ belongs to
$K[b_0,\ldots, b_{i-1}]$ by comparing coefficients of $x^i$, we inductively obtain $b_i \in K_{\rm sep}$ for every $i$. For any $\sigma \in {\rm Gal}(K_{\rm sep}/K)$, let $f^\sigma(x)$ and $g^\sigma(x)$ be polynomials obtained from $f(x)$ and $g(x)$ by actions of $\sigma$ on its respective coefficients, namely set $f^\sigma(x) := \sum \sigma(a_i)x^i$ and $g^\sigma(x) := \sum \sigma(b_i)x^i$. Then we have $g(x)^\delta = f(x) = f^\sigma(x) = g^\sigma(x)^\delta$,
and therefore $g^\sigma(x) = \alpha g(x)$ for an $\alpha\in K_{\rm sep}$ with $\alpha^\delta = 1$.
%$f^{\sigma}(x) = {g^{\sigma}(x)}^d$ clearly, and $f(x) = f^{\sigma}(x)$ since $a_i \in K$ for all $i$. 
As the leading coefficients of $g(x)$ and $g^{\sigma}(x)$ are both 1, we have $g(x) = g^{\sigma}(x)$. This means $g(x) \in K[x]$.

Secondly, we show the ``if"-part. %Let $y^N = f(x)$ be satisfied.
Assume $(N, A_0,\ldots,A_r) = 1$.
Let $e$ be the smallest integer $> 0$ which satisfies $y^e \in K[x]$.
By the Kummer theory (Proposition \ref{prop:A2}), $e$ is a divisor of $N$ and we have $e=[K(x,y):K(x)]$.
% according to Proposition A.2. 
Hence there exists a positive integer $\delta > 0$ such that $N = \delta e$, and we get $g(x)^\delta = f(x)$ with $g(x):=y^e$.
%Then we get $g(x) = h(x)^e \in K[x]$ by uniqueness of factorization.
It follows from the uniqueness of factorization that $(A_0,\ldots,A_r)$ is a multiple of $\delta$, since $g(x)^\delta = \prod (x-\lambda_i)^{A_i}$. This shows $\delta = 1$ by the assumption and therefore $e=N$. Hence $y^N - f(x)$ is an irreducible polynomial with $\mbar{K}$-coefficients. The proof is completed.
\end{proof}
\fi

From now on, we suppose that $(N, A_0,\ldots,A_r) = 1$, since we are interested in the case that $C$ is irreducible. Now we regard $C$ as a projective variety in ${\mathbb P}^2 = \Proj K[x_0,x_1,x_2]$. Set $A_{\infty} := \bigl|N - \sum_{k=0}^r{A_k}\bigr|$. The projective equation of $C$ reads
\begin{itemize}
\item {\sl Case 1:} $N - \sum_{k=0}^r{A_k} > 0$. \quad ${x_2}^N = {x_0}^{A_{\sinfty}}\prod(x_1-\lambda_ix_0)^{A_i}$;
\item {\sl Case 2:} $N - \sum_{k=0}^r{A_k} < 0$. \quad ${x_2}^N{x_0}^{A_{\sinfty}} = \prod(x_1-\lambda_ix_0)^{A_i}$;
\item {\sl Case 3:} $N - \sum_{k=0}^r{A_k} = 0$. \quad ${x_2}^N = \prod(x_1-\lambda_ix_0)^{A_i}$.
\end{itemize}
Put $P_j = (1:\lambda_j:0)$ for $j \in \{0,\ldots,r\}$ and set ${\mathcal P}_{\rm fin}:=\{P_0, \ldots, P_r\}$. 
Put $P_\infty=(0:1:0)$ in {\sl Case 1} and $P_\infty=(0:0:1)$ in {\sl Case 2}.
We set $\cPinf:=\{P_\infty\}$ for {\sl Cases 1} and {\sl 2}
and $\cPinf:=\emptyset$ for {\sl Case 3}.
% and $P_\infty^e=(0:1:\zeta^e)$ for $e = 0, \ldots, N-1)$ in {\sl Case 3}, where $\zeta$ is a primitive $N$-th root of unity.
Set 
\[
\mathcal P = {\mathcal P}_{\rm fin} \cup {\mathcal P}_\infty.
\]
%It will be denoted by $P_{\infty}$ when we do not wish to specify the case.
By the Jacobian criterion and the assumption that the characteristic of $K$ is not a divisor of $N$, it is straightforward to see where $C$ has singularities.
\begin{Lem}
Any singular point of $C$ belongs to ${\mathcal P}$. Moreover $C$ is singular at $P_i$ for each $i \in \{0,\ldots,r,\infty\}$ if and only if $A_i > 1$.
\end{Lem}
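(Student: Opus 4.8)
The plan is to apply the Jacobian criterion on the standard charts of $\mathbb{P}^2$, separating the analysis into the affine part (the chart $x_0 = 1$, where $C$ reads $y^N = f(x)$ with $f(x) = \prod_{i=0}^r (x-\lambda_i)^{A_i}$) and the points at infinity $\{x_0 = 0\}$. The key first observation is that, writing $G(x,y) = y^N - f(x)$, one has $\partial G/\partial y = Ny^{N-1}$; since $N$ is coprime to $p = \operatorname{char}K$, the scalar $N$ is a unit in $K$, so this partial vanishes exactly on $\{y = 0\}$ (for $N \geq 2$; the degenerate case $N=1$ yields the smooth rational curve $y=f(x)$ and is set aside). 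On $\{y = 0\}$ the equation forces $f(x) = 0$, i.e. $x = \lambda_j$ for some $j$, so every affine singular point already lies in ${\mathcal P}_{\rm fin} = \{P_0,\ldots,P_r\}$.

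Next I would decide which of these finite points is genuinely singular by evaluating $f'$. First I record $\partial G/\partial x = -f'(x)$ and expand $f'(x) = \sum_i A_i (x-\lambda_i)^{A_i-1}\prod_{k\neq i}(x-\lambda_k)^{A_k}$: every term with $i \neq j$ carries the factor $(x-\lambda_j)^{A_j}$ and hence vanishes at $x = \lambda_j$, while the $i = j$ term equals $A_j(x-\lambda_j)^{A_j-1}\prod_{k\neq j}(x-\lambda_k)^{A_k}$. When $A_j = 1$ this reduces to $\prod_{k\neq j}(\lambda_j-\lambda_k)^{A_k} \neq 0$ (using $\lambda_j \neq \lambda_k$), so $P_j$ is smooth; when $A_j > 1$ the factor $(x-\lambda_j)^{A_j-1}$ annihilates it, so $f'(\lambda_j)=0$ and, together with $\partial G/\partial y = 0$, the point $P_j$ is singular. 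I would stress that this dichotomy does not require $A_j$ to be invertible modulo $p$: in the singular case it is the vanishing order of the monomial factor, not the scalar $A_j$, that does the work.

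It remains to treat the points at infinity, and here I would run the three forms of the projective equation separately, which I expect to be the main (though still routine) obstacle, since the surviving partial derivative differs from case to case. In {\sl Case 3} the equation is $x_2^N = \prod(x_1-\lambda_i x_0)^{A_i}$; setting $x_0 = 0$ gives $x_2^N = x_1^N$, i.e. the $N$ distinct points $(0:1:\zeta)$ with $\zeta^N = 1$, and at each $\partial F/\partial x_2 = N x_2^{N-1} \neq 0$, so all are smooth, matching $\cPinf = \emptyset$. In {\sl Cases 1} and {\sl 2} the factor $x_0^{A_\infty}$ (with $A_\infty > 0$) forces the unique infinite point to be $P_\infty$. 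Evaluating the three homogeneous partials there, those with respect to $x_1$ and $x_2$ vanish (either because they retain the factor $x_0^{A_\infty}$, or because they are homogeneous of positive degree in the variables vanishing at the point), while the partial with respect to $x_0$ reduces to the single term $\pm A_\infty x_0^{A_\infty-1}\prod(\ldots)$; this is a nonzero constant when $A_\infty = 1$ (so $P_\infty$ is smooth) and vanishes when $A_\infty > 1$ (so $P_\infty$ is singular), again by the order of $x_0^{A_\infty-1}$ rather than by the scalar. Collecting the three paragraphs yields both assertions at once: every singular point lies in $\mathcal P = {\mathcal P}_{\rm fin} \cup \cPinf$, and $C$ is singular at $P_i$ precisely when $A_i > 1$.
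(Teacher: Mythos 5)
Your proof is correct and takes exactly the route the paper intends: the paper gives no written proof of this lemma, merely asserting that the result is straightforward ``by the Jacobian criterion and the assumption that $N$ and $p$ are coprime,'' and your computation (the affine chart via $\partial G/\partial y = Ny^{N-1}$ and $f'(\lambda_j)$, plus the case-by-case analysis of the partials at infinity) is precisely that omitted verification. Your explicit observations that the dichotomy rests on the vanishing order of $(x-\lambda_j)^{A_j-1}$ and $x_0^{A_\infty-1}$ rather than on invertibility of the scalars $A_j$, $A_\infty$ in $K$, and your flagging of the degenerate case $N=1$, are details the paper leaves implicit.
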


Next we review the explicit description of the desingularization $X$ of $C$
obtained by Archinard \cite[3.1]{Archinard}, which works also in positive characteristic. Let $g_i := (N,A_i)$ for each $i \in \{0,\ldots,r, \infty\}$, hence ${N}_{i} = N/{g_i}$ and ${A'}_{\!i} = {A_i}/{g_i}$ are coprime non-negative integers. Thus, there exist $m_i,n_i \in \mathbb{Z}$ such that ${m_i}{A'}_{\!i} + {n_i}{N}_{i} = 1$.
\begin{Prop}[Archinard {\cite[3.1.1]{Archinard}}]\label{DesingFinitePlace}
Let $j \in \{0,\ldots,r\}$ and put $f_j(x) := \prod_{i \neq j} (x-\lambda_i)^{A_i}$. Let $D(f_j)$ be the open subscheme obtained by excluding the part of $f_j(x) = 0$ from $\mathbb{A}^3 = \Spec K[x,u,z]$ and 
set $X_j$ to be the closed subscheme of $D(f_j)$ defined by
\[
	X_j: z^{{N}_{j}} = (x-\lambda_j)u^{m_j},\ u^{g_j} = f_j(x).
\]
Then $X_j$ is nonsingular with (birational) morphism
\[
	\pi_j: X_j \rightarrow 
C \sminus ({\mathcal P} \sminus \{P_j\})
%C \sminus\{P_0,\,\overset{\overset{j}{\vee}}{\ldots}\,,P_r,P_{\infty}\}
\,;\, (x,u,z) \mapsto (1:x:u^{n_j}z^{{A'}_{\!j}}).
\]
Moreover $\pi_j$ induces an isomorphism $X_j\sminus\pi^{-1}(\{P_j\}) \overset{\!\cong}{\longrightarrow} C\sminus {\mathcal P}$, whose inverse is
\[
% \rho_j: C\sminus\{P_0,\,\overset{\overset{j}{\vee}}{\ldots}\,,P_r,P_{\infty}\} \rightarrow X_j
\rho_j: C\sminus {\mathcal P} \to X_j\sminus\pi^{-1}(\{P_j\})
\,;\,(1:x:y) \mapsto (x,y^{{N}_{j}}(x-\lambda_j)^{-{A'}_{\!j}},y^{m_j}(x-\lambda_j)^{n_j}).
\]
\end{Prop}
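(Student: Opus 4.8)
The plan is to verify directly that $\pi_j$ and $\rho_j$ are mutually inverse morphisms between $X_j\sminus\pi^{-1}(\{P_j\})$ and $C\sminus{\mathcal P}$, and then to deduce nonsingularity of $X_j$ by combining this isomorphism with a purely local computation at the finitely many points lying over $P_j$. The only arithmetic inputs are the Bézout relation $m_jA'_j+n_jN_j=1$ together with $g_jN_j=N$ and $g_jA'_j=A_j$. I first record that on $X_j$ the coordinate $u$ is a unit, since $u^{g_j}=f_j(x)$ is invertible on $D(f_j)$; this is what makes $u^{n_j}z^{A'_j}$ (with possibly negative $n_j$) a regular function, and it lets me take $0\le m_j<N_j$ so that the two defining equations are genuine polynomials.

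First I would check that $\pi_j$ maps into $C$. Writing $y=u^{n_j}z^{A'_j}$ and using $z^{N_j}=(x-\lambda_j)u^{m_j}$, one computes
\[
 y^N=u^{n_jN}z^{A'_jN}=u^{n_jN}\bigl((x-\lambda_j)u^{m_j}\bigr)^{A_j}=(x-\lambda_j)^{A_j}u^{n_jN+m_jA_j}=(x-\lambda_j)^{A_j}u^{g_j}=f(x),
\]
where the exponent collapses via $n_jN+m_jA_j=g_j(n_jN_j+m_jA'_j)=g_j$ and then $u^{g_j}=f_j(x)$ gives $(x-\lambda_j)^{A_j}f_j(x)=f(x)$. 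Since every point of $X_j$ has $f_j(x)\neq0$, it satisfies $x\neq\lambda_i$ for $i\neq j$ and its image lies in the chart $x_0\neq0$; hence $\pi_j$ avoids $P_i$ ($i\neq j$) and $P_\infty$, landing in $C\sminus({\mathcal P}\sminus\{P_j\})$. A symmetric substitution shows $\rho_j$ lands in $X_j$: for $(1:x:y)\in C\sminus{\mathcal P}$ one has $x-\lambda_j\neq0$, $y\neq0$, and putting $u=y^{N_j}(x-\lambda_j)^{-A'_j}$, $z=y^{m_j}(x-\lambda_j)^{n_j}$ reduces the two defining equations to identities by the same relation and $y^N=f(x)$. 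Finally the two compositions are the identity by identical bookkeeping; for instance $u^{n_j}z^{A'_j}$ collapses to $y$ precisely because $N_jn_j+m_jA'_j=1$. Thus $\pi_j$ restricts to an isomorphism onto $C\sminus{\mathcal P}$ with inverse $\rho_j$, and in particular $\pi_j$ is birational.

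It remains to prove $X_j$ is nonsingular. By the above, $X_j\sminus\pi^{-1}(\{P_j\})\cong C\sminus{\mathcal P}$, which is smooth because, by the preceding Lemma, every singular point of $C$ lies in ${\mathcal P}$; so only the points of $\pi^{-1}(\{P_j\})$ remain. Such a point $P$ has $x=\lambda_j$, hence $z^{N_j}=(x-\lambda_j)u^{m_j}=0$ forces $z=0$, while $u=u_0$ with $u_0^{g_j}=f_j(\lambda_j)\neq0$. I would show the maximal ideal of $\mathcal O_{X_j,P}$ is principal, generated by $z$. Indeed $x-\lambda_j=z^{N_j}u^{-m_j}\in(z)$, and then $u^{g_j}-u_0^{g_j}=f_j(x)-f_j(\lambda_j)\in(x-\lambda_j)\subseteq(z)$; factoring $u^{g_j}-u_0^{g_j}=(u-u_0)\,h(u)$ with $h(u_0)=g_ju_0^{g_j-1}\neq0$ (here $g_j\mid N$ is invertible in $K$) shows $h$ is a unit near $P$, whence $u-u_0\in(z)$. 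Therefore the one-dimensional local ring $\mathcal O_{X_j,P}$ has principal maximal ideal $(z)$, so it is a discrete valuation ring and $X_j$ is regular, hence smooth since $K$ is perfect, at $P$. (Equivalently, the Jacobian of the two equations has rank $2$ at $P$, using $u_0\neq0$ and $N_j,g_j\neq0$ in $K$.)

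The main obstacle is exactly this local analysis at $\pi^{-1}(\{P_j\})$: it is the locus where the two ``branches'' of the construction ($u$ following $f_j$ and $z$ following $x-\lambda_j$) meet, and where $C$ itself is singular when $A_j>1$. The point is that after passing to $X_j$ the single function $z$ cuts out this locus transversally, and the coprimality of $N$ with the characteristic, which forces $N_j$ and $g_j$ to be invertible, is precisely what makes the $g_j$-th power factorization have a unit cofactor. Irreducibility of $X_j$ is then not a separate issue: cut out in $D(f_j)$ by two polynomials, $X_j$ is of pure dimension one and contains the irreducible dense open subset isomorphic to $C\sminus{\mathcal P}$, so it is irreducible.
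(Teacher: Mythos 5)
Your proof is correct, but there is nothing in the paper to compare it against line by line: the paper states this proposition as a quotation of Archinard \cite[3.1.1]{Archinard}, with only the comment that the construction ``works also in positive characteristic,'' and gives no proof. Your argument therefore supplies the verification the paper delegates, and it does so along the natural lines: a formal check that $\pi_j$ and $\rho_j$ are mutually inverse away from $P_j$ (everything collapsing via $m_jA'_j+n_jN_j=1$, $g_jN_j=N$, $g_jA'_j=A_j$ and $y^N=f(x)$), plus a local analysis at the $g_j$ points $(\lambda_j,u_0,0)$ of $\pi_j^{-1}(\{P_j\})$, where the characteristic hypothesis enters exactly as it should: $g_j\mid N$ makes $g_j$ invertible in $K$, so $h(u_0)=g_ju_0^{g_j-1}\neq 0$, giving $u-u_0\in(z)$ and hence a principal maximal ideal; combined with the Krull height bound $\dim\mathcal{O}_{X_j,P}\geq 3-2=1$ this yields a regular (DVR) local ring, and the parenthetical Jacobian computation is an equally valid shortcut. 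Two points worth tightening. First, your normalization $0\le m_j<N_j$ (needed so that the defining equations are honest polynomials on $D(f_j)$, where $u$ is \emph{not} yet invertible) should be justified by noting that replacing $(m_j,n_j)$ by $(m_j+kN_j,\,n_j-kA'_j)$ produces an isomorphic $X_j$; this is precisely the paper's Remark 2.8, which however appears \emph{after} the proposition, so in a self-contained write-up you should include the one-line isomorphism $(x,u,z)\mapsto(x,u,u^kz)$ yourself. Second, your final claim that $X_j$ has ``pure dimension one'' is not an independent input but a consequence of what you already proved: Krull gives every component dimension $\geq 1$, so no component lies in the finite set $\pi_j^{-1}(\{P_j\})$, hence every component meets the irreducible one-dimensional open set $X_j\smallsetminus\pi_j^{-1}(\{P_j\})\cong C\smallsetminus\mathcal{P}$; stating it in that order removes any appearance of circularity in the irreducibility argument. (Also, your local computation takes $u_0\in\mbar{K}$, i.e.\ checks regularity of the base change to $\mbar{K}$; since $K$ is perfect this is equivalent to regularity of $X_j$ itself, and it is worth saying so.)
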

\begin{Prop}[Archinard {\cite[3.1.2]{Archinard}}]\label{DesingInfinitePlace}
Put $f_{\infty}(\xi) := \prod_{i=0}^r (1-\lambda_i\xi)^{A_i}$. We define $X_{\infty}$ and $\pi_{\infty}$ in each case as follows. Then $X_{\infty}$ is nonsingular, and birationally equivalent to $C$ under a rational map $\pi_{\infty}$.
\begin{itemize}
\item {\sl Case 1:} $N - \sum{A_k} > 0$. \quad  In this case, $\x := {x_0}/{x_1}$ and $\y := {x_2}/{x_1}$ are regular on $\infty$. Then we write $D(f_\infty(\x))$ be the open subscheme obtained by excluding the part of $f_\infty(\x) = 0$ from $\mathbb{A}^3 = \Spec K[\x,u,z]$ and set $X_{\infty}$ to be the closed subscheme of $D(f_\infty(\x))$ defined by
\[
	X_\infty : z^{{N}_{\sinfty}} = \x u^{m_{\sinfty}},\ u^{g_{\sinfty}} = f_{\infty}(\x).
\]
Then $X_{\infty}$ is nonsingular with (birational) morphism
\[
	\pi_{\infty}: X_{\infty} \rightarrow C\sminus {\mathcal P}_{\rm fin}\,;\,(\x,u,z) \mapsto (\x : 1 : u^{n_{\sinfty}}z^{{A'}_{\!\sinfty}}).
\]
which induces an isomorphism
$X_{\infty}\sminus\pi^{-1}(\cPinf) \overset{\cong}{\longrightarrow} C\sminus {\mathcal P}$, whose inverse is
\[
	\rho_{\infty} : C\sminus {\mathcal P} \rightarrow X_{\infty}\sminus\pi^{-1}(\cPinf)\,;\,(\x : 1 : \y) \mapsto (\x, \x^{-{A'}_{\!\sinfty}}\y^{{N}_{\sinfty}}, \x^{n_{\sinfty}}\y^{m_{\sinfty}}).
\]

\item {\sl Case 2:} $N - \sum{A_k} < 0$. \quad In this case, $\x := {x_0}/{x_2}$ and $\y := {x_1}/{x_2}$ are regular on $\infty$. Then we write $D(f_\infty(u))$ be the open subscheme obtained by excluding the part of $f_\infty(u) = 0$ from $\mathbb{A}^4 = \Spec K[u,v,w,z]$ and set $X_{\infty}$ to be the closed subscheme of $D(f_\infty(u))$ defined by
\[
	X_{\infty} : 
%u^{{A'}_{\!\sinfty}} = v^{{N}_{\sinfty}}w,
u=w^{m_{\!\sinfty}}z^{N_{\sinfty}},
\ z^{{A'}_{\!\sinfty}} = vw^{n_{\sinfty}},
\ w^{g_{\sinfty}} = f_{\infty}(u).
\]
Then $X_{\infty}$ is nonsingular with birational morphism
\[
	\pi_{\infty}: X_{\infty} \rightarrow C\sminus\cPfin\,;\,(u,v,w,z) \mapsto (vw^{m_{\sinfty}}z^{{N}_{\sinfty}}:v:1),
\]
which induces an isomorphism $X_{\infty}\sminus\pi^{-1}(\cPinf) \overset{\cong}{\longrightarrow} 
C\sminus \cP$, whose inverse is
\[
	\rho_{\infty} : C\sminus\cP \rightarrow X_{\infty}\sminus\pi^{-1}(\cPinf)\,;\,(\x : \y : 1) \mapsto (\x \y^{-1}, \y, \x^{{A'}_{\!\sinfty}}\y^{-{N}_{\sinfty}-{A'}_{\!\sinfty}}, \x^{n_{\sinfty}}\y^{m_{\sinfty}-n_{\sinfty}}).
\]
\end{itemize}
Note that in {\sl Case 3:} $N-\sum\hspace{-0.25mm}A_i = 0$,
%In this case, $P_{\infty}$ 
the points at infinity $(0:1:\zeta)$ with $\zeta^N=1$ are nonsingular by Lemma 2.5. Hence we do not need to consider $X_\infty$ as
Archinard excluded {\sl Case 3}.
%(Archinard excluded {\sl Case 3} by her hypothesis.)
%so we describe the partial desingularizaton  $X_{\infty}$.) 
%In this case $P_{\infty}$ are nonsingular points by Lemma 2.5, then it suffices% to let $X_{\infty}$ be the affine open set of $C$ satisfying $x_1 \neq 0$ and $\pi_{\infty}$ be a natural bijection\vspace{-1mm}
%\[
%	X_{\infty} = \{(z,y) \in \mathbb{A}^2\,;\,y^N = f_{\infty}(z)\}, \qua%d \pi_{\infty}: X_{\infty} \rightarrow C\!\smallsetminus\!\cPfin\,;\,(z,y) \map%sto (z:1:y).\vspace{-0.5mm}
%\]
\end{Prop}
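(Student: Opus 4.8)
The plan is to reduce the whole statement to the finite case already established in Proposition 2.6, by moving the point at infinity into a standard affine chart and recognizing the local equation of $C$ there as one of Appell--Lauricella type with a distinguished factor of exponent $A_\infty$. Case 3 needs no resolution at all: since $A_\infty = N - \sum A_k = 0$, the fibre over infinity is cut out of $x_2^N = \prod(x_1-\lambda_i x_0)^{A_i}$ by $x_0 = 0$, which collapses to $(x_2/x_1)^N = 1$, i.e. the $N$ distinct points $(0:1:\zeta)$ with $\zeta^N = 1$, each of which is nonsingular by Lemma 2.5. Hence only Cases 1 and 2 require argument.

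For Case 1 I would work in the chart $x_1 \neq 0$ with $x = x_0/x_1$, $y = x_2/x_1$, so that $P_\infty = (0:1:0)$ becomes the origin. Dividing the projective equation $x_2^N = x_0^{A_\infty}\prod(x_1-\lambda_i x_0)^{A_i}$ by $x_1^N$ (both sides are homogeneous of degree $N$, precisely because $A_\infty + \sum A_i = N$ here) yields the affine equation $y^N = x^{A_\infty} f_\infty(x)$ with $f_\infty(0) = 1$. This is exactly the local shape $y^N = (x-\lambda)^{A}\cdot(\text{unit})$ treated in Proposition 2.6, with $\lambda = 0$, $A = A_\infty$, and the role of $f_j$ played by $f_\infty$. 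Substituting these data into that proposition produces the model $z^{N_\infty} = x u^{m_\infty}$, $u^{g_\infty} = f_\infty(x)$ together with its nonsingularity; the only remaining bookkeeping is to rewrite the morphism and its inverse under the relabeling $(x_0:x_1:x_2) = (x:1:y)$, which gives the stated $\pi_\infty$ and $\rho_\infty$.

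Case 2 is where the real work lies. Here $A_\infty = \sum A_k - N > 0$, and I would use the chart $x_2 \neq 0$ with $x = x_0/x_2$, $y = x_1/x_2$, so that $P_\infty = (0:0:1)$ is the origin and the affine equation becomes $x^{A_\infty} = \prod(y-\lambda_i x)^{A_i}$. This singularity is not yet superelliptic, so I would perform the blow-up substitution $u = x/y$, $v = y$; a Newton-polygon estimate gives $\operatorname{ord} x : \operatorname{ord} y = \sum A_i : A_\infty$ along the branch through $P_\infty$, so indeed $u \to 0$ and $v \to 0$ there. Pulling out a factor $v^{\sum A_i}$ turns the equation into $u^{A_\infty} = v^N f_\infty(u)$ with $f_\infty(0)=1$, i.e. a standard superelliptic singularity of type $(N,A_\infty)$ at the origin. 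Resolving it by the same recipe as in the finite case, I extract $w$ with $w^{g_\infty} = f_\infty(u)$, so that $u^{A_\infty} = v^N w^{g_\infty}$ and, after taking $g_\infty$-th roots on the connected component, $u^{A'_\infty} = v^{N_\infty} w$; introducing the uniformizer $z = u^{n_\infty} v^{m_\infty}$, the B\'ezout relation $m_\infty A'_\infty + n_\infty N_\infty = 1$ then yields the third defining equation $z^{A'_\infty} = v w^{n_\infty}$. Re-expressing $u = x/y$, $v = y$ recovers the displayed $\rho_\infty$, and inverting gives $\pi_\infty$.

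It remains to verify nonsingularity of the four-variable model in Case 2 and the mutual inverseness of $\pi_\infty$ and $\rho_\infty$ on $X_\infty \smallsetminus \pi^{-1}(\mathcal P_\infty)$ and $C \smallsetminus \mathcal P$. The latter is a direct substitution using $m_\infty A'_\infty + n_\infty N_\infty = 1$ together with the three defining equations. The former --- the genuine obstacle --- I would settle with the Jacobian criterion applied to $u^{A'_\infty} = v^{N_\infty} w$, $z^{A'_\infty} = v w^{n_\infty}$, $w^{g_\infty} = f_\infty(u)$ in $\mathbb{A}^4$: away from the exceptional fibre, $\pi_\infty$ is an isomorphism onto $C \smallsetminus \mathcal P$ and smoothness is inherited, while over $P_\infty$ one uses that $g_\infty = (N,A_\infty)$ and that $f_\infty(0)$ is a unit to show $z$ (equivalently $w$) is a local parameter, so the fibre consists of $g_\infty$ smooth points. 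I expect the principal difficulty to be organizing this Jacobian/uniformizer computation cleanly at the points over $P_\infty$, since three of the four coordinates vanish there and one must identify which minor of the Jacobian stays invertible.
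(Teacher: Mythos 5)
The paper states this Proposition without proof (it is imported from Archinard \cite[3.1.2]{Archinard}), so your argument has to stand on its own merits. Your Case 3 and Case 1 are correct: in Case 1 the affine equation at infinity is $y^N=x^{A_\infty}f_\infty(x)$ with $f_\infty(0)=1$, which is exactly the local situation of Proposition 2.6 with $\lambda_j=0$, $A_j=A_\infty$, $f_j=f_\infty$, and the displayed $\pi_\infty,\rho_\infty$ are transcriptions of $\pi_j,\rho_j$. The Case 2 algebra is also correct as far as it goes: $u=x/y$, $v=y$ gives $u^{A_\infty}=v^Nf_\infty(u)$, and your uniformizer $z=u^{n_\infty}v^{m_\infty}$ does satisfy $z^{A'_\infty}=vw^{n_\infty}$.

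The gap sits precisely at the step you defer to a Jacobian computation: that computation cannot succeed, because the closed subscheme cut out by the three equations $u^{A'_\infty}=v^{N_\infty}w$, $z^{A'_\infty}=vw^{n_\infty}$, $w^{g_\infty}=f_\infty(u)$ is \emph{not} smooth over $P_\infty$ whenever $A'_\infty\ge 2$. Indeed, $w$ is a unit on $D(f_\infty)$, so the second equation gives $v=z^{A'_\infty}w^{-n_\infty}$; substituting into the first and using $m_\infty A'_\infty+n_\infty N_\infty=1$ leaves $u^{A'_\infty}=(z^{N_\infty}w^{m_\infty})^{A'_\infty}$. Hence the subscheme contains all $A'_\infty$ branches $u=\epsilon\, z^{N_\infty}w^{m_\infty}$ with $\epsilon^{A'_\infty}=1$, and they all pass through each of the $g_\infty$ points $(u,v,w,z)=(0,0,\zeta,0)$, $\zeta^{g_\infty}=1$, lying over $P_\infty$; correspondingly every $3\times3$ minor of the Jacobian vanishes there (at such a point all partials of $u^{A'_\infty}-v^{N_\infty}w$ vanish except the $v$-partial when $N_\infty=1$, and in that case the first two rows become proportional). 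The paper's own Example 4.7, $y^3=x(x-1)^2(x-\lambda)^2$ (writing $\lambda$ for the parameter called $z$ there), already exhibits this: here $N=3$, $A_\infty=2$, $g_\infty=1$, $A'_\infty=2$, and elimination gives $u=\pm z^3w^{m_\infty}$, two smooth branches crossing at the unique point over $P_\infty$. The spurious branches are not even mapped into $C$ by $\pi_\infty$, since that would force $f_\infty(\epsilon^{-1}u)=\epsilon^{-A_\infty}f_\infty(u)$. So your proof (and, read literally, the statement itself) must be completed by adjoining the branch-selecting relation $u=z^{N_\infty}w^{m_\infty}$ (equivalently your $z=u^{n_\infty}v^{m_\infty}$; written polynomially using that $w$ is invertible), or by defining $X_\infty$ as the irreducible component this relation selects. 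Note that this relation together with $z^{A'_\infty}=vw^{n_\infty}$ and $w^{g_\infty}=f_\infty(u)$ \emph{implies} $u^{A'_\infty}=v^{N_\infty}w$, and the resulting scheme is the graph of $(w,z)\mapsto(u,v)$ over the plane curve $w^{g_\infty}=f_\infty(z^{N_\infty}w^{m_\infty})$, which is smooth along $z=0$ because $g_\infty$ is prime to $p$; with this amendment your smoothness and inverse-map checks do go through. (A minor point in the same direction: your construction yields $w=u^{A'_\infty}v^{-N_\infty}=x^{A'_\infty}y^{-N_\infty-A'_\infty}$, the exponent forced by $u^{A'_\infty}=v^{N_\infty}w$; the exponent $N_\infty-A'_\infty$ in the displayed $\rho_\infty$ is a typo, so you do not literally ``recover the displayed $\rho_\infty$'' but a corrected version of it.)
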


\begin{Rmk}
Note that $X_i$ does not depend on the choice of $m_i,n_i$ up to isomorphism. More precisely, suppose that $({m'}_{\!i},{n'}_{\!i})$ is another pair of integers satisfying ${m'}_{\!i}{A'}_{\!i} + {n'}_{\!i}{N}_{i} = 1$ and let ${X'}_{\!i}$ be the set obtained from $({m'}_{\!i},{n'}_{\!i})$ in the same way, then ${X'}_{\!i}$ is isomorphic to $X_i$.
We give a proof of the case $i \in \{0,\ldots, r\}$. Now there is a relation $({n_i}-{n'}_{\!i}){N}_{i} = -({m_i}-{m'}_{\!i}){A'}_{\!i}$, then we have $e := (n_i - {n'}_{\!i})/{A'}_{\!i} \in \mathbb{Z}$ since ${N}_{i}$ and ${A'}_{\!i}$ are coprime non-negative integers. Hence consider the morphism $X_i \rightarrow {X'}_{\!i}\,;\,(x,u,z) \mapsto (x,u,u^ez)$ which has the obvious inverse, thus we conclude that $X_i \cong {X'}_{\!i}$. Also the uniqueness of $X_{\infty}$ can be proved similarly.
\end{Rmk}

\begin{Rmk}\label{RemarkOnDesing}
The action on $C$ of the group $\mu_N$ of $N$-th roots defined by $(x,y) \to (x,\zeta y)$ for $\zeta\in\mu_N$ is extended to that on $X_j$ by $(x,u,z) \to (x,\zeta^{{N}_{j}}u, \zeta^{m_j}z)$. Also a similar thing holds for $P_\infty$.
\end{Rmk}
We define the desingularization $X$ obtained by gluing $X_0,\ldots,X_r$ and $X_\infty$ along
$X_i\sminus \pi^{-1}(\{P_i\})$ and $X_j\sminus\pi^{-1}(\{P_j\})$ via the isomorphisms
\[
	X_i \sminus \pi^{-1}(\{P_i\}) \overset{\pi_i}{\longrightarrow} C\sminus\cP\overset{\pi_j}{\longleftarrow} X_j\sminus\pi^{-1}(\{P_j\}).
\]
Then by gluing $\pi_i: X_i \rightarrow C$, we also obtain a morphism $\pi: X \rightarrow C$ such that $\pi|_{X_i} = \pi_i$ for all $i \in \{0,\ldots,r,\infty\}$. As can be found in \cite[Section 3.2]{Archinard}, $X$ is the desingularization of $C$ under $\pi$. The following genus formula of $X$
is shown in the same way as in the case of $K=\mathbb{C}$, \cite[Theorem 4.1]{Archinard}.
\begin{Thm}
The genus of $X$ is given by
\[
	g(X) = 1 + \frac{1}{2}\biggl(rN - \sum_{j=0}^r (N,A_j) - \Bigl(N, N-\sum_{k=0}^r A_k\Bigr)\biggr).
\]
\begin{proof}
Let $C \rightarrow \mathbb{P}^1$ be the projection $(x_0:x_1:x_2) \mapsto (x_0:x_1)$ except for $(0:0:1) \mapsto (0:1)$ in {\sl Case 2}. Composing it and $\pi: X \rightarrow C$, we obtain a finite separable morphism $X \rightarrow \mathbb{P}^1$.
\begin{center}
\begin{tabular}{|c||c|c|} \hline
point $P$ of $C$ & \# of $\pi$-preimages $Q$ of $P$ & ramification index at $Q$\\\hline\hline
$(1:\lambda_j:0)$ & $(N,A_j)$ & $N/(N,A_j)$ \\\hline
$\infty$ & $(N,N-\sum{A_k})$ & $N/(N,N-\sum{A_k})$ \\\hline
other points & 1 & 1\\\hline
\end{tabular}
\end{center}
The genus of a projective line $\mathbb{P}^1$ is 0, so directly we see that
\[
	\begin{aligned}
	2g(X) -2 & = -2N + \sum_{j=0}^r (N,A_j)\!\biggl(\frac{N}{(N,A_j)}-1\biggr) + \bigl(N,N-\sum A_k\bigr)\!\biggl(\frac{N}{(N,N-\sum{A_k})}-1\biggr)\\
	\!\!& = rN - \bigl(N,N-\sum A_k\bigr) - \sum_{j=0}^r (N,A_j).
	\end{aligned}
\]
This is the desired conclusion.
\end{proof}
\end{Thm}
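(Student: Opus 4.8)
The plan is to apply the Riemann--Hurwitz formula to the finite separable morphism $\varphi\colon X \to \mathbb{P}^1(K)$ obtained by composing $\pi\colon X \to C$ with the projection to the $x$-line, $(x_0:x_1:x_2) \mapsto (x_0:x_1)$. First I would check that $\deg\varphi = N$: since $\pi$ is birational, $K(X) = K(C) = K(x)[y]/(y^N - f(x))$, and the polynomial $y^N - f(x)$ is irreducible of degree $N$ in $y$ by Proposition 2.3 (using $(N,A_0,\ldots,A_r)=1$), so $[K(X):K(x)] = N$. Because $N$ is coprime to $p = \operatorname{char} K$, this extension is separable and all ramification is tame; hence at each point $Q \in X$ the different exponent equals $e_Q - 1$, where $e_Q$ is the ramification index.

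Next I would read off the ramification data from the charts $X_j$ and $X_\infty$ of Propositions 2.6 and 2.7. Over $\lambda_j \in \mathbb{A}^1$ the fiber lies in $X_j$, where $u^{g_j} = f_j(x)$ with $f_j(\lambda_j) = \prod_{i\neq j}(\lambda_j-\lambda_i)^{A_i} \neq 0$ forces $u$ to take $g_j = (N,A_j)$ distinct nonzero values, giving $(N,A_j)$ preimages; at each, the relation $z^{N_j} = (x-\lambda_j)u^{m_j}$ with $u$ a unit shows that $z$ is a local uniformizer and $x-\lambda_j$ vanishes to order $N_j = N/(N,A_j)$, so $e_Q = N/(N,A_j)$. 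An analogous local computation in the chart $X_\infty$, carried out separately in Cases 1 and 2, yields $(N,\,N-\sum_{k=0}^r A_k)$ preimages of $\infty$, each of ramification index $N/(N,\,N-\sum_{k=0}^r A_k)$; in Case 3 the fiber over $\infty$ consists of $N$ distinct unramified points by Lemma 2.5. Everywhere else $f(x)\neq 0$, so $\varphi$ is \'etale.

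Finally, since $g(\mathbb{P}^1)=0$, I would substitute into
\[
2g(X) - 2 = -2N + \sum_{Q} (e_Q - 1),
\]
where the sum runs over ramification points. Grouping by fiber, the contribution over $\lambda_j$ is $(N,A_j)\bigl(N/(N,A_j)-1\bigr) = N - (N,A_j)$, and the contribution over $\infty$ is $N - (N,\,N-\sum_{k=0}^r A_k)$. Summing over $j=0,\ldots,r$ and adding the term at infinity gives $2g(X)-2 = rN - \sum_{j=0}^r (N,A_j) - (N,\,N-\sum_{k=0}^r A_k)$, whence the stated formula.

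I expect the main obstacle to be the bookkeeping at infinity: the three cases use genuinely different affine models for $X_\infty$ (Proposition 2.7), so confirming that each produces exactly $(N,\,N-\sum_{k=0}^r A_k)$ preimages with the uniform index $N/(N,\,N-\sum_{k=0}^r A_k)$ --- and that Case 3 is truly unramified over $\infty$ --- requires a careful case-by-case local analysis rather than one uniform argument. The computations over the finite $\lambda_j$ and the final arithmetic simplification are routine by comparison.
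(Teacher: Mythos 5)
Your proposal is correct and follows essentially the same route as the paper: both compose $\pi$ with the projection to the $x$-line to get a degree-$N$ separable (indeed tame, since $p \nmid N$) cover of $\mathbb{P}^1$, read off the same ramification table --- $(N,A_j)$ points of index $N/(N,A_j)$ over each $\lambda_j$, $(N,N-\sum A_k)$ points of index $N/(N,N-\sum A_k)$ over $\infty$ --- and apply Riemann--Hurwitz. Your additional verifications (degree via irreducibility of $y^N - f(x)$, tameness, the chart-by-chart local analysis including the Case 3 check that $(N,0)=N$ makes the fiber over $\infty$ unramified) only make explicit what the paper leaves implicit.
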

\section{The space of regular differential forms on $X$}
\setcounter{equation}{0}
In this section, for the desingularization map $\pi: X \to C$ constructed as in Section 2, we describe the regularity condition of rational differential forms on $X$. This enables us to give an explicit basis of the space $\varOmega[X]$ of regular differential forms on $X$, where ``regular'' is often called ``of first kind". Note that $\varOmega[X]$ is realized as a subspace of the space $\varOmega(C)$ of rational differential forms on $C$.

A general idea to describe 
the space of differential forms on plane curves and the Cartier operator on it
is found in St\"ohr-Voloch \cite{SV}. As explained there,
Gorenstein \cite[Theorem 12]{Gorenstein} gives a description of regular differential forms on the projective nonsingular model of a plane curve $\varGamma$, but our case does not satisfy his assumption: $y$ is an integral element over $K(x)$. This would mean that the Zariski closure of $\varGamma$ in ${\mathbb P}^2$ is regular at every infinite place. So let us formulate a lemma, which works in our case. The proof was done in the proof of \cite[Theorem 12]{Gorenstein}.

\begin{Thm}[Gorenstein]\label{Gorenstein}
Let $\varGamma$ be a plane curve ${\rm Spec}\,R$ with $R=K[x,y]/(F)$ for an irreducible element $F$ of $K[x,y]$ of degree ${\rm m}$. Let $L$ be the function field of $\varGamma$ and $X$ the nonsingular projective curve having the same function field $L$. Assume that $x$ considered as an element of $L$ is transendental over $K$ and $y$ considered as an element of $L$ is separable over $K(x)$. A rational differential form $\omega$ is regular on $X$ if and only if it can be written in the form
\[
	\frac{\phi(x,y)}{(\partial F/\partial y)(x,y)} dx
\]
such that $\phi(x,y)$ is an {\it adjoint element}, whose precise meaning is
\begin{enumerate}
\setlength{\itemsep}{0cm}
\renewcommand{\labelenumi}{(\roman{enumi})}
\item $\phi(x,y)\in {\frak C}$, where $\frak C$ is the conductor of $R=K[x,y]/(F)$ in the integral closure $\mbar{R}$ of $R$ in $L$, i.e. $\frak C := \{z\in R\,;\,z {\mbar{R}} \subset R\}$.
\item $\phi'(x',y'){x'}^{{\rm m}-3-{\rm h}} \in {\frak C}'$, where $\phi'$ be the polynomial in $x'$ and $y'$ defined by $\phi(x,y) = \phi'(x',y')/({x'})^{\rm h}$ with $(x',y')=(1/x,y/x)$ and ${\rm h}=\deg\phi$, and $\frak C'$ is the conductor of $R'=K[x',y']/(F')$ with $F(x,y)=F'(x',y')/({x'})^{\rm m}$ in the integral closure $\mbar{R'}$ of $R'$ in $L$.
\item $\phi''(x'',y''){y''}^{{\rm m}-3-{\rm h}}\in {\frak C}''$, where $\phi''$ be the polynomial in $x''$ and $y''$ defined by $\phi(x,y)=\phi''(x'',y'')/({y''})^{\rm h}$ with $(x'',y'')=(x/y,1/y)$  and ${\rm h}=\deg\phi$, and $\frak C''$ is the conductor of $R''=K[x'',y'']/(F'')$ with $F(x,y)=F''(x'',y'')/({y''})^{\rm m}$ in the integral closure $\lbar{R''}$ of $R''$ in $L$.
\end{enumerate}
\end{Thm}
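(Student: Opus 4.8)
The plan is to reduce the global regularity of $\omega$ on $X$ to three local conditions, one for each of the standard affine charts of $\mathbb{P}^2(K)$ covering the projective closure $\overline{\varGamma}$, and then to identify each local condition with membership in the appropriate conductor. First I would record that, since $y$ is separable over $K(x)$, the module $\Omega_{L/K}$ of Kähler differentials is one-dimensional over $L$ with basis $dx$, and $\partial F/\partial y \neq 0$ in $L$; differentiating the relation $F(x,y)=0$ gives $dy = -(\partial F/\partial x)(\partial F/\partial y)^{-1}\,dx$. Hence every rational differential form on $X$ can be written uniquely as $\omega = \phi\,(\partial F/\partial y)^{-1}\,dx$ for some $\phi \in L$, and the content of the theorem is to characterize the regularity of $\omega$ in terms of $\phi$.

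The heart of the argument is the affine chart, which yields condition (i). Write $A = K[x]$ and $R = A[y]$, and let $\overline{R}$ be the integral closure of $R$ in $L$. Since $L/K(x)$ is finite separable and $K$ is perfect, the smooth affine curve $\Spec \overline{R}$ is exactly the part of $X$ lying over this chart, and the theory of the different identifies the regular differentials there with $\overline{R}^{\vee}\,dx$ inside $\Omega_{L/K} = L\,dx$, where $\overline{R}^{\vee} = \operatorname{Hom}_A(\overline{R}, A)$ is the complementary module realized in $L$ via the trace form. I would then invoke two classical identities: the complementary module of the monogenic hypersurface extension $R = A[y]$ is $R^{\vee} = (\partial F/\partial y)^{-1}R$, and the conductor $\mathfrak{C} = \operatorname{Hom}_R(\overline{R}, R) = \{z\in L : z\overline{R}\subseteq R\}$ satisfies $\overline{R}^{\vee} = \mathfrak{C}\,R^{\vee} = (\partial F/\partial y)^{-1}\mathfrak{C}$. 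The latter follows from the adjunction $\operatorname{Hom}_A(\overline{R}, A) = \operatorname{Hom}_R(\overline{R}, \operatorname{Hom}_A(R,A))$ together with the invertibility of $R^{\vee}$. Combining these, $\omega = \phi\,(\partial F/\partial y)^{-1}\,dx$ is regular over the affine chart if and only if $\phi\,(\partial F/\partial y)^{-1} \in \overline{R}^{\vee}$, i.e. if and only if $\phi \in \mathfrak{C}$, which is exactly (i).

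Next I would transport this computation to the two charts at infinity. Carrying out the substitutions $(x',y') = (1/x,\,y/x)$ and $(x',y')=(x/y,\,1/y)$, I would re-express $\omega$ and the defining equation $F$ in the new coordinates; the homogenization factors $(x')^d$ and $(x')^e$ with $d=\deg\phi$ and $e=\deg F$ appear precisely as in the statement, and a direct computation shows that in each new chart $\omega$ again takes the Poincar\'e-residue form $\phi'\,(\partial F'/\partial y')^{-1}\,dx'$ (respectively $\phi''\,(\partial F''/\partial y'')^{-1}\,dx''$) up to the recorded powers. Applying the affine result of the previous step verbatim in each chart then produces conditions (ii) and (iii). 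Finally, since every point of $X$ lies over a point of $\overline{\varGamma}\subseteq \mathbb{P}^2(K)$ and the three charts cover $\mathbb{P}^2(K)$, the form $\omega$ is regular on all of $X$ if and only if it is regular over each chart, that is, if and only if (i), (ii) and (iii) all hold.

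I expect the main obstacle to be twofold. The substantive point is the conductor identity $\overline{R}^{\vee} = (\partial F/\partial y)^{-1}\mathfrak{C}$, which is what lets one read the smooth-model regularity condition off the conductor of the singular plane model; here one must genuinely use that $R$ is a Gorenstein (hypersurface) ring to know $R^{\vee}$ is invertible, since $R$ is in general not integrally closed. The more tedious point is the coordinate bookkeeping in the charts at infinity: one must check that each transformed differential is still of Poincar\'e-residue type and that the degree shifts by $(x')^d$ and $(x')^e$ (and the analogous powers of $y''$) are correct, so that the affine argument applies unchanged. This last step is exactly where the present formulation goes beyond Gorenstein's original theorem, whose integrality hypothesis on $y$ makes the chart $(x/y,\,1/y)$ superfluous and hence dispenses with condition (iii).
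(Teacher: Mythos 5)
Your affine-chart argument is sound, and it is in fact the classical core of the proof that this paper does not write out but merely cites (the paper's entire ``proof'' is the sentence that the argument is contained in Gorenstein's proof of his Theorem 12): the identification of the regular differentials over $\operatorname{Spec}\overline{R}$ with $\operatorname{Hom}_{K[x]}(\overline{R},K[x])\,dx$ via the different, Euler's formula $\operatorname{Hom}_{K[x]}(R,K[x])=(\partial F/\partial y)^{-1}R$, and the adjunction giving $\operatorname{Hom}_{K[x]}(\overline{R},K[x])=(\partial F/\partial y)^{-1}\mathfrak{C}$ do prove that regularity over the finite chart is equivalent to condition (i). But every one of these steps presupposes that $R$ is module-finite over $K[x]$, i.e.\ that $F$ is monic in $y$ up to a unit (note the theorem does not even assume this; it holds for the curves $y^N=f(x)$ of this paper). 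This is where your plan breaks: you claim the same argument applies ``verbatim'' in the charts at infinity, but there the defining polynomial is in general monic in neither variable. Concretely, in the paper's Case 2 ($F=y^N-f(x)$ with $e=\deg f>N$) one finds $F'=(y')^N(x')^{\,e-N}-\prod_i(1-\lambda_i x')^{A_i}$, which is never monic in $y'$, and if moreover $\lambda_0=0$ with $A_0\ge N$ (the situation of Section 6) it is not monic in $x'$ either; then $R'$ is finite over neither $K[x']$ nor $K[y']$, Euler's formula has no meaning, and $\operatorname{Hom}_{K[x']}(\overline{R'},K[x'])$ no longer computes the regular differentials over that chart. This failure of integrality at infinity is exactly why the theorem had to be restated chart-by-chart in the first place, and overcoming it is the actual content of the proof: Gorenstein's argument is local at each point, working with the complete local rings and their conductors (cf.\ Remark 3.2), not a repetition of the global trace-duality computation over a polynomial base.

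A second, related gap is hidden in your phrase ``up to the recorded powers.'' Transporting $\omega=\phi\,(\partial F/\partial y)^{-1}dx$ to the chart $(x',y')=(1/x,y/x)$ gives
\[
\omega=\pm\,\phi'(x',y')\,(x')^{\,e-d-3}\,\bigl(\partial F'/\partial y'\bigr)^{-1}dx',
\]
so even granting an affine criterion in that chart, what you obtain is the condition $\phi'\,(x')^{e-d-3}\in\mathfrak{C}'$, which is not condition (ii): for $d>e-3$ the exponent is negative and the two conditions are genuinely inequivalent (for instance $F=xy-1$, $\phi=x$ satisfies (i)--(iii) literally, yet $\omega=dx$ is not regular on $X=\mathbb{P}^1$). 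Reconciling the exponent $e-d-3$ with the stated conductor conditions --- this is where the degree bound $\deg\phi\le e-3$ of Gorenstein's original formulation lives --- is a real step, not bookkeeping, and the proposal never performs it. Finally, your closing remark inverts the logic: it is not merely that Gorenstein's integrality hypothesis makes chart (iii) superfluous; your own method silently requires that same hypothesis in every chart in which you invoke it, which is precisely what fails here.
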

\begin{Rmk}
Here, $\phi(x,y) \in {\frak C}$ is equivalent to $\phi(x,y) \in {\frak C}_P := \{z\in R_P\,;\,z {\lbar{R_P}} \subset R_P\}$ for maximal ideal $P$ of $R$, where ${\lbar{R_P}}$ is the integral closure of $R_P$ in $L$. Moreover ${\frak C}_P ={\frak C}_P^* \cap R_P$, where ${\frak C}_P^*$ is the conductor of $(R_P)^{*}$ in $(\lbar{R_P})^{*}$, where $*$ means taking the completion (cf.\ \cite[Theorem 2]{Gorenstein}). If the Zariski closure of $\varGamma$ in ${\mathbb P}^2$ is regular at every infinite place, (i) and (ii) in Theorem \ref{Gorenstein} can be replaced by ${\rm h}\le {\rm m}-3$.
\end{Rmk}

Let us return to our case $F=y^N-f(x)$. Let $\mu_N$ be the subgroup of $\mbar{K}^\times\!$ consisting of $N$-th roots of unity. We consider the case that $K$ contains $\mu_N$ and $\{\lambda_0,\ldots,\lambda_r\}$.
Let $\phi(x,y)$ be an adjoint element in the sense of the definition in Theorem \ref{Gorenstein}. Using $y^N=f(x)$, one can write
\[
	\phi(x,y) = \phi_0(x) + \phi_1(x) y + \cdots + \phi_{N-1}(x) y^{N-1}
\]
with $\phi_j(x) \in K[x]$. Consider the action on $R$ of the group $\mu_N$ consisting of $N$-th roots of unity by $(x,y) \mapsto (x,\zeta y)$ for $\zeta\in \mu_N$. As this action stabilizes $\frak C$, $\frak C'$ and $\frak C''$, we have that $\phi(x,\zeta y)$ is also an adjoint element for all $\zeta\in \mu_N$. This implies that each term $\phi_j(x)y^j$ is an adjoint element. Clearly $\phi_j(x)$ is uniquely written as $\varphi(x)\prod_{i=0}^r(x-\lambda_i)^{a_i}$, where $\varphi(x)$ is coprime to $x-\lambda_i$ for $i=0,1,\ldots,r$. As we can check $\phi_j(x)y^j \in {\frak C}$ by looking at whether $\phi_j(x)y^j$ in ${\frak C}_P$ for all $P \in \{P_0,\ldots,P_r\}$, we conclude that $\varphi(x)\prod_{i=0}^r(x-\lambda_i)^{a_i}y^j\in {\frak C}$ if and only if $\prod_{i=0}^r(x-\lambda_i)^{a_i}y^j\in {\frak C}$. Also at an infinite place, the condition (ii) and (iii) of Theorem \ref{Gorenstein} is described as the degree of $\phi_j$ is less than or equal to a certain constant depending only on $C$ (cf.\ the proof of Proposition \ref{Prop_X_infinite} below). Hence, if $\varphi(x)\prod_{i=0}^r(x-\lambda_i)^{a_i}y^j$ is an adjoint element, then $\prod_{i=0}^r(x-\lambda_i)^{a_i}y^j\in {\frak C}$ is an adjoint element. Thus we conclude
\begin{Lem}
The regular differential module $\varOmega[X]$ has a basis consisting of elements of the form
\[
	\omega_{(s,\bm{a})} := \frac{\prod_{i=0}^r(x-\lambda_i)^{a_i}}{y^s}dx, \quad 0 \leq s \leq N-1
\]
where $\bm{a} = (a_0, \ldots,a_r)$ with $a_i \geq 0$.
\end{Lem}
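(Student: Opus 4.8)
The plan is to deduce the statement directly from Gorenstein's criterion (Theorem~\ref{Gorenstein}) combined with the $\mu_N$-equivariance exploited in the discussion above. Since $F=y^N-f(x)$ and $N$ is invertible in $K$, we have $\partial F/\partial y=Ny^{N-1}$ with $N\in K^\times$, so Theorem~\ref{Gorenstein} says that a rational differential is regular on $X$ exactly when it equals $\phi(x,y)y^{-(N-1)}\,dx$ (up to the unit $N$) for an adjoint element $\phi$. Thus it suffices to show that the $K$-space of adjoint elements is spanned by monomials $\prod_{i=0}^r(x-\lambda_i)^{a_i}y^{j}$, and then to read off the normalization $s:=N-1-j$.

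First I would use the $\mu_N$-action $(x,y)\mapsto(x,\zeta y)$ to decompose an adjoint $\phi$ as $\phi=\sum_{j=0}^{N-1}\phi_j(x)y^j$ and recall, as established above, that each eigencomponent $\phi_j(x)y^j$ is again adjoint because $\mu_N$ stabilises $\frak C$, $\frak C'$ and $\frak C''$. Setting $s:=N-1-j$, every regular form is then a $K$-linear combination of forms $\phi_j(x)y^{-s}\,dx$ with $0\le s\le N-1$, where $\phi_j$ may be any polynomial for which $\phi_j(x)y^{N-1-s}$ is adjoint. This already reduces the problem to replacing a general numerator $\phi_j$ by pure products $\prod_i(x-\lambda_i)^{a_i}$.

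Next I would describe, for each fixed $j$, the $K$-space $V_j:=\{\phi_j\in K[x]:\phi_j(x)y^j\text{ is adjoint}\}$. By the discussion above, the finite conditions~(i) at $P_0,\dots,P_r$ translate into per-point divisibility lower bounds $\operatorname{ord}_{\lambda_i}(\phi_j)\ge b_i$ on the vanishing orders (here the factor $\varphi$ prime to every $x-\lambda_i$ plays no role), while the conditions~(ii) and~(iii) at infinity translate into a single upper bound $\deg\phi_j\le D_j$ (cf.\ the proof of Proposition~\ref{Prop_X_infinite}). Hence $V_j=\prod_{i=0}^r(x-\lambda_i)^{b_i}\cdot K[x]_{\le D_j-\sum_i b_i}$ is finite-dimensional.

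The key step is then to note that such a $V_j$ is spanned by the products $\prod_{i=0}^r(x-\lambda_i)^{a_i}$ that it contains: the powers $(x-\lambda_0)^c$ already form a $K$-basis of $K[x]$, so products of total degree $\le D_j-\sum_i b_i$ span $K[x]_{\le D_j-\sum_i b_i}$, and multiplying by $\prod_i(x-\lambda_i)^{b_i}$ shows that the products $\prod_i(x-\lambda_i)^{a_i}$ with $a_i\ge b_i$ and $\sum_i a_i\le D_j$ span $V_j$. Therefore the $\omega_{(s,\bm a)}$ span $\varOmega[X]$, and extracting a maximal linearly independent subset yields a basis of the claimed shape. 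I expect the main obstacle to be establishing that the full adjoint condition really reduces to per-point divisibility lower bounds together with one degree upper bound---in particular pinning down the infinite-place conditions~(ii),(iii)---since that structural fact is precisely what makes $V_j$ product-spanned; the $\mu_N$-reduction and the spanning computation are then routine.
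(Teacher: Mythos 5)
Your proposal is correct and takes essentially the same route as the paper: Gorenstein's criterion with $\partial F/\partial y = Ny^{N-1}$, the $\mu_N$-eigencomponent decomposition of an adjoint element, the observation that the conductor conditions at the finite places depend only on the vanishing orders at the $\lambda_i$ (so the coprime factor $\varphi$ plays no role) while the infinite-place conditions amount to a degree bound, and finally expansion in powers of $(x-\lambda_0)$ to obtain spanning by the products $\prod_i(x-\lambda_i)^{a_i}y^j$. The structural fact you flag as the main obstacle is precisely what the paper also leaves to the explicit local computations (cf.\ Propositions \ref{Prop_X_finite} and \ref{Prop_X_infinite}), so the two arguments match in both outline and level of detail.
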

Note that $\pi^{*}\omega_{(s,\bm{a})}$ is regular at every finite place except $Q_i \in \pi^{-1}(\{P_i\})$ for $i = 0,\ldots,r$. Let us find the condition that $\pi^{*}\omega_{(s,\bm{a})}$ is regular at $Q_i$ and at infinite places.
\begin{Prop}\label{Prop_X_finite}
For $j \in \{0,\ldots,r\}$, the pull-back $\pi^{*}\omega_{(s,\bm{a})} \in \varOmega(X)$ is regular at $Q_j \in \pi^{-1}(\{P_j\})$ if and only if
\[
	a_j \geq \frac{sA_j + (N,A_j)}{N} - 1.
\]
\begin{proof}
The equations defining $X_j$ gives other equations\vspace{-1mm}
\begin{align*}
	{N}_{j} z^{{N}_{j}-1}dz &= u^{m_j}dx + m_j(x-\lambda_j)u^{m_j-1}du,\\
	g_ju^{g_j-1}du &= (df_j/dx)dx.\\[-7mm]
\end{align*}
By $\pi^{*}(x) = x$ and $\pi^{*}(y)=u^{n_j}z^{{A'}_{\!j}}$ with $\pi^*(x-\lambda_j) = u^{-m_j}z^{N_j}$
, a direct calculation shows
\begin{equation}\label{Eq:ProofOfProp3.4}
	\pi^{*}\omega_{(s,\bm{a})} = \frac{Nu^{g_j-sn_j-(1+a_j)m_j}z^{-s{A'}_{\!j}+(1+a_j)N_j-1}\prod_{i\ne j}(x-\lambda_i)^{a_i} }{g_jf_j(x) + m_j(x-\lambda_j)(df_j/dx)}dz.
\end{equation}
Hence $\pi^{*}\omega_{(s,\bm{a})}$ is regular at $Q_j$ if and only if $-s{A'}_{\!j}+(1+a_j){N}_{j}-1 \geq 0$.
\end{proof}
\end{Prop}
Similarly, the regularity at the fiber of the infinity is described as below:
\begin{Prop}\label{Prop_X_infinite}
The pull-back $\pi^{*}\omega_{(s,\bm{a})} \in \varOmega(X)$ is regular at every $Q_{\infty} \in \pi^{-1}(\{P_{\infty}\})$ in {\sl Cases 1} and {\sl 2} and at every $Q_\infty \in \pi^{-1}(\{(0:1:\zeta); \zeta^N=1\})$ in {\sl Case 3} if and only if
\[
	\sum_{k=0}^r a_k \leq \frac{s\sum{A_k} - (N, N-\sum{A_k})}{N} - 1.
\]
\begin{proof}
In each case, we can write $\omega_{(s,\bm{a})} = {x_0}^{s-\sum\hspace{-0.1mm}{a_k}-2}{x_2}^{-s}\prod(x_1-\lambda_ix_0)^{a_i}(x_0dx_1-x_1dx_0)$.
\begin{itemize}
\item {\sl Case 1:} $N - \sum{A_k} > 0$. \quad In this case, recall that $\x = {x_0}/{x_1}$ and $\y = {x_2}/{x_1}$;
\[
	\omega_{(s, \bm{a})} = - \x^{s-\sum\hspace{-0.1mm}{a_k}-2}\y^{-s}\prod(1-\lambda_i\x)^{a_i}d\x.
\]
The equations defining $X_{\infty}$ gives other equations\vspace{-1mm}
\begin{align*}
	N_{\infty} z^{{N}_{\sinfty}-1}dz &= u^{m_{\sinfty}}d\x + m_{\infty}\x u^{m_{\sinfty}-1}du,\\
	g_{\infty}u^{g_{\sinfty}-1}du &= (df_{\infty}/d\x)d\x.
\end{align*}
Now recall that $\pi^{*}(\x) = \x$ and $\pi^{*}(\y)=u^{n_{\sinfty}}z^{{A'}_{\!\sinfty}}$, then we obtain
\begin{equation}\label{Eq:3.2}
	\pi^{*}\omega_{(s,\bm{a})} = \frac{-Nu^{g_{\sinfty}-s(m_{\sinfty}+n_{\sinfty})+(1+\sum\hspace{-0.1mm}{a_k})m_{\sinfty}}z^{s({N}_{\sinfty}-{A'}_{\!\sinfty})-(1+\sum\hspace{-0.1mm}{a_k}){N}_{\sinfty}-1}\prod(1-\lambda_i\x)^{a_i}}{g_{\infty}f_{\infty}(\x) + m_{\infty}\x(df_{\infty}(\x)/d\x)}dz.
\end{equation}
Hence $\pi^{*}\omega_{(s,\bm{a})}$ is regular at $Q_{\infty}\hspace{-0.3mm}$ if and only if $s({N}_{\infty}-{A'}_{\!\infty})-(1+\sum\hspace{-0.1mm}{a_k}){N}_{\infty}-1 \geq 0$.

\item {\sl Case 2:} $N - \sum{A_k} < 0$. \quad In this case, recall that $\x = {x_0}/{x_2}$ and $\y = {x_1}/{x_2}$;
\[
	\omega_{(s,\bm{a})} = \x^{s-\sum\hspace{-0.1mm}{a_k}-2}\prod(\y - \lambda_i\x)^{a_i}(\x d\y - \y d\x).
\]
The equations defining $X_{\infty}$ give other equations
\begin{align*}
	{A'}_{\!\infty} u^{{A'}_{\!\sinfty}-1}du &= {N}_{\infty} v^{{N}_{\sinfty}-1}wdv + v^{{N}_{\sinfty}}dw,\\
	{A'}_{\!\infty} z^{{A'}_{\!\sinfty}-1}dz &= w^{n_{\sinfty}}dv + n_\infty vw^{n_{\sinfty}-1}dw,\\
	g_{\infty}w^{g_{\sinfty}-1}dw &= (df_{\infty}(u)/du) du.
\end{align*}
%since $v^{{N}_{\sinfty}-1}w^{1-n_{\sinfty}}z^{\color{red}A'_{\sinfty}} = v^{{N}_{\sinfty}}w$, then we have
A tedious computation with these equations shows
\begin{equation}\label{Eq:3.3}
	\pi^{*}\omega_{(s,\bm{a})} = \frac{Nw^{g_{\sinfty}+s(m_{\sinfty}-n_{\sinfty})-(1+\sum\hspace{-0.1mm}{a_k})m_{\sinfty}}z^{s({N}_{\sinfty}+{A'}_{\!\sinfty})-(1+\sum\hspace{-0.1mm}{a_k}){N}_{\sinfty}-1}\prod (1-\lambda_iu)^{a_i}}{m_\infty u(df_{\infty}(u)/du)-g_{\infty}f_{\infty}(u)}dz.
\end{equation}
Hence $\pi^{*}\omega_{(s,\bm{a})}$ is regular at $Q_\infty\hspace{-0.3mm}$ if and only if ${s({N}_{\infty}+{A'}_{\!\infty})-(1+\sum\hspace{-0.1mm}{a_k}){N}_{\infty}-1} \geq 0$.

\item {\sl Case 3:} $N - \sum\hspace{-0.25mm}A_i = 0$. \quad In this case, 
we put $y_\infty = x_2/x_1$ and $z=x_0/x_1$.
One can confirm that $\pi^{*}(\omega_{s,\bm{a}})$ is given as
\begin{equation}
	\pi^{*}\omega_{(s,\bm{a})} = \frac{-z^{s-\sum\hspace{-0.25mm}a_i-2}\prod(1-\lambda_iz)^{a_i}}{\y^s}dz
\end{equation}
clearly, so $\pi^{*}(\omega_{s,\bm{a}})$ is regular at every
$Q_{\infty} \in \pi^{-1}(\{(0:1:\zeta) ; \zeta^N=1\})$ 
if and only if $s-\sum\hspace{-0.25mm}a_i-2 \geq 0$.
\end{itemize}
Thus the proposition holds in every case.
\end{proof}
\end{Prop}
Using the discussion above, we describe the space $\varOmega[X]$.
\begin{Thm}\label{TheoremSection3}
Assume that $K$ contains $\mu_N$ and $\{\lambda_0,\ldots,\lambda_r\}$. For $0 \leq s \leq N-1$, let $V_s$ be the subspace of $\varOmega[X]$ having the character $\zeta \mapsto \zeta^s$ under the action $(x,y)\to (x,\zeta y)$ of $\mu_n$ on $\varOmega[X]$. Note $\varOmega[X]= \bigoplus_{s=0}^{N-1} V_s$.
Put
\[
	\begin{aligned}
	d_s &= \max\,\biggl\{0,\left\lfloor \frac{s\sum{A_k} - (N,N-\sum{A_k})}{N} \right\rfloor -\sum_{j=0}^r \left\lceil \frac{sA_j + (N,A_j)}{N}-1\right\rceil\biggr\},\\
	e_{s,j} &= \left\lceil \frac{sA_j + (N,A_j)}{N}-1 \right\rceil\text{ with }\bm{e_s}=(e_{s,0},e_{s,1},\ldots,e_{s,r}).
	\end{aligned}
\]
We have $\dim V_s=d_s$. Moreover, a basis of $V_s$ is given by
\[
	x^m \omega_{(s,\bm{e_s})} = x^m \frac{\prod_{j=0}^r(x-\lambda_j)^{e_{s,j}}}{y^s}dx
\]
for $0 \leq m \leq d_s-1$.
\begin{proof}
It is obvious that $x^m \omega_{(s,\bm{e_s})} \in V_s$ for all $0 \leq m \leq d_s-1$, since $x^m$ is a linear combination of $(x-\lambda_0)^k$ for $0\le k\le m$ and for $\bm{a} = \bm{e_s} + (k,0,\ldots,0)$, so we have $\omega_{(s,\bm{a})} \in \varOmega[X]$ by Proposition 3.4 and Proposition 3.5. For the converse, an arbitrary element of $V_s$ is a linear combination of $\omega_{(s, \bm{a})}$ with $a_j \geq e_{s,j}$ for all $j \in \{0,\ldots,r\}$ and
\[
	\sum_{k=0}^r a_k \leq \left\lfloor \frac{s\sum{A_k} - (N,N-\sum{A_k})}{N} \right\rfloor - 1.
\]
Now rewriting $\omega_{(s, \bm{a})} = \varphi(x)\omega_{(s,\bm{e_s})}$ where $\deg\varphi \leq d_s-1$, then $\varphi(x)$ is spanned by $\{1,x,\ldots,x^{d_s-1}\}$. Clearly these are linearly independent.
\end{proof}
\end{Thm}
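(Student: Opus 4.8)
The plan is to use the $\mu_N$-action to cut $\varOmega[X]$ into the eigenspaces $V_s$ and then reduce the determination of each $V_s$ to a question about polynomials in $x$ of bounded degree. The basis of $\varOmega[X]$ furnished by Lemma 3.3 consists of eigenvectors: each $\omega_{(s,\bm a)}$ carries the character $\zeta\mapsto\zeta^s$ of the action $(x,y)\mapsto(x,\zeta y)$, since only the factor $y^{-s}$ is affected. Hence $\varOmega[X]=\bigoplus_{s=0}^{N-1}V_s$ with no extra input, and $V_s$ is spanned precisely by those $\omega_{(s,\bm a)}$ that actually lie in $\varOmega[X]$, i.e.\ by those whose pull-back is regular at every point of $\mathcal P$.

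Next I would translate the two regularity criteria into integer inequalities on $\bm a=(a_0,\dots,a_r)$. Proposition \ref{Prop_X_finite} gives, at each $Q_j$, the condition $a_j\ge \frac{sA_j+(N,A_j)}{N}-1$; since $a_j\in\mathbb Z$ this is exactly $a_j\ge e_{s,j}$. Proposition \ref{Prop_X_infinite} gives $\sum_k a_k\le \frac{s\sum A_k-(N,N-\sum A_k)}{N}-1$, and subtracting the integer $1$ commutes with the floor, so this reads $\sum_k a_k\le \lfloor \frac{s\sum A_k-(N,N-\sum A_k)}{N}\rfloor-1$. Writing $a_j=e_{s,j}+b_j$ with $b_j\ge0$ and factoring the common form $\omega_{(s,\bm{e_s})}$ out of every admissible $\omega_{(s,\bm a)}$, so that $\omega_{(s,\bm a)}=\prod_j(x-\lambda_j)^{b_j}\,\omega_{(s,\bm{e_s})}$, the problem collapses to determining the span inside $K[x]$ of the polynomials $\prod_j(x-\lambda_j)^{b_j}$ subject to $b_j\ge0$ and $\sum_j b_j\le D$, where $D:=\lfloor \frac{s\sum A_k-(N,N-\sum A_k)}{N}\rfloor-1-\sum_j e_{s,j}$, so that $d_s=\max\{0,D+1\}$. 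Multiplication by the nonzero form $\omega_{(s,\bm{e_s})}$ is injective on $K(C)$, so this span and $V_s$ have equal dimension.

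The heart of the matter is then the claim that this span is exactly the space of polynomials of degree at most $D$. One inclusion is immediate, since every generator has degree $\le D$. For the reverse, taking $b_0=k$ and all other $b_j=0$ (admissible for $0\le k\le D$) yields $(x-\lambda_0)^k$; these $D+1$ polynomials have distinct degrees, hence are linearly independent and span all polynomials of degree $\le D$, and in particular each $x^m$ with $0\le m\le D$ is a linear combination of them. Thus when $D\ge0$ we get $\dim V_s=D+1=d_s$, with $\{x^m\omega_{(s,\bm{e_s})}:0\le m\le d_s-1\}$ a basis; when $D<0$ there is no admissible $\bm a$ whatsoever, so $V_s=0=d_s$, which finishes the count in the degenerate case as well.

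I expect the only genuine subtlety, and the point I would stress, to be that the $\omega_{(s,\bm a)}$ are very far from linearly independent: the products $\prod_j(x-\lambda_j)^{a_j}$ satisfy many linear relations, so one cannot simply count the admissible multi-indices $\bm a$ and expect the answer $d_s$. The resolution is exactly the reduction above: once the forced factor $\prod_j(x-\lambda_j)^{e_{s,j}}$ is divided out, the relevant span is an honest space of polynomials of bounded degree $D$, whose dimension $D+1$ is read off at once, matching $d_s$.
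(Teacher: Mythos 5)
Your proposal is correct and follows essentially the same route as the paper: decompose $\varOmega[X]$ by $\mu_N$-characters via the eigenbasis of Lemma 3.3, convert Propositions \ref{Prop_X_finite} and \ref{Prop_X_infinite} into the integer inequalities $a_j\ge e_{s,j}$ and $\sum a_k\le\lfloor\cdot\rfloor-1$, factor out $\omega_{(s,\bm{e_s})}$ to reduce to polynomials of degree at most $d_s-1$, and use the powers $(x-\lambda_0)^k$ to identify that span. Your write-up merely makes explicit a few points the paper leaves tacit (injectivity of multiplication by $\omega_{(s,\bm{e_s})}$ and the degenerate case $d_s=0$), so there is nothing further to add.
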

\begin{Rmk}
If $K$ is perfect and contains $\mu_N$, then each member of the basis obtained above is defined over $K$.
In fact, since $\omega_{(s,\bm{e_s})}$ is the unique ``monic" element having the lowest-degree, it is stable under the action of ${\rm Gal}(\mbar{K}/K)$.
\end{Rmk}
\section{Space of regular differential forms on $C$}
\setcounter{equation}{0}
In this section, we consider regular differential forms on $C$.
As we see in Lemma 2.5, $C$ has singularities. We refer to \cite[Chapter\,IV, $\S 3.9$]{Serre} for the regular differential forms of singular curves. Let us give a brief review of it. We set $\varOmega[C] := \bigcap_{P \in C} \varOmega[C]_{P}$ with %\vspace{-1mm}
\begin{equation}\label{SerreRegDiffForm}
	\varOmega[C]_{P} := \biggl\{\omega \in \varOmega(X); \raise1ex\hbox{$\displaystyle\sum_{\pi(Q) = P}$}{\rm res}_{Q}\bigl(\pi^{*}(h)\omega\bigr)=0, \text{ for all } h \in {\mathcal O}_{C,P}\biggr\},
\end{equation}
where $\pi: X \rightarrow C$ is the desingularization map we constructed in Section 2. Note that $\varOmega[C]_{P}$ is a ${\mathcal O}_{C,P}$-module and
that $\varOmega[C]$ is the space of global sections of the sheaf $U \mapsto \bigcap_{P \in U} \varOmega[C]_{P}$, which is the dualizing sheaf on $C$.

First, we look at $\varOmega[C]_{P_j}$ for $j=0,\ldots,r$.
We use the notation of Proposition \ref{DesingFinitePlace}.
\begin{Lem}\label{lem:4.1}
For $j \in \{0,\ldots,r\}$ we set $d \geq 0$ and $e \in \{1,\ldots,g_j\}$. A differential form $u^{g_j-e}dz/z^{d+1}$ belongs to $\varOmega[C]_{P_j}$ if and only if any pair $(a,b)$ of non-negative integers does not satisfy
%\vspace{-1.5mm}
\begin{numcases}{}
%\hspace{-0.5mm}
aN_j + b{A'}_{\!j} = d, \notag\\
%\hspace{-0.5mm}
-am_j + bn_j \equiv e~({\rm mod}\,g_j).\notag
\end{numcases}
%\vspace{-6.5mm}
\begin{proof}
Any element of ${\mathcal O}_{C,P_j}$ is 
written as $\alpha h$ for $\alpha\in {\mathcal O}_{C,P_j}^\times$
and for $h := (x-\lambda_j)^ay^b$ for non-negative integers $a,b$.
As $\pi^*(x-\lambda_j) = u^{-m_j}z^{N_j}$ and $\pi^*(y)=u^{n_j}z^{A'_j}$, we have
\[
\pi^*(h)u^{g_j-e}dz/z^{d+1}= u^{-am_j+bn_j+g_j-e}z^{aN_j+bA'_j} dz/z^{d+1}.
\]
The sum of the residues at
$Q_j = (\lambda_j,u,0) \in \pi^{-1}(\{P_j\})$ where $u$ runs among $g_j$-th roots of $f_j(\lambda_j)$ is not zero if and only if
$aN_j+bA'_j = d$ and $-am_j + bn_j \equiv e \pmod{g_j}$.
%, then a sum of residues is $0$ if the index of $u$ is not divided by $g_j$.
\end{proof}
\end{Lem}

According to Lemma \ref{lem:4.1} and Proposition \ref{B4}, we directly obtain
\begin{enumerate}
\setlength{\itemsep}{0cm}
\renewcommand{\labelenumi}{(\roman{enumi})}
\item[(i)] if $d \geq g_j{N}_{j}{A'}_{\!j}-{N}_{j}-{A'}_{\!j} + 1$, then $u^{g_j-e}dz/z^{d+1} \hspace{-0.3mm}\notin \varOmega[C]_{P_j}$ for all $e \in \{1,\ldots,g_j\}$;
\item[(ii)]  for $d_0 = g_j{N}_{j}{A'}_{\!j}-{N}_{j}-{A'}_{\!j}$ and $e_0 \equiv m_j-n_j~({\rm mod}\ g_j)$,
we have $u^{g_j-e_0}dz/z^{d_0+1} \in \varOmega[C]_{P_j}$.
Since $u^{g_j}=f_j(x)\in {\mathcal O^\times_{C,P_j}}$, we do not need to care about the choice of $e_0$.
\end{enumerate}
Moreover,
\begin{enumerate}
\item[(iii)]
the differential form $u^{g_j-e_0}dz/z^{d_0+1}$ in (ii) is a generator of $\varOmega[C]_{P_j}$.
Indeed, by Lemma \ref{lem:4.1} and Proposition \ref{B:Duality},
any $u^{g_j-e}dz/z^{d+1} \in \varOmega[C]_{P_j}$
can be written as $(x-\lambda_j)^{a'}y^{b'}u^{g_j-e_0}dz/z^{d_0+1}$
for non-negative intgers $a'$ and $b'$,
up to multiple of an element of ${\mathcal O^\times_{C,P_j}}$.
\end{enumerate}

Let us rewrite the generator of $\varOmega[C]_{P_j}$.
%If $\pi^*\omega_{s,\bm{a}} = u^{g_j-e}dz/z^{d+1}$, then %\hspace{-0.3mm}\in \varOmega(X)$ is equal to 
%by (3.1) the polynomial $\prod(x-\lambda_i)^{a_i}$ is equal to\vspace{-1mm}
%\begin{equation}\label{between4.1and4.2}
%	\frac{g_ju^{g_j}+m_j(x-\lambda_j)(\partial{f_j}/\partial{x})(x)}{N} \cdo%t u^{sn_j+(1+a_j)m_j-e}z^{-(1+a_j){N}_{j}+s{A'}_{\!j}-d}.\vspace{-1mm}
%\end{equation}
%Hence $u^{sn_j+(1+a_j)m_j-e}z^{-(1+a_j){N}_{j}+s{A'}_{\!j}-d}$
%%must be transformed into
%is a polynomial in $x$. By using the relation $x-\lambda_j = u^{-m_j}z^{{N}_{j}}$, we have $s=dm_j+e{N}_{j}$.
\begin{Lem}\label{Lem:C-FinitePlace}
For $j \in \{0,\ldots,r\}$, the pull-back $\pi^*(dx/y^{N-1})$ is a generator of $\varOmega[C]_{P_j}$.
\end{Lem}
\begin{proof}
By \eqref{Eq:ProofOfProp3.4}, up to multiple of an element of ${\mathcal O}_{C,P_j}^\times$,
the pull-back $\pi^*(dx/y^{N-1})=\pi^{*}\omega_{(N-1, \bm{0})}$ is equal to
\[
u^{g_j-(N-1)n_j-m_j}z^{-(N-1)A'_j + N_j -1 }dz.
\]
This is of the same form as in (ii) above by $N=g_jN_j$.
%Suppose that $\pi^{*}\omega_{(s, \bm{0})} = u^{g_j-e}dz/z^{d+1}$ with $d = g_j{N}_{j}{A'}_{\!j}-{N}_{j}-{A'}_{\!j}$ and $e = m_j -n_j + g_jk$. From above discussion, we get $s = dm_j+e{N}_{j} =N(m_j{A'}_{\!j}+k)-1$ thus this means that $s = N-1$ and $k=n_jN_j$ since $0 \leq s < N$. Then one can see that 
%\begin{eqnarray*}
%sn_j+m_j-e &=& (N-1)n_j + m_j - (m_j-n_j+g_jn_jN_j) = 0,\\
%-{N}_{j}+s{A'}_{j}-d &=&-N_j + (N-1)A'_j-(g_jN_jA'_j-N_j-A'_j)=0.
%\end{eqnarray*}
%%$sn_j+m_j-e = -{N}_{j}+s{A'}_{\!j}-d = 0$,
%By \eqref{between4.1and4.2}, $\prod(x-\lambda_i)^{a_i}$ is invertible since $g_ju^{g_j}+m_j(x-\lambda_j)(\partial{f_j}/\partial{x})(x)$ is not zero at $Q_j \hspace{-0.3mm}\in \pi^{-1}(\{P_j\})$. This means that $a_j = 0$.
\end{proof}
%\end{Lem}

Next, we describe a generator of $\varOmega[C]_{\Pinf}$.
We use the notation of Proposition \ref{DesingInfinitePlace}.

In {\sl Case 1}, Lemma \ref{lem:4.1} holds after replacing $j$ by $\infty$.
Hence we similarly obtain:
\begin{enumerate}
\setlength{\itemsep}{0cm}
\renewcommand{\labelenumi}{(\roman{enumi})}
\item if $d \geq g_{\infty}{N}_{\infty}{A'}_{\!\infty}-{N}_{\infty}-{A'}_{\!\infty} + 1$, then $u^{g_{\infty}-e}dz/z^{d+1} \hspace{-0.3mm}\notin \varOmega[C]_{\Pinf}$ for all $e \in \{1,\ldots,g_{\infty}\}$;
\item if $d_0 = g_{\infty}{N}_{\infty}{A'}_{\!\infty}-{N}_{\infty}-{A'}_{\!\infty},\,e_0 \equiv m_{\infty}-n_{\infty}~({\rm mod}\ g_{\infty})$, then $u^{g_{\infty}-e_0}dz/z^{d_0+1} \in \varOmega[C]_{\Pinf}$;
\item[(iii)] the differential form $u^{g_{\infty}-e_0}dz/z^{d_0+1}$ in (ii) is a generator of $\varOmega[C]_{P_\infty}$.
\end{enumerate}
%\end{enumerate}
%If $u^{g_{\infty}-e}dz/z^{d+1} \hspace{-0.3mm}\in \varOmega(X)$ is equal to (3.2), the polynomial $\prod(1-\lambda_ix)^{a_i}$ is equal to\vspace{-1mm}
%\[
%	\frac{g_{\infty}u^{g_{\sinfty}}+m_{\infty}x(\partial{f}_{\infty}/\partia%l{x})(x)}{N} \cdot u^{s(m_{\sinfty}+n_{\sinfty})-(1+\sum\hspace{-0.1mm}{a_k})m_{\sinfty}-e}z^{-s({N}_{\sinfty}-{A'}_{\!\sinfty})+(1+\sum\hspace{-0.1mm}{a_k}){N}_{\sinfty}-d}.\vspace{-1mm}
%\]
%By using the relation $x = u^{-m_{\sinfty}}z^{{N}_{\sinfty}}$, then we have $s = dm_{\infty}+e{N}_{\infty}$.
\begin{Lem}
In {\sl Case 1}, the pull-back $\pi^{*}\omega_{(N-1,\bm{a})}$ is a generator of $\varOmega[C]_{\Pinf}$ if $\sum a_k = N-3$. 
\begin{proof}
By \eqref{Eq:3.2}, up to multiple of an element of ${\mathcal O}^\times_{C,P_\infty}$,
the pull-back $\pi^*\omega_{(N-1,\bm{a})}$ with $\sum {a_k} = N-3$
is equal to
\[
u^{g_\infty - (N-1)n_\infty - m_\infty}z^{(N-1)A'_\infty+N_\infty-1}dz.
\]
This is of the same form as in (ii) above by $N=g_\infty N_\infty$.
%We set $d = g_{\infty}{N}_{\infty}{A'}_{\!\infty}-{N}_{\infty}-{A'}_{\!\infty}$ and $e=m_{\infty}+(N-1)n_{\infty}$, thus $s = dm_{\infty}+e{N}_{\infty} = N-1$. By substituting them, one can easily check $-s({N}_{\infty}-{A'}_{\!\infty})+(1+\sum\hspace{-0.1mm}{a_k}){N}_{\infty}-d = (3-N+\sum\hspace{-0.1mm}{a_k}){N}_{\infty}$, however it must be equal to 0 since $\prod(1-\lambda_ix)^{a_i}$ is invertible. This means that $\sum{a_k} = N-3$.
\end{proof}
\end{Lem}

In {\sl Case 2}, we set $N'_\infty = {N}_{\infty}+{A'}_{\!\infty}$.
Then Lemma \ref{lem:4.1} holds after replacing $N_j$ by $N'_\infty$
and $j$ by $\infty$.
We similarly obtain:
\begin{enumerate}
\setlength{\itemsep}{0cm}
\renewcommand{\labelenumi}{(\roman{enumi})}
\item if $d \geq g_{\infty}{N'}_{\hspace{-1mm}\infty}{A'}_{\!\infty}-{N'}_{\hspace{-1mm}\infty}-{A'}_{\!\infty} + 1$, then $w^{g_{\infty}-e}dz/z^{d+1} \hspace{-0.3mm}\notin \varOmega[C]_{\Pinf}$ for all $e \in \{1,\ldots,g_{\infty}\}$;
\item if $d_0 = g_{\infty}{N'}_\infty {A'}_{\!\infty}-{N'}_{\hspace{-1mm}\infty}-{A'}_{\!\infty},\,e_0=2n_{\infty}-m_{\infty}~({\rm mod}\ g_{\infty})$, then $w^{g_{\infty}-e_0}dz/z^{d_0+1} \hspace{-0.3mm}\in \varOmega[C]_{\Pinf}$;
\item the differential form $w^{g_{\infty}-e_0}dz/z^{d_0+1}$ in (ii) is a generator of $\varOmega[C]_{P_\infty}$.
\end{enumerate}
%If $w^{g_{\sinfty}-e}dz/z^{d+1} \hspace{-0.3mm}\in \varOmega(X)$ is equal to (3.3), the polynomial $\prod(1-\lambda_iu)^{a_i}$ is equal to\vspace{-1mm}
%\[
%	\frac{m_{\infty}u(\partial{f_{\infty}}/\partial{u})(u)-g_{\infty}w^{g_{\sinfty}}}{N} \cdot w^{-s(m_{\sinfty}-n_{\sinfty})+(1+\sum\hspace{-0.1mm}{a_k})m_{\sinfty}-e}z^{-s({N}_{\sinfty}+{A'}_{\!\sinfty})+(1+\sum\hspace{-0.1mm}{a_k}){N}_{\sinfty}-d}.\vspace{-1mm}
%\]
%Recall that equations $u^{{A'}_{\!\sinfty}} = v^{N_{\sinfty}}w,\,z^{{A'}_{\!\sinfty}} = vw^{n_{\sinfty}}$, then we obtain the relation $u = w^{m_{\sinfty}}z^{{N}_{\sinfty}}$ easily. By using this relation, then we have $s = -dm_{\infty}+e{N}_{\infty}$.

\begin{Lem}
%{\color{red}
In {\sl Case 2}, the pull-back $\pi^{*}\omega_{(2-A_{\sinfty},\bm{a})}$ is a generator of $\varOmega[C]_{\Pinf}$
if $\sum{a_k} = 0$.
%} \vspace{-1mm}
\begin{proof}
By \eqref{Eq:3.3}, up to multiple of an element of ${\mathcal O}^\times_{C,P_\infty}$,
the pull-back $\pi^*\omega_{(2-A_{\sinfty},\bm{a})}$ with $\sum {a_k} = 0$
is equal to
\[
w^{g_\infty - (2-A_\infty)n_\infty + (1-A_\infty)m_\infty}
z^{(2-A_\infty)(N_\infty + A'_\infty) - N_\infty -1}dz.
\]
This is of the same form as in (ii) above by $A_\infty=g_\infty A'_\infty$
and $N'_\infty = N_\infty + A'_\infty$.
%We set $d = g_{\infty}{N'}_{\hspace{-1mm}\infty}{A'}_{\!\infty}-{N'}_{\hspace{-1mm}\infty}-{A'}_{\!\infty}$ and $e=n_{\infty}+(A_{\infty}-1)(m_{\infty}-n_{\infty})$ with ${N'}_{\hspace{-1mm}\infty} = {N}_{\infty}+{A'}_{\!\infty}$, thus $s = -dm_{\infty}+e{N}_{\infty} = 2-A_{\infty}$. This may not satisfy $0 \leq s < N$, then use $y^N = f(x)$ if necessary. By substituting them, one can easily check $-s({N}_{\infty}+{A'}_{\!\infty})+(1+\sum\hspace{-0.1mm}{a_k}){N}_{\infty}-d = \sum\hspace{-0.1mm}{a_k}{N}_{\infty}$, however it must be equal to 0 since $\prod(1-\lambda_iu)^{a_i}$ is invertible. This means that $\sum{a_k} = 0$.
\end{proof}
\end{Lem}
We obtain the regularity of some rational differential forms on $C$,
which will turn out to make a basis of the space of
regular differential forms on $C$ (cf.\ Corollary \ref{CorollarySection4}).
\begin{Thm}\label{Theorem-C}
%We have the following equivalence:
\begin{enumerate}
\item Assume $N - \sum{A_k} \geq 0$. Then, for $0\leq s \le N-1$,
we have $\pi^{*}\omega_{(s,\bm{a})} \in \varOmega[C]$  if
\begin{enumerate}
\item[(i)] $a_j \geq 0$ for all $j \in \{0,\ldots, r\}$ and
\item[(ii)] $0 \leq \sum{a_k} \leq s-2$.
\end{enumerate}
\item Assume $N - \sum{A_k} < 0$.
Then, for $2-A_\infty\le s \le N-1$, we have
$\pi^{*}\omega_{(s,\bm{a})} \in \varOmega[C]$ if
\begin{enumerate}
\item[(i)] $a_j \geq 0$ for all $j \in \{0,\ldots, r\}$ and
\item[(ii)] $0 \leq \sum {a_k} \leq s-2+A_{\infty}$.
\end{enumerate}
\end{enumerate}
\begin{proof}
A differential form $dx/y^{N-1}$ and its products of some $x, y$ are regular at $P_j$ for $j \in \{0,\ldots,r\}$ by Lemma 4.2. 
(1)
In {\sl Case 1}, the differential form $\omega_{(N-1,\bm{a}')}$ for $\sum{a'_k} = N-3$ and its products of some $1/x, y/x$ are regular at $P_{\infty}$ by Lemma 4.3. The theorem in this case follows from the fact that
$\pi^{*}\omega_{(s,\bm{a})}$ for $\bm{a}$ satisfying (i) and (ii)
is a linear combination of
\[
\left(\frac{1}{x}\right)^i\left(\frac{y}{x}\right)^{N-1-s}\frac{x^{N-3}dx}{y^{N-1}}
\]
for $0\le i\le s-2$.
In {\sl Case 3}, recall that $P_{\infty}$ are nonsingular points by Lemma 2.5. Then $\varOmega[C]_{\Pinf}$ is equal to the set of differential forms which is regular at $Q_{\infty} \in \pi^{-1}(\{P_{\infty}\})$. So this case follows from (3.4).

(2) In {\sl Case 2}, then a differential form $\omega_{(2-A_{\sinfty}, \bm{a}')}$ where $\sum a'_k = 0$ and its products of some $1/y, x/y$ are regular at $P_{\infty}$ by Lemma 4.4. The theorem in this case follows from the fact that
$\pi^{*}\omega_{(s,\bm{a})}$ for $\bm{a}$ satisfying (i) and (ii)
is a linear combination of
\[
\left(\frac{1}{y}\right)^i\left(\frac{x}{y}\right)^{s-2+A_\infty-i} \frac{dx}{y^{2-A_\infty}}
%= x^{s-2+A_\infty-i} y^{N-1-s}\frac{dx}{y^{N-1}}
\]
for $0 \le i \le s-2+A_\infty$.
%And conversely, the differential forms which are regular at $P_{\infty}$ can be written like this.
%After all, we see that the main theorem is true in each case.
\end{proof}
\end{Thm}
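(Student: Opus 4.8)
The plan is to use the intersection description $\varOmega[C]=\bigcap_{P\in C}\varOmega[C]_P$ and to verify $\pi^{*}\omega_{(s,\bm a)}\in\varOmega[C]_P$ separately at each type of point, throughout exploiting that $\varOmega[C]_P$ is an $\mathcal O_{C,P}$-module. At a point $P\notin\mathcal P$ the curve is nonsingular, so $\varOmega[C]_P$ is just the forms regular over $P$; since $\prod_{i}(x-\lambda_i)^{a_i}$ is a polynomial and $y$ vanishes only above the points $\lambda_i$, the form $\omega_{(s,\bm a)}$ with every $a_j\ge 0$ is regular there and imposes no condition. In \textsl{Case 3} the infinite points are nonsingular by Lemma 2.5, so the only requirement they add is the regularity computed in Proposition \ref{Prop_X_infinite}, namely $\sum a_k\le s-2$, which is exactly hypothesis (ii).

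Next I would dispose of the finite singular points $P_j$. By Lemma \ref{Lem:C-FinitePlace} the module $\varOmega[C]_{P_j}$ is generated over $\mathcal O_{C,P_j}$ by $\pi^{*}(dx/y^{N-1})$, so it suffices to factor
\[
	\omega_{(s,\bm a)}=\Bigl(\prod_{i=0}^r(x-\lambda_i)^{a_i}\,y^{\,N-1-s}\Bigr)\frac{dx}{y^{N-1}}
\]
and check that the parenthesised function is regular at $P_j$: each factor $(x-\lambda_i)^{a_i}$ with $i\neq j$ is a unit there, $(x-\lambda_j)^{a_j}$ is regular because $a_j\ge 0$, and $y^{\,N-1-s}$ is regular because $s\le N-1$. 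Hence hypothesis (i) together with the range of $s$ already forces $\pi^{*}\omega_{(s,\bm a)}\in\varOmega[C]_{P_j}$ for all $j$, in both parts of the theorem.

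The remaining and most delicate step is membership at $P_\infty$ in \textsl{Cases 1} and \textsl{2}, where $C$ is singular and one must combine the explicit generator with the functions regular at $P_\infty$. In \textsl{Case 1} the coordinates $1/x$ and $y/x$ are regular at $P_\infty=(0:1:0)$ and a generator of $\varOmega[C]_{\Pinf}$ may be taken to be $x^{N-3}dx/y^{N-1}$ (the element of the lemma for the infinite place, up to a unit at $P_\infty$); the identity
\[
	\Bigl(\tfrac1x\Bigr)^{i}\Bigl(\tfrac{y}{x}\Bigr)^{N-1-s}\frac{x^{N-3}dx}{y^{N-1}}=x^{\,s-2-i}\,\frac{dx}{y^{s}}
\]
shows that, as $i$ runs over $0\le i\le s-2$, these products exhaust $x^{m}dx/y^{s}$ for $0\le m\le s-2$, whose linear span contains each $\omega_{(s,\bm a)}$ with $\sum a_k\le s-2$. \textsl{Case 2} is parallel: at $P_\infty=(0:0:1)$ the regular coordinates are $1/y$ and $x/y$, a generator is $dx/y^{2-A_\infty}$, and
\[
	\Bigl(\tfrac1y\Bigr)^{i}\Bigl(\tfrac{x}{y}\Bigr)^{s-2+A_\infty-i}\frac{dx}{y^{2-A_\infty}}=x^{\,s-2+A_\infty-i}\,\frac{dx}{y^{s}}
\]
realizes every $\omega_{(s,\bm a)}$ with $\sum a_k\le s-2+A_\infty$. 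I expect this last infinite-place analysis to be the main obstacle, for two reasons: one must set up the correct local coordinates and confirm that the monomial products actually land in the module $\varOmega[C]_{\Pinf}$ rather than merely being regular (at a singular point the dualizing module is strictly smaller than the regular forms, and the module property is what guarantees membership), and in \textsl{Case 2} the index $s=2-A_\infty$ may be negative, so the generator carries $y$ in its numerator and the exponents must be reduced using $y^{N}=f(x)$. Once the degree count in $x$ is matched against $\sum a_k$, the bounds in (ii) come out exactly.
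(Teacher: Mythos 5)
Your proposal is correct and takes essentially the same route as the paper's own proof: regularity at the finite singular points $P_j$ via the generator $\pi^*(dx/y^{N-1})$ of Lemma 4.2 together with the $\mathcal{O}_{C,P_j}$-module structure, membership at $P_\infty$ via the generators of Lemmas 4.3 and 4.4 multiplied by the functions $1/x,\,y/x$ (resp.\ $1/y,\,x/y$) regular at $P_\infty$, using the very same identities $x^{s-2-i}\,dx/y^{s}$ and $x^{s-2+A_\infty-i}\,dx/y^{s}$, and Case 3 via nonsingularity and Proposition 3.5. The additional care you take (the unit adjustment at $P_\infty$, the possibly negative exponent $s=2-A_\infty$) only makes explicit details the paper leaves implicit.
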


As an application, we obtain a basis of the regular differential module on $C$. For each $\zeta \in \mu_N$, we have the automorphism of $C$ defined by $(x,y) \mapsto (x, \zeta y)$, which can be extended to an automorphism of $X$, say $\iota_\zeta$. This induces an action of $\mu_N$ on $\varOmega[C]$. Indeed $\iota_\zeta$ stabilizes $P_j$ and induces a permutation of $\pi^{-1}(\{P_j\})$ and we have ${\rm res}_{Q}(\omega) = {\rm res}_{\iota_\zeta(Q)}({\iota_\zeta}^*\omega)$ for each $Q \in \pi^{-1}(\{P_j\})$, thanks to Remark\,\ref{RemarkOnDesing}.

\begin{Cor}\label{CorollarySection4}
Assume that $K$ contains $\mu_N$ and $\{\lambda_0,\ldots,\lambda_r\}$. For $0 \leq s \leq N-1$, let $W_s$ be the subspace of $\varOmega[C]$ consisting of $\omega\in \varOmega[C]$ on which $\mu_N$ acts by $\omega \mapsto \zeta^s\omega$ for all $\zeta\in \mu_N$.
\begin{enumerate}
\item If $N - \sum{A_k} \geq 0$, then
\begin{equation}\label{Basis_C_first}
	\{\pi^*(x^idx/y^s)\,;\,0\le i \le s-2\}
\end{equation}
is a basis of $W_s$. In particular
\begin{equation}
	\dim W_s = s-1
\end{equation}
with $\dim \varOmega[C] = (N-1)(N-2)/2$.
\item If $N - \sum{A_k} < 0$, then 
\begin{equation}\label{Basis_C_second}
	\biggl\{\pi^*(x^iy^{(j-1)N}dx/y^s)\,;\,0 \leq i \leq 
%{\color{red}(s-(j-1)N)-2+A_\infty=}
s-2-jN+\sum{A_k},\ 1 \leq j \leq \left\lfloor\frac{s-2+\sum A_k}{N}\right\rfloor\biggr\}
\end{equation}
is a basis of $W_s$. In particular\vspace{-1mm}
\begin{equation}\label{DimensionFormula_C_second}
	\dim W_s = \sum_{j=1}^{\left\lfloor \frac{s-2+\sum\!A_k}{N}\right\rfloor}{\left(s-1-jN+\sum{A_k}\right)}
\end{equation}
with $\dim \varOmega[C] = (-1+\sum A_k)(-2+\sum A_k)/2$.
\end{enumerate}
\begin{proof}
(1) The differential forms in \eqref{Basis_C_first} belong to $W_s$ by Theorem 4.5\,(1), and they are linear independent. By using \cite[Corollary\,III.9.10]{Hartshorne}, the arithmetic genus does not change among fibers of a flat family over a connected noetherian scheme, and therefore so does the dimension of the space of global sections of the dualizing sheaf by definition \cite[Chapter\,III, \S 7]{Hartshorne}. Consider the family
\[
	y^N = \prod_{j=0}^r \prod_{k=1}^{A_j} (x-\gamma_{jk}z) \cdot \prod_{l=1}^{A_{\infty}}(-a_lx+z)
\]
which has $C$ as a special fiber (defined by $\gamma_{jk}=\lambda_j$ and $a_l=0$). Since its generic fiber has arithmetic genus $(N-1)(N-2)/2 = \sum(s-1)$. This shows that the differential forms in \eqref{Basis_C_first} span $W_s$.

(2) The differential forms in \eqref{Basis_C_second} belong to $W_s$ by Theorem 4.5\,(2), and they are linear independent. In the same way as in (1), using the fact that the projective model of $C$:
\[
	y^Nz^{A_{\infty}} = \prod_{j=1}^{r} (x-\lambda_j z)^{A_j}
\]
deforms to a smooth curve of degree $\sum{A_k}$, we have $\dim \varOmega[C]=(-1+\sum A_k)(-2+\sum A_k)/2$. Hence, to prove \eqref{DimensionFormula_C_second}, it suffices to show that the sum of the right hand side of \eqref{DimensionFormula_C_second} for $s=0,\ldots,N-1$ is equal to $(-1+\sum A_k)(-2+\sum A_k)/2$.
This follows from the next fact. The set
\[
	\biggl\{ s-1-jN+\sum A_k\,;\,0\le s \le N-1, 1\leq j \leq \left\lfloor \frac{s-2+\sum A_k}{N}\right\rfloor \biggr\}
\]
is equal to $\{1,\ldots, -2 + \sum A_k\}$, since $1\leq s-1-jN+\sum A_k \leq -2 + \sum A_k$.
\end{proof}
\end{Cor}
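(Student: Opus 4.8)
The plan is to prove both parts by the same three-step scheme: produce enough explicit regular forms in each eigenspace $W_s$, compute $\dim\Omega[C]$ by a flat deformation of $C$ to a smooth plane curve, and then use semisimplicity of the $\mu_N$-action to force the explicit forms to be a basis. First I would check that every form in the two displayed lists lies in $W_s$. The $\mu_N$-character is a one-line computation: under $(x,y)\mapsto(x,\zeta y)$ the form $x^iy^{(j-1)N}dx/y^s$ is scaled by $\zeta^{(j-1)N-s}=\zeta^{-s}$ (and $x^idx/y^s$ by $\zeta^{-s}$ in Case 1), which is exactly the eigencharacter defining $W_s$. Regularity, i.e.\ membership in $\Omega[C]=\bigcap_P\Omega[C]_P$, is what Theorem \ref{Theorem-C} provides after rewriting $y^{(j-1)N}=f(x)^{j-1}$: concretely each form is the finite-place generator $dx/y^{N-1}$ of Lemma \ref{Lem:C-FinitePlace} times a polynomial in $x,y$ that is regular at each $P_j$, and is the infinite-place generator (Lemma 4.3 in Case 1, Lemma 4.4 in Case 2) times a product of powers of the local units (namely $1/x,y/x$ in Case 1 and $1/y,x/y$ in Case 2, which the proof of Theorem \ref{Theorem-C} makes explicit). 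Since each $\Omega[C]_P$ is an $\mathcal O_{C,P}$-module, the form lies in $\Omega[C]_P$ at every $P\in\mathcal P$ and is pulled back regular elsewhere.

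These forms are linearly independent because the underlying functions $x^i$ (resp.\ $x^iy^{(j-1)N}$) are linearly independent over $K$ in $K(C)$, so I obtain the lower bounds $\dim W_s\ge s-1$ in (1) and $\dim W_s\ge\sum_j\big(s-1-jN+\sum A_k\big)$ in (2). To bound $\dim\Omega[C]$ from the other side I would run the deformation indicated in the statement: realise $C$ as the special fibre of the displayed family of plane curves, whose generic fibre is smooth of degree $N$ (Case 1) or degree $\sum A_k$ (Case 2). The members have constant degree, hence constant Hilbert polynomial, so the family is flat over the connected parameter space; by \cite[Corollary III.9.10]{Hartshorne} the arithmetic genus is constant, and since a plane curve is Gorenstein we have $\Omega[C]=H^0(C,\omega_C)$ and $\dim\Omega[C]=p_a(C)$, equal to the genus $(N-1)(N-2)/2$ (Case 1) or $(\sum A_k-1)(\sum A_k-2)/2$ (Case 2) of the smooth generic fibre.

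Finally, since $N$ is prime to $\operatorname{char}K$ and $\mu_N\subset K$, the group algebra $K[\mu_N]$ is semisimple and $\Omega[C]=\bigoplus_{s=0}^{N-1}W_s$, so $\dim\Omega[C]=\sum_s\dim W_s$. In Case 1 the lower bounds already sum to $\sum_{s=2}^{N-1}(s-1)=(N-1)(N-2)/2=\dim\Omega[C]$, which forces equality termwise; hence the listed forms span each $W_s$ and form a basis. In Case 2 the same forcing applies once I verify the elementary identity $\sum_{s=0}^{N-1}\sum_j\big(s-1-jN+\sum A_k\big)=(\sum A_k-1)(\sum A_k-2)/2$. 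Setting $v=s-1-jN+\sum A_k$, the relation $jN-s=\sum A_k-1-v$ with $0\le s\le N-1$ and $j\ge1$ determines $(s,j)$ uniquely from $v$, so $(s,j)\mapsto v$ is a bijection from the index set onto $\{1,\dots,\sum A_k-2\}$; the double sum therefore collapses to $\sum_{v=1}^{\sum A_k-2}v=(\sum A_k-1)(\sum A_k-2)/2$, matching $\dim\Omega[C]$ and forcing $\dim W_s=\sum_j(s-1-jN+\sum A_k)$ for all $s$.

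The delicate point is the deformation step: one must confirm that the exhibited family is genuinely flat with $C$ among its fibres and with smooth generic fibre of the asserted degree, and that the identification $\dim\Omega[C]=p_a(C)$ through the dualizing sheaf of the Gorenstein curve $C$ (Serre duality) is legitimate. Once $\dim\Omega[C]$ is correctly computed, the eigenspace bookkeeping and the Case 2 bijection are routine.
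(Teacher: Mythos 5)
Your proposal is correct and follows essentially the same route as the paper: membership in $W_s$ via Theorem \ref{Theorem-C}, linear independence, the flat deformation to a smooth plane curve combined with \cite[Corollary III.9.10]{Hartshorne} to compute $\dim \varOmega[C]$, and the dimension-count (with the same bijection onto $\{1,\ldots,\sum A_k-2\}$ in Case 2) to force the exhibited forms to span. The only additions are points the paper leaves implicit, namely the semisimple $\mu_N$-eigenspace decomposition and the Gorenstein identification $\dim\varOmega[C]=p_a(C)$, so there is no substantive difference.
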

\begin{Exp}\label{Example4.7}
Let $C: y^3 = x(x-1)^2(x-z)^2$ and $X$ be the desingularizaiton of $C$.
Then
\[
	\Biggl\{\pi^*(dx),\pi^*\biggl(\frac{dx}{y}\biggr),\pi^*\biggl(\frac{xdx}{y}\biggr),\pi^*\biggl(\frac{dx}{y^2}\biggr),\pi^*\biggl(\frac{xdx}{y^2}\biggr),\pi^*\biggl(\frac{x^2dx}{y^2}\biggr)\Biggr\}
\]
is a basis of $\varOmega[C]$, while $\pi^*(dx/y),\pi^*(x^2dx/y^2) \in \varOmega[X]$.
\end{Exp}
\section{The space of regular differential forms on $\widetilde{C}$}
\setcounter{equation}{0}
One can consider partial desingularizations of $C$, i.e.,
the desingularaization only around a subset of $\{P_0,\ldots, P_r,P_\infty\}$.
Avoiding the general setting,
in this section we consider the desingularization $\widetilde{C}$ only around $P_{\infty}$ of the curve $C$, which would be the most interesting case.
We can exclude {\sl Case 3}, since ${\widetilde C} = C$ in {\sl Case 3}.
Our aim is to give an explicit basis of the space of the regular differential forms on $\widetilde{C}$. Now $\widetilde{C}$ is described as below:
\begin{Def}
Let $C$ be a curve associated to Appell-Lauricella hypergeometric series.
%, and we set $f_{\infty}(x) := \prod (1-\lambda_ix)^{A_i}$ except $\lambda_i = 0$ as in Proposition 2.7.
%\begin{itemize}
%\item {\sl Case 1:} $N - \sum\hspace{-0.25mm}A_i > 0$. We define the curve $\widetilde{C}: z^{{N}_{\sinfty}} = xu^{m_{\sinfty}},\,u^{g_{\sinfty}} = f_{\infty}(x)$.
%\item {\sl Case 2:} $N - \sum\hspace{-0.25mm}A_i < 0$. We define the curve $\widetilde{C}: u^{{A'}_{\!\sinfty}} = v^{{N}_{\sinfty}}w,\,z^{{A'}_{\!\sinfty}} = vw^{n_{\sinfty}},\,w^{g_{\sinfty}} = f_{\infty}(u)$.
%\item {\sl Case 3:} $N - \sum\hspace{-0.25mm}A_i = 0$. We define the curve $\widetilde{C}: y^N = f_{\infty}(z)$.
%\end{itemize}
We define $\widetilde{C}$ by gluing $X_\infty$
and $C \smallsetminus \{P_\infty\}$.
%$C - \{P_0,\overset{\overset{j}{\vee}}{\ldots},P_r, P_\infty\}$ for $j=0,\ldots,r$.
%$\widetilde{C} := X_{\infty} \sqcup \pi^{-1}(\{P_0,\ldots,P_r\})$ for each case.
\end{Def}
Next we construct the morphism $\widetilde{\pi}: X \rightarrow \widetilde{C}$ as follows. Recall that $X$ was defined by gluing $X_i$ in Section 2, then it suffices to define the morphism $\widetilde{\pi}_i: X_i \rightarrow \widetilde{C}$ for each $i \in \{0,\ldots,r,\infty\}$.
We define $\widetilde{\pi}_\infty$ to be the inclusion $X_\infty \subset \widetilde{C}$
and $\widetilde{\pi}_i$ for $i=0,\ldots,r$
to be the composition of $\pi_j: X_i\to C \smallsetminus \{P_0,\overset{\overset{i}{\vee}}{\ldots},P_r, P_\infty\}$ and the inclusion $C \smallsetminus \{P_0,\overset{\overset{i}{\vee}}{\ldots},P_r, P_\infty\} \subset C \smallsetminus \{P_\infty\}$.
%Now we can extend $\rho_{\infty}: C\!\smallsetminus\!\{P_0,\ldots,P_r\} \rightarrow X_{\infty}$ defined in Proposition 2.7 to the morphism $\widetilde{\rho}_{\infty}: C \rightarrow \widetilde{C}$, then we define $\widetilde{\pi}_i = \widetilde{\rho}_{\infty} \circ \pi_i$. The construction of the morphism $\widetilde{\pi}: X \rightarrow \widetilde{C}$ is completed, and this is the desingularization map of the curve $\widetilde{C}$. 
Here is a description of the regularity of differential forms and an explicit basis on $\widetilde{C}$.
\begin{Thm}\label{TheoremSection5}
We have 
$\widetilde{\pi}^{*}\omega_{(s,\bm{a})} \in \varOmega[\widetilde{C}]$
if
\begin{enumerate}
\item[(i)] $a_j \geq 0$  for all $j \in \{0,\ldots, r\}$ and
\item[(ii)]  $0 \leq \sum{a_k} \leq \displaystyle\frac{s\sum{A_k} - (N, N-\sum{A_k})}{N}-1$.
\end{enumerate}
\begin{proof}
The fiber $Q_{\infty} \hspace{-0.3mm}\in \pi^{-1}(\{P_{\infty}\})$ are not singular points of $\widetilde{C}$, so $\widetilde{\pi}^{*}\omega_{(s,\bm{a})}$ is an element of $\varOmega[\widetilde{C}]_{\Qinf}$ if and only if $\widetilde{\pi}^{*}\omega_{(s,\bm{a})} \in \varOmega(X)$ is regular at $Q_{\infty}$. Hence, use Proposition \ref{Prop_X_infinite} and Lemma \ref{Lem:C-FinitePlace}.
\end{proof}
\end{Thm}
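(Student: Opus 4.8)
The plan is to reduce the global membership $\widetilde\pi^{*}\omega_{(s,\bm a)}\in\varOmega[\widetilde C]$ to a finite list of purely local conditions and to verify each one using results already in hand. By the definition of the dualizing sheaf (the analogue of \eqref{SerreRegDiffForm} for $\widetilde C$), we have $\varOmega[\widetilde C]=\bigcap_{P}\varOmega[\widetilde C]_{P}$, the intersection ranging over the points $P$ of $\widetilde C$, each local piece being cut out by Serre's residue conditions. Since $\widetilde\pi^{*}\omega_{(s,\bm a)}$ is visibly a rational differential form on $X$, it suffices to check membership in $\varOmega[\widetilde C]_{P}$ separately over $P_\infty$ and at each of the finite points $P_0,\ldots,P_r$, assuming throughout that $0\le s\le N-1$ so that $\omega_{(s,\bm a)}$ is of the form in Lemma 3.3.

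First I would dispose of the place at infinity. By construction $\widetilde C$ is glued from $X_\infty$ and $C\smallsetminus\{P_\infty\}$, and $X_\infty$ is nonsingular by Proposition 2.7, so the fibre $Q_\infty\in\pi^{-1}(\{P_\infty\})$ consists of smooth points of $\widetilde C$. At such a point the Serre residue pairing degenerates: there is a single branch through $Q_\infty$, and requiring all residues of $\widetilde\pi^{*}(h)\omega$ to vanish for $h\in\mathcal O_{\widetilde C,Q_\infty}$ is equivalent to $\omega$ having no pole at $Q_\infty$ (a pole would produce a nonzero residue after multiplying by a suitable $h$). Hence $\widetilde\pi^{*}\omega_{(s,\bm a)}\in\varOmega[\widetilde C]_{Q_\infty}$ precisely when it is regular there, which is exactly Proposition \ref{Prop_X_infinite}; this delivers condition (ii).

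Next I would treat the finite points. Since we desingularized only at infinity, $\widetilde C$ coincides with $C$ near each $P_j$, so $\varOmega[\widetilde C]_{P_j}=\varOmega[C]_{P_j}$ and I may invoke Lemma \ref{Lem:C-FinitePlace} directly: it exhibits $\pi^{*}(dx/y^{N-1})$ as a generator of the $\mathcal O_{C,P_j}$-module $\varOmega[C]_{P_j}$. The idea is then to write, as rational forms, $\omega_{(s,\bm a)}=h\cdot(dx/y^{N-1})$ with $h=\prod_{i=0}^r(x-\lambda_i)^{a_i}\,y^{N-1-s}$, and to observe that when $a_i\ge0$ for all $i$ (in particular $a_j\ge0$) and $s\le N-1$, the function $h$ is a product of nonnegative powers of functions regular at $P_j$, hence $h\in\mathcal O_{C,P_j}$. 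Because $\varOmega[C]_{P_j}$ is closed under multiplication by $\pi^{*}(\mathcal O_{C,P_j})$, pulling back gives $\pi^{*}\omega_{(s,\bm a)}=\pi^{*}(h)\cdot\pi^{*}(dx/y^{N-1})\in\varOmega[C]_{P_j}=\varOmega[\widetilde C]_{P_j}$; this is condition (i). Intersecting the infinite and finite conditions then places the form in $\varOmega[\widetilde C]$.

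The step I expect to carry the real weight is the reduction at infinity: one must argue carefully that smoothness of $\widetilde C$ along $Q_\infty$ collapses Serre's residue condition to ordinary regularity, so that Proposition \ref{Prop_X_infinite} can be applied verbatim. By contrast the finite-place argument is essentially bookkeeping once Lemma \ref{Lem:C-FinitePlace} is available, provided one checks that the auxiliary function $h$ genuinely lies in $\mathcal O_{C,P_j}$.
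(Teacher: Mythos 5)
Your proposal is correct and follows essentially the same route as the paper: the paper's (terse) proof likewise reduces to local conditions, invokes smoothness of $\widetilde{C}$ along $\pi^{-1}(\{P_\infty\})$ to collapse Serre's residue condition to ordinary regularity (hence Proposition \ref{Prop_X_infinite} for condition (ii)), and handles the finite points via the generator $\pi^{*}(dx/y^{N-1})$ of $\varOmega[C]_{P_j}$ from Lemma \ref{Lem:C-FinitePlace} together with the $\mathcal{O}_{C,P_j}$-module structure (condition (i)). You have merely written out explicitly the factorization $\omega_{(s,\bm{a})}=h\cdot(dx/y^{N-1})$ with $h\in\mathcal{O}_{C,P_j}$ that the paper leaves implicit.
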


Assume that $K$ contains $\mu_N$.
As with the case of $\varOmega[C]$ in Section 4, the automorphism $(x,y) \mapsto (x, \zeta y)$ for $\zeta \in \mu_N$ induces an action of $\mu_N$ on $\varOmega[\widetilde{C}]$. Let $\widetilde{W}_s$ be the subspace of $\varOmega[\widetilde{C}]$ consisting of $\omega\in \varOmega[\widetilde{C}]$ on which $\mu_N$ acts by $\omega \mapsto \zeta^s\omega$ for all $\zeta\in \mu_N$.
\begin{Cor}\label{cor:basis_for_tilde_C}
For $0\le s \le N-1$, the set of $\widetilde\pi^*\omega_{s,j}$ with
\begin{equation}\label{DiffFormOfCor5.3}
	\omega_{s,j} := \frac{x^{j-1}dx}{y^s},\quad 1 \leq j \leq \frac{s\sum{A_k} - (N,N-\sum{A_k})}{N}
\end{equation}
is a basis of $\widetilde{W}_s$. In particular
\[
	\dim \widetilde{W}_s = \max\left\{0, \left\lfloor\frac{s\sum{A_k} - (N,N-\sum{A_k})}{N}\right\rfloor\right\}.
\]
\begin{proof}
By Theorem \ref{TheoremSection5}, the differential forms
$\omega_{s,j}$ in \eqref{DiffFormOfCor5.3} belong to $\widetilde{W}_s$ and they are linearly independent obviously. Let $n=\sum A_k$ and $\mathcal{H}$ be the family
\[
	y^N = \prod_{j=0}^r \prod_{k=1}^{A_j} (x-\gamma_{jk})
\]
where $(\gamma_{jk}) \in \mathbb{A}^n$. Let $\widetilde{\mathcal{H}}$ be the familiy over $\mathbb{A}^n$ obtained as the fiberwise desingularization only at $\infty$ of $\mathcal{H}$. We remark that $\varOmega[\widetilde{C}]$ is the space of global secions of the dualizing sheaf of $\widetilde{C}$. By the similar way as in Corollary \ref{CorollarySection4}, the dimension of $\varOmega[\widetilde{\mathcal{H}}]$ is equal to $\dim\varOmega[{\widetilde{{\mathcal{H}}}}(t)]$ for a smooth fiber $\widetilde{\mathcal{H}}(t)$ with $t \in \mathbb{A}^n$ of $\widetilde{\mathcal{H}}$. And $\dim\varOmega[{\widetilde{{\mathcal{H}}}}(t)]$ is equal to the dimension we have computed in Theorem \ref{TheoremSection3} for $A_j=1$ for $j=1,\ldots,r$, which is 
\[
	\sum_{s=0}^{N-1} \max\left\{0, \left\lfloor\frac{s\sum{A_k} - (N,N-\sum{A_k})}{N}\right\rfloor\right\}.
\]
This implies that $\{\omega_{s,i}\}$ has to span $\widetilde{W}_s$.
\end{proof}
\end{Cor}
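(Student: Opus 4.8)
The plan is to separate the two assertions — that the listed forms lie in $\widetilde W_s$ and are independent, and that they exhaust $\widetilde W_s$ — and to reduce the second (the only nontrivial point) to the flat-deformation argument already used for $\varOmega[C]$ in Corollary \ref{CorollarySection4}. First I would verify membership and independence. Since $x^{i-1}$ is a $K$-linear combination of the products $\prod_{k}(x-\lambda_k)^{a_k}$ of total degree $\le i-1$, the hypothesis $i \le \frac{s\sum{A_k}-(N,N-\sum{A_k})}{N}$ forces $\sum a_k \le \frac{s\sum{A_k}-(N,N-\sum{A_k})}{N}-1$, so Theorem \ref{TheoremSection5} gives $\widetilde\pi^{*}\omega_{s,i}\in\varOmega[\widetilde C]$; these clearly lie in the $\zeta^s$-eigenspace $\widetilde W_s$, and are linearly independent because the $x^{i-1}$ have distinct degrees. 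Hence $\dim\widetilde W_s \ge \max\{0,\lfloor (s\sum{A_k}-(N,N-\sum{A_k}))/N\rfloor\}$, and it remains only to prove equality.

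For the spanning I would run the family argument. Put $n=\sum A_k$ and let $\mathcal H: y^N=\prod_{j=0}^r\prod_{k=1}^{A_j}(x-\gamma_{jk})$ over $\mathbb A^n$, so that $\widetilde C$ is the fibre at $\gamma_{jk}=\lambda_j$ of the family $\widetilde{\mathcal H}$ obtained by the fibrewise desingularization only at $\infty$. Because $\varOmega[\widetilde C]$ is the space of global sections of the dualizing sheaf, Serre duality identifies $\dim\varOmega[\widetilde C]$ with the arithmetic genus, which is constant across a flat family by \cite[Corollary\,III.9.10]{Hartshorne}; thus $\dim\varOmega[\widetilde C]=\dim\varOmega[\widetilde{\mathcal H}(t)]$ for a generic $t$, exactly as in Corollary \ref{CorollarySection4}. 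For generic $t$ all the $\gamma_{jk}$ are distinct, so every finite branch point has multiplicity one and is already nonsingular by Lemma~2.5; hence $\widetilde{\mathcal H}(t)$ is the full nonsingular model, and $\dim\varOmega[\widetilde{\mathcal H}(t)]$ is computed by Theorem \ref{TheoremSection3}.

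The key simplification at the generic fibre is that, with $A_j=1$ for every finite point, the finite contributions $e_{s,j}=\lceil (s+1)/N-1\rceil$ vanish for $0\le s\le N-1$ (since $(s+1-N)/N\in(-1,0]$), so that $d_s=\max\{0,\lfloor (sn-(N,N-n))/N\rfloor\}$. Summing over $s$ gives
\[
	\dim\varOmega[\widetilde C]=\sum_{s=0}^{N-1}\max\left\{0,\left\lfloor\frac{s\sum{A_k}-(N,N-\sum{A_k})}{N}\right\rfloor\right\},
\]
which is precisely the total number of forms $\widetilde\pi^{*}\omega_{s,i}$. Combined with the graded lower bounds above and the eigenspace decomposition $\varOmega[\widetilde C]=\bigoplus_s\widetilde W_s$, the equality $\sum_s(\dim\widetilde W_s-\max\{0,\lfloor\cdots\rfloor\})=0$ with nonnegative summands forces equality in each graded piece, yielding both the basis and the dimension formula.

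The main obstacle I expect is the geometric input into the family argument rather than the final bookkeeping: one must check that $\widetilde{\mathcal H}$ is flat over $\mathbb A^n$, so that the arithmetic genus (and hence $\dim\varOmega$) is genuinely locally constant, and that the relative desingularization at $\infty$ really does specialize to $\widetilde C$ at the central fibre $\gamma_{jk}=\lambda_j$. Once these are granted — as in the treatment of Corollary \ref{CorollarySection4} — the ceiling computation showing $e_{s,j}=0$ and the comparison of dimensions are routine.
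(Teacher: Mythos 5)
Your proposal is correct and follows essentially the same route as the paper's own proof: membership and independence via Theorem \ref{TheoremSection5}, then spanning by deforming to the family $\mathcal{H}$, invoking constancy of the arithmetic genus (hence of $\dim\varOmega$) in the flat family, and computing the generic smooth fiber's dimension by Theorem \ref{TheoremSection3} with all finite exponents equal to $1$. Your added details (the check that $e_{s,j}=0$ when $A_j=1$, the eigenspace bookkeeping forcing equality gradedly, and the flatness caveat) are exactly the points the paper leaves implicit.
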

\begin{Exp}\label{Example5.4}
We consider the curve $C: y^3 = x(x-1)^2(x-z)^2$ in Example \ref{Example4.7}, then
\[
	\Biggl\{\pi^*\biggl(\frac{dx}{y}\biggr),\pi^*\biggl(\frac{dx}{y^2}\biggr),\pi^*\biggl(\frac{xdx}{y^2}\biggr),\pi^*\biggl(\frac{x^2dx}{y^2}\biggr)\Biggr\}
\]
is a basis of $\varOmega[\widetilde{C}]$.
\end{Exp}
\section{The modified Cartier operator on the regular differential module}
\setcounter{equation}{0}
In this section, we assume that $K$ is a perfect field of characteristic $p > 0$. Let $C$ be the projective model of
\[
y^N=f(x) = x^{A_0}(x-\lambda_1)^{A_1}\cdots (x-\lambda_r)^{A_r},
\]
i.e., a curve associated to Appell-Lauricella hypergeometric series as in Definition 2.1. We introduce the (modified) Cartier operator
 on the regular differential modules on $X$, $C$ and $\widetilde C$, studied in the previous sections
and describe it in terms of Appell-Lauricella hypergerometric series,
where $X$ is the desingularization of $C$ and $\widetilde{C}$ is the partial desingularization of $C$ only at $\infty$ (Definition 5.1).

We start with recalling \cite[Definition 2.1]{Yui} the definition of the modified Cartied operator $\mathcal{C}'$ on the space $\varOmega(C)$
of rational differential forms on $C$.
\begin{Def}
For all $\omega \in \varOmega(C)$, there exist $\phi, \eta \in K(x,y)$ such that $\eta^p \in K(x^p,y^p)$ and
\[
	\omega = d\phi + \eta^px^{p-1}dx.
\]
Then, the modified Cartier operator $\mathcal{C}': \varOmega(C) \longrightarrow \varOmega(C)$ is defined as $\mathcal{C}'(\omega) = \eta dx$.
\end{Def}

It is well-known that the modified Cartier operator $\mathcal{C}'$ stabilizes
$\varOmega[X]$. The next theorem says that this also holds for
$\varOmega[C]$ and $\varOmega[\widetilde{C}]$.
\begin{Thm}\label{CartierOperatorOnC}
The modified Cartier operator $\mathcal{C}'$ stabilizes $\varOmega[C]$ and $\varOmega[\tilde C]$.
More generally, $\mathcal{C}'$ stabilizes $\varOmega[C]_P$ for any closed point $P\in C$,
where $\varOmega[C]_P$ is as defined in \eqref{SerreRegDiffForm}.
\end{Thm}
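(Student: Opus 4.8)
The plan is to reduce the assertion to the local residue conditions defining the spaces $\varOmega[C]_P$ and to play them off against two structural properties of $\mathcal{C}'$: its $p^{-1}$-semilinearity and its compatibility with residues. First I would record semilinearity. If $\omega=d\phi+\eta^p x^{p-1}dx$ is the decomposition of Definition 6.1 and $g\in K(x,y)$, then $g^p\omega=d(g^p\phi)+(g\eta)^p x^{p-1}dx$, since $g^p\,d\phi=d(g^p\phi)$ because $d(g^p)=0$; hence
\[
\mathcal{C}'(g^p\omega)=g\,\mathcal{C}'(\omega).
\]

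Next I would establish the residue compatibility: for every closed point $Q$ of $X$ and every $\omega\in\varOmega(X)$,
\[
{\rm res}_Q(\mathcal{C}'\omega)=\bigl({\rm res}_Q\omega\bigr)^{1/p},
\]
the $p$-th root being taken in the perfect residue field $k(Q)$. This is the one genuinely analytic input, and I would obtain it by expanding $\omega=\bigl(\sum_n c_n t^n\bigr)dt$ in a separating local uniformizer $t$ at $Q$: since $\mathcal{C}'$ coincides with the intrinsic Cartier operator (both extract the coefficient of $x^{p-1}dx$ in the $L^p$-basis $1,x,\dots,x^{p-1}$ of $L=K(x,y)$), it sends this form to $\bigl(\sum_m c_{(m+1)p-1}^{1/p}t^m\bigr)dt$, and reading off the coefficient of $t^{-1}$ (the case $m=-1$) gives exactly $c_{-1}^{1/p}$.

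With these in hand the main argument is short. Fix a closed point $P\in C$ and $\omega\in\varOmega[C]_P$; I must check that $\mathcal{C}'\omega$ again kills all residue sums, i.e.\ that $\sum_{\pi(Q)=P}{\rm res}_Q\bigl(\pi^*(h)\,\mathcal{C}'\omega\bigr)=0$ for every $h\in\mathcal{O}_{C,P}$. The decisive step is to replace $h$ by $h^p$, which again lies in $\mathcal{O}_{C,P}$: by semilinearity $\pi^*(h)\,\mathcal{C}'\omega=\mathcal{C}'\bigl(\pi^*(h^p)\,\omega\bigr)$, so applying the residue formula termwise and using that the inverse Frobenius $c\mapsto c^{1/p}$ is additive in characteristic $p$ (and commutes with the trace maps ${\rm Tr}_{k(Q)/k(P)}$, should one use Serre's trace form, all residue fields being perfect and finite over $K$) one pulls the $p$-th root outside the sum:
\[
\sum_{\pi(Q)=P}{\rm res}_Q\bigl(\mathcal{C}'(\pi^*(h^p)\,\omega)\bigr)=\Bigl(\sum_{\pi(Q)=P}{\rm res}_Q\bigl(\pi^*(h^p)\,\omega\bigr)\Bigr)^{1/p}.
\]
The inner sum vanishes because $h^p\in\mathcal{O}_{C,P}$ and $\omega\in\varOmega[C]_P$, so the whole expression equals $0^{1/p}=0$. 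Thus $\mathcal{C}'\omega\in\varOmega[C]_P$, and intersecting over all $P$ gives $\mathcal{C}'\varOmega[C]\subseteq\varOmega[C]$. The identical computation with $\widetilde{\pi}\colon X\to\widetilde{C}$ in place of $\pi$ handles $\varOmega[\widetilde{C}]$; at the points of $\widetilde{C}$ that are already smooth (the fibre over $\infty$) the condition is mere regularity at $Q_\infty$, where the claim is the classical fact that $\mathcal{C}'$ preserves $\varOmega[X]$.

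I expect the only real obstacle to be the residue compatibility formula, which has to be invoked for forms with arbitrary poles rather than merely regular ones and for the modified operator $\mathcal{C}'$ specifically; the point is to verify that $\mathcal{C}'$ agrees with the intrinsic Cartier operator, after which the local expansion makes the formula routine. Everything else—the semilinearity trick $h\rightsquigarrow h^p$ and the additivity of $c\mapsto c^{1/p}$—is formal.
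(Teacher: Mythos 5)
Your proof is correct and is essentially the paper's own argument: the paper likewise takes an arbitrary $h\in\mathcal{O}_{C,P}$, multiplies $\omega = d\phi + \eta^p x^{p-1}dx$ by $\pi^*(h^p)$, and combines the residue identity $\bigl({\rm res}_Q(g\,dx)\bigr)^p = {\rm res}_Q\bigl(g^p x^{p-1}dx\bigr)$ with additivity of Frobenius over the fiber $\pi^{-1}(P)$ to conclude $\sum_{\pi(Q)=P}{\rm res}_Q\bigl(\pi^*(h)\,\mathcal{C}'\omega\bigr)=0$. The only difference is presentational: you package the key input as $p^{-1}$-semilinearity plus ${\rm res}_Q(\mathcal{C}'\omega)=({\rm res}_Q\,\omega)^{1/p}$ and pull $p$-th roots out of the sum, whereas the paper raises the residue sum to the $p$-th power; the mathematical content is identical.
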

\begin{proof}
It suffices to show the second assertion.
Let $\omega\in \varOmega[C]_P$. Write
\begin{equation}\label{writing_omega}
\omega = d\phi + \eta^p x^{p-1} dx.
\end{equation}
Let $h$ be an arbitrary element of ${\mathcal O}_{C,P}$. 
Mutiplying \eqref{writing_omega} by $\pi^*(h^p)$, we get
\[
\pi^*(h^p) \omega = d(\pi^*(h^p)\phi) + (\pi^*(h)\eta)^p x^{p-1} dx.
\]
We have
\[
\left(\sum_{\pi(Q)=P} {\rm res}_Q(\pi^*(h)\eta dx)\right)^p
= \sum_{\pi(Q)=P} {\rm res}_Q((\pi^*(h)\eta)^p x^{p-1} dx)
= \sum_{\pi(Q)=P} {\rm res}_Q(\pi^*(h^p) \omega).
\]
The right hand side is zero by $\omega \in \varOmega[C]_P$.
Hence, $\sum_{\pi(Q)=P} {\rm res}_Q(\pi^*(h)\eta dx) = 0$ and therefore
$\mathcal{C}'(\omega) = \eta dx \in \varOmega[C]_P$.
\end{proof}

\begin{Def}
{\it The Cartier-Manin matrix} $A$ of $X$ (resp. $C$ or $\widetilde C$) with respect to a basis $\{\xi_i\}$ of
$\varOmega[X]$ (resp. $\varOmega[C]$ or $\varOmega[\widetilde C]$)
is given by $A=(a_{ij})$ with
${\mathcal C}' \xi_j = \sum_{i} a_{ij}^{1/p} \xi_i$.
%\[
%	A= \left(
%	\begin{array}{cccc}
%	\gamma_{s,p-1} & \gamma_{s,p-2} & \cdots & \gamma_{s,p-q}\\
%	\gamma_{s,2p-1} & \gamma_{s,2p-2} & \cdots & \gamma_{s,2p-q}\\
%	\vdots & \vdots  & \ddots & \vdots\\
%	\gamma_{s,qp-1} & \gamma_{s,qp-2} & \cdots & \gamma_{s,qp-q}
%	\end{array}
%	\right), \quad q = \left\lfloor\frac{s\sum{A_k} - (N,N-\sum{A_k})}{N}\right\rfloor.
%\]
\end{Def}

In order to describe a Cartier matrix (i.e, the modified Cartier operator $\mathcal{C}'$ on $\varOmega[X]$, $\varOmega[C]$ or $\varOmega[\tilde C]$), it suffices to describe ${\mathcal C}'$ on 
\[
\omega_{s,j} = \frac{x^{j-1}}{y^s}\ dx
\]
for $0\le s \le N-1$,
since our basis of the space of regular differential forms (obtained in the previous sections) is given by
linear combinations of $\omega_{s,j}$.
We shall see that it can be described in terms of the Appell-Lauricella hypergerometric series.
Recall $p\nmid N$.
Hence, for $1 \le s \le N-1$ there uniquely exist integers $m'_s$ and $n'_s$ with
$1 \le m'_s \le N-1$ and $0\le n'_s < p$ such that
\begin{equation}
{m'_s}p-{n'_s}N = s
\end{equation}
by Lemma B.5.
We rewrite $\omega_{s,j}$ as
\[
	\omega_{s,j} = y^{-s}x^{j-1}dx = y^{-m'_s p}x^{j-1}y^{m'_s p-s}dx = {(y^{m'_s})}^{-p}x^{j-1}{f(x)}^{n'_s}dx.
\]
Let $\gamma_{s,e}$ be the coefficient of $x^e$ in the polynomial $f(x)^{n'_s}$, namely
\begin{equation}
	{f(x)}^{n'_s} = \sum_{e=0}^{n'_s\deg(f)} \gamma_{s,e} x^e.
\end{equation}
Now we have
\begin{align}\label{FundamentalEquationForCartierOperator}
	\omega_{s,j} &= (y^{m'_s})^{-p}\hspace{-2mm}\hbox{$\displaystyle\sum_{j+e \not\equiv 0\,({\rm mod}\,p)}\hspace{-6mm}\gamma_{s,e}x^{j+e-1}dx$} + \sum_{l} \gamma_{s,(l+1)p-j} \frac{x^{(l+1)p}}{y^{m'_s p}}\frac{dx}{x}\notag\\
	&= d\biggl(y^{-m'_s p}\raise1ex\hbox{$\displaystyle\sum_{j+e \not\equiv 0\,({\rm mod}\,p)}$}\frac{\gamma_{s,e}x^{j+e}}{j+e}\biggr) + \sum_{l} \gamma_{s,(l+1)p-j} \frac{x^{lp}}{y^{m'_s p}}x^{p-1}dx
\end{align}
where $l$ runs from $\displaystyle \left\lceil\frac{j}{p}-1\right\rceil$ to $\displaystyle \left\lfloor\frac{n'_s\deg(f)+j}{p}-1\right\rfloor$.

%The second aim of this section is to describe relations between the modified Cartier operator $\mathcal{C}'$
%and the Appell-Lauricella hypergerometric series.
Let ${\mathcal F}(a,b_2,\ldots,b_r,c\,;\lambda_2,\ldots,\lambda_r)$ be
the Appell-Lauricella hypergeometric series associated to $C: y^N=x^{A_0}(x-1)^{A_1}\prod_{k=2}^r (x-\lambda_k)^{A_k}$, namely
\[
	a = -1 + \sum_{k=0}^r(A_k/N), \quad b_i=A_i/N, \quad c= a + 1-(A_1/N).\vspace{-1mm}
\]

As well as in the case of elliptic curves, we need to 
introduce a truncation of hypergeometric series.

\begin{Def}[Truncation of Appell-Lauricella hypergeometric series]\label{def:truncation}
For $(\sigma, \tau_1,\ldots,\tau_r)\in {\mathbb Z}^{r+1}$, let
\[
{\mathcal F}^{(\sigma; \tau_1,\ldots,\tau_r)}(a,b_2,\ldots,b_r,c\,;\lambda_2,\ldots,\lambda_r)
\]
be the sum of the $\lambda_2^{e_2}\cdots\lambda_r^{e_r}$-terms of ${\mathcal F}(a,b_2,\ldots,b_r,c\,;\lambda_2,\ldots,\lambda_r)$ 
for $(e_2,\ldots,e_r)$ satisfying
$e_j \le \tau_j$ for $j=2,\ldots, r$ and 
\[\sigma-\tau_1 \le \sum_{k=2}^r e_k \le \sigma.
\]
We call it {\it the truncation of  ${\mathcal F}(a,b_2,\ldots,b_r,c\,;\lambda_2,\ldots,\lambda_r)$ with respect to $(\sigma; \tau_1,\ldots,\tau_r)$.}
\end{Def}
%Theorem 6.3 says that Appell-Lauricella hypergeometric series can be written as a linear combination of $\gamma_{i,(l+1)p-j}$.
%If we put
%\[
%F'_D(a,b_i,c;\lambda_i) = \frac{\Gamma(a)\Gamma(c-a)}{\Gamma(c)}{\mathcal F}(a,b_i,c;\lambda_i),
%\]
%then
%\[
%\binom{a-1}{c-1}{\mathcal F}(a,b_i,c;\lambda_i) =\frac{\Gamma(a)}{\Gamma(c)\Gamma(a-c+1)}
%\frac{\Gamma(c)}{\Gamma(a)\Gamma(c-a)}F'_D(a,b_i,c;\lambda_i)
%=\frac{1}{\Gamma(a-c+1)\Gamma(c-a)}F'_D(a,b_i,c;\lambda_i)
%\]
%\begin{Thm}
%Assume that the characteristic of $K$ is strict larger than ${n'_s}A_j$ for all $j \in \{0,\ldots,r\}$ and that $c$ is an integer and
%$ic - j$ is non-negative. We set $a' = ia-j+i$, $b'_k=i b_k$ and $c' = ic-j+1$, then
%\[
%	\sum_{l} (-1)^{c{m'_s}-l-1}\binom{-{m'_s}A_1/N}{c{m'_s}-l-1}\gamma_{i,(l+1)p-j}  = (-1)^{{n'_s}A_1}\binom{a'-1}{c'-1}\lbar{{\mathcal F}}(a',b'_2,\ldots,b'_r,c';\lambda_2,\ldots,\lambda_r).
%\]
%\end{Thm}

Unfortunately, it is not true in general that
one can describe ${\mathcal C}'$ in terms of
the Appell-Lauricella hypergeometric series
${\mathcal F}(a,b_2,\ldots,b_r,c\,;\lambda_2,\ldots,\lambda_r)$
itself of $C$.
We shall see that ${\mathcal C}'$
can be described in terms of Appell-Lauricella hypergeometric series
associated to
a deformation of $f(x)$
which is separable except for the factor of $x$,
see $f_0(x)$ in \eqref{f_0} below for the explicit form.
The description is as follows.
%The next theorem says that the Cartier operator
%is described in terms of Appell-Lauricella hypergeometric series
%associated to $f_0(x)$.
\begin{Thm}\label{MainTheoremGeneralA_k} 
Let $a'$ be a positive rational number with $a'\equiv s\deg(f)/N-j \pmod p$ and
set $c'=a'+1-s/N$ and $d'=n'_s \deg(f) - (l+1)p + j$. For $\left\lceil \frac{j}{p} -1 \right\rceil \leq l \leq \left\lfloor \frac{n'_s \deg(f) + j}{p} -1\right\rfloor$, 
we have
\[
\gamma_{s,(l+1)p-j} =
\frac{(c'; d')}{(a'; d')}
{\mathcal F}^{(d';n'_s,\ldots,n'_s)}(a',\underbrace{s/N,\ldots,s/N}_{-1+\sum_{k\ge 1} A_k},c'; 
\underbrace{1,\ldots,1}_{A_1-1},\underbrace{\lambda_2,\ldots,\lambda_2}_{A_2},\ldots,\underbrace{\lambda_r,\ldots,\lambda_r}_{A_r}),
\]
where the right hand side (a priori belonging to ${\mathbb Q}[\lambda_2,\ldots,\lambda_r]$) is considered as a polynomial over $\mathbb{F}_p$
(Note that the denominator of any coefficient is coprime to $p$).
\end{Thm}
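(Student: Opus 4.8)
By \eqref{FundamentalEquationForCartierOperator} the operator $\mathcal C'$ on the forms $\omega_{s,j}$ is governed by the coefficients $\gamma_{s,(l+1)p-j}$, so the task is purely to compute these coefficients. The plan is to make $\gamma_{s,(l+1)p-j}$ explicit as a polynomial in $\lambda_2,\dots,\lambda_r$ over $\mathbb F_p$ and to match it coefficientwise with the normalized truncation on the right, all computations being carried out modulo $p$.

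First I would expand $\gamma_{s,(l+1)p-j}$, the coefficient of $x^{(l+1)p-j}$ in $f(x)^{n'_s}=\prod_{k=0}^r(x-\lambda_k)^{n'_sA_k}$. Writing each repeated factor as a product of linear factors, the factor $x^{A_0}$ must contribute only powers of $x$ (its root is $0$), so that
\[
\gamma_{s,(l+1)p-j}=\sum_{\substack{(c_\rho)_\rho\\ \sum_\rho c_\rho=d'}}\ \prod_\rho\binom{n'_s}{c_\rho}(-\rho)^{c_\rho},
\]
where $\rho$ runs over the nonzero roots with multiplicity (one distinguished copy of $1$, a further $A_1-1$ copies of $1$, and $A_k$ copies of $\lambda_k$) and $c_\rho\le n'_s$ is automatic. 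Using $\binom{M}{c}(-1)^c=(-M\,;c)/(1\,;c)$ together with $-n'_s\equiv s/N\pmod p$, which comes from $m'_sp-n'_sN=s$, each factor becomes $(s/N\,;c_\rho)/(1\,;c_\rho)$; collapsing the copies of a common root by the Vandermonde identity $\sum_{a+b=c}\frac{(\alpha\,;a)(\beta\,;b)}{(1\,;a)(1\,;b)}=\frac{(\alpha+\beta\,;c)}{(1\,;c)}$ shows that the coefficient of $\lambda_2^{e_2}\cdots\lambda_r^{e_r}$ in $\gamma_{s,(l+1)p-j}$ equals $(-1)^{q}\binom{n'_sA_1}{q}\prod_{k\ge2}(-1)^{e_k}\binom{n'_sA_k}{e_k}$ with $q=d'-\sum_{k\ge2}e_k$.

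Next I would fix the dictionary: for the form $x^{j-1}y^{-s}\,dx$ the exponents in the integral representation of Section 2 give $a'\equiv s\deg(f)/N-j$ and $c'=a'+1-s/N$, so that the root $0$ and one copy of the root $1$ are encoded in the weight $(a'\,;n)/(c'\,;n)$ (here $n$ denotes the total degree in the variables), while the remaining $A_1-1$ copies of $1$ and the $A_k$ copies of $\lambda_k$ are the variables of $\mathcal F$; modulo $p$ the bound $c_\rho\le n'_s$ on each copy is precisely the content of the truncation index $(d';n'_s,\dots,n'_s)$ of Definition \ref{def:truncation}, since $(s/N\,;c)/(1\,;c)\equiv(-1)^c\binom{n'_s}{c}$ vanishes for $c>n'_s$. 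Expanding the right-hand side and collapsing the variable copies by Vandermonde as before, matching the coefficient of a fixed monomial reduces to showing that the prefactor and the weight reconstruct the absorbed distinguished copy of $1$, namely
\[
\frac{(c'\,;d')}{(a'\,;d')}\,\frac{(a'\,;n)}{(c'\,;n)}\ \equiv\ \frac{(s/N\,;d'-n)}{(1\,;d'-n)}\pmod p,\qquad 0\le n\le d'.
\]

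The heart of the argument, and the step I expect to be the main obstacle, is this congruence. Writing the left-hand side as $(c'+n\,;d'-n)/(a'+n\,;d'-n)$, I would prove it by downward induction on $n$ from the trivial case $n=d'$: passing from $n+1$ to $n$ multiplies the left-hand side by $(c'+n)/(a'+n)$ and the right-hand side by $(s/N+d'-n-1)/(d'-n)$, and cross-multiplying reduces the equality of these factors to $(1-s/N)(a'+d')\equiv0\pmod p$. The decisive point is that $a'+d'\equiv0\pmod p$: indeed $d'\equiv n'_s\deg(f)+j$ and $a'\equiv s\deg(f)/N-j$, whence $a'+d'\equiv(s/N+n'_s)\deg(f)\equiv0$ because $s/N\equiv-n'_s$. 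Granting the congruence, the right-hand side supplies the distinguished copy as $(-1)^{d'-n}\binom{n'_s}{d'-n}$, and a final Vandermonde step $\sum_{c_*+m_1=q}\binom{n'_s}{c_*}\binom{(A_1-1)n'_s}{m_1}=\binom{n'_sA_1}{q}$ recombines it with the other $A_1-1$ copies of $1$, reproducing the coefficient of $\gamma_{s,(l+1)p-j}$ found above. Two matters require genuine care: justifying the reductions modulo $p$ when individual factors such as $(a'\,;d')$ or $(c'\,;n)$ have numerator or denominator divisible by $p$, so that the stated equality is one of $p$-integral rationals with poles cancelled by zeros (this is the force of the parenthetical remark), and confirming that the support retained by the truncation in Definition \ref{def:truncation} coincides modulo $p$ with the support of $\gamma_{s,(l+1)p-j}$, i.e. that every term discarded by the bounds $c_\rho\le n'_s$ and by the degree range indeed vanishes in $\mathbb F_p$.
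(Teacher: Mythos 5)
Your plan is correct, and it reaches the theorem by a genuinely more direct route than the paper's. The paper never expands $f(x)^{n'_s}$ directly: it first reduces to the separable case $A_1=\cdots=A_r=1$ by deforming $f$ to the polynomial $f_0$ of \eqref{f_0} and specializing $\lambda_{kt}\mapsto\lambda_k$, and then pins down the $\lambda$-coefficients $\alpha_{(l,\bm{d})}$ of $\gamma_{s,(l+1)p-j}$ \emph{indirectly}, by expanding both sides of the Cartier identity \eqref{FundamentalEquationForCartierOperator} as fractional-power series and comparing coefficients of $x^{pM+p-1-m'_spA_0/N}$ and of the $\lambda$-monomials; the resulting relation \eqref{alpha_and_binomial} collapses to a single term only when all $A_k=1$ (this is exactly why the deformation is needed), after which Lemma \ref{Binomial_vs_Pochhammer} converts the binomials into Pochhammer ratios. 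You instead read off the coefficients of $f(x)^{n'_s}$ from the multinomial expansion --- legitimate, since $\gamma_{s,e}$ is \emph{defined} by that expansion in (6.2) --- and absorb the multiplicities $A_k$ by Vandermonde convolution, so neither the deformation nor the formal series comparison is needed. Your key congruence is the paper's Lemma \ref{Binomial_vs_Pochhammer} in disguise: your downward induction, whose crucial input is $a'+d'\equiv 0 \pmod p$, is a step-by-step version of the telescoping identity
\[
\frac{(c';d')}{(a';d')}\,\frac{(a';n)}{(c';n)}=\frac{(c'+n\,;d'-n)}{(a'+n\,;d'-n)},
\]
which the paper obtains from \eqref{Poch4} together with $a'-c'+1=s/N$ and $d'\equiv -a'\pmod p$. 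Net effect: your argument is more elementary and self-contained and treats all $A_k$ uniformly; the paper's buys a one-line collapse of the combinatorics and stays close to the Cartier-operator computation it has already set up, at the price of the auxiliary family $f_0$.

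One caution: your congruence as stated for all $0\le n\le d'$ claims more than is true. When $d'-n\ge p$ the right-hand side need not be $p$-integral (the denominator $(1\,;d'-n)$ acquires factors of $p$), and your induction step divides by $a'+n\equiv -(d'-n)$ and by $d'-n$, which vanish mod $p$ exactly when $p\mid d'-n$. This costs nothing, because the truncation $(d';n'_s,\ldots,n'_s)$ of Definition \ref{def:truncation} confines the matching to $d'-n\le \tau_1=n'_s<p$, and in that range every multiplier in your induction is a $p$-unit --- this is precisely the paper's closing remark in Lemma \ref{Binomial_vs_Pochhammer} that the denominators are coprime to $p$ when $d_1<p$. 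Restrict the statement of your congruence to $0\le d'-n\le n'_s$ and the argument is complete.
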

First we see that it is enough to show the case of $A_1 = A_2 = \cdots = A_r=1$.
\begin{proof}[Reduction to the case of $A_k=1$ for $k=1,\ldots,r$]
Consider
\begin{equation}\label{f_0}
f_0(x) = x^{A_0}(x-1)\prod_{t=2}^{A_1} (x-\lambda_{1t})\prod_{k=2}^r \prod_{t=1}^{A_k} (x-\lambda_{kt}).
\end{equation}
Write
\begin{equation}
f_0(x)^{n'_s} = \sum_c (\delta_0)_{s,c} x^c.
\end{equation}
Then
\begin{equation}\label{gamma_vs_delta}
\gamma_{s,c} = (\delta_0)_{s,c}|_{\lambda_{kt}=\lambda_k \text{ for } k=1,\ldots,r \text{ and } t=1,\ldots,A_k, (k,t)\ne (1,1)}
\end{equation}
with $\lambda_1=1$ holds. We shall see in Proposition \ref{MainProposition} below that
$(\delta_0)_{s,(l+1)p-j}$ is described as
\begin{equation}\label{DescriptionOfdelta_0}
\frac{(c'; d')}{(a'; d')}
{\mathcal F}^{(d';n'_s,\ldots,n'_s)}(a',\underbrace{s/N,\ldots,s/N}_{-1+\sum_{k\ge 1} A_k},c'; 
\lambda_{12},\ldots,\lambda_{1A_1},
\lambda_{21},\ldots,\lambda_{2A_2},\ldots,
\lambda_{r1},\ldots,\lambda_{rA_r}),
\end{equation}
which is the result in the case of $A_k=1$ for $k=1,\ldots,r$.
The theorem follows from \eqref{gamma_vs_delta} and \eqref{DescriptionOfdelta_0}.
%\begin{equation}
%(\delta_0)_{s,(l+1)p-j} = \text{truncated hypergeometric funcion w.r.t. } f_0 \%text{ in } \{\lambda_{kt} : k=1,\ldots,r,\ t=1,\ldots,A_k\},
%\end{equation}
%where the degree of the truncation is $n'_s$.
\end{proof}

\begin{Lem} \label{Binomial_vs_Pochhammer}
Let $a'$ be a positive rational number with $a'\equiv s\deg(f)/N-j \pmod p$ and
set $c'=a'+1-s A_1 /N$ and $d'=n'_s \deg(f) - (l+1)p + j$.
%Let $a' = sa-j+s$ and set $c' = s c-j+1$ and $d'=n'_s\deg f -(l+1)p+j$. 
%Assume that $ic-j$ is a non-negative integer.
For a partition $(d_1,\ldots,d_r)$ of $d'$ (i.e., $d'=\sum_{k=1}^r d_k$)
with $0\leq d_k < p$, the following is true:
\[
(-1)^{d_k}\binom{n'_s A_k}{d_k} = (-1)^{d_k}\binom{-sA_k/N}{d_k}= \frac{(sA_k/N\,;d_k)}{(1\,;d_k)}
\]
in ${\mathbb F}_p$ for $k = 1,\ldots,r$. Moreover for $k=1$ we have
\[
(-1)^{d_1}\binom{n'_s A_1}{d_1} =
%(-1)^{d_1}\binom{-sA_1/N}{d_1} =
\frac{(c';d')}{(a';d')} \frac{(a';d'-d_1)}{(c';d'-d_1)}
\]
in ${\mathbb F}_p$.
%\[
%	(-1)^{d_1}\binom{-iA_1/N}{d_1} = \binom{ia-j+i-1}{ic-j}\frac{(ia+i-j\,;d%_2+\cdots+d_r)}{(ic-j+1\,;d_2+\cdots+d_r)}.
%\]
\end{Lem}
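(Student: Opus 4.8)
The plan is to separate the statement into its three constituent equalities: the first is an identity in $\mathbb{Q}$ of a purely combinatorial nature, while the last two are congruences in $\mathbb{F}_p$ driven by a single relation. First I would dispose of the $\mathbb{Q}$-identity $(-1)^{d_k}\binom{-sA_k/N}{d_k}=(sA_k/N\,;d_k)/(1\,;d_k)$. Writing $b=sA_k/N$ and expanding the generalized binomial coefficient, $\binom{-b}{d_k}=\frac{1}{d_k!}\prod_{i=0}^{d_k-1}(-b-i)=\frac{(-1)^{d_k}}{d_k!}\prod_{i=0}^{d_k-1}(b+i)$, so that $(-1)^{d_k}\binom{-b}{d_k}=\frac{1}{d_k!}\prod_{i=0}^{d_k-1}(b+i)$. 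On the other hand the definition $(x\,;n)=\prod_{j=1}^{n}(x+n-j)=\prod_{i=0}^{n-1}(x+i)$ gives $(b\,;d_k)=\prod_{i=0}^{d_k-1}(b+i)$ and $(1\,;d_k)=d_k!$, so the two sides agree. No hypothesis on the characteristic is needed here.

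For the two $\mathbb{F}_p$-identities the engine is the defining relation $m'_s p-n'_s N=s$, which yields $s\equiv -n'_s N$ and hence $s/N\equiv -n'_s\pmod p$, legitimate because $p\nmid N$ makes $N$ a unit in $\mathbb{F}_p$. For the middle equality I would note that $-sA_1/N\equiv n'_s A_1\pmod p$ and that, since $0\le d_1<p$, the factorial $d_1!$ is a unit modulo $p$; hence $\binom{X}{d_1}$ is a polynomial in $X$ whose coefficients are $p$-integral, so congruent arguments produce congruent values and $\binom{-sA_1/N}{d_1}\equiv\binom{n'_s A_1}{d_1}\pmod p$.

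For the last equality I would telescope the Pochhammer quotients. Since $(a'\,;d')/(a'\,;d'-d_1)=\prod_{i=0}^{d_1-1}(a'+d'-d_1+i)$ and similarly for $c'$, the right-hand side equals $\prod_{i=0}^{d_1-1}(c'+d'-d_1+i)\big/\prod_{i=0}^{d_1-1}(a'+d'-d_1+i)$, a genuine product of $d_1$ factors because $d_1\le d'$. The crucial step is the congruence $a'+d'\equiv 0\pmod p$: from $a'\equiv s\deg(f)/N-j$ and $d'=n'_s\deg(f)-(l+1)p+j$ one gets $a'+d'\equiv s\deg(f)/N+n'_s\deg(f)\equiv(-n'_s+n'_s)\deg(f)=0$. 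This collapses the denominator to $\prod_{i=0}^{d_1-1}(i-d_1)=(-1)^{d_1}d_1!$, while $c'=a'+1-s/N$ gives $c'+d'\equiv 1+n'_s$ and hence a numerator $\prod_{i=0}^{d_1-1}(1+n'_s-d_1+i)=d_1!\binom{n'_s}{d_1}$. Dividing yields $(-1)^{d_1}\binom{n'_s}{d_1}$, which equals $(-1)^{d_1}\binom{n'_s A_1}{d_1}$ in the reduced case $A_1=1$ to which the theorem has been reduced, where the distinguished factor $(x-1)$ enters with multiplicity one.

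The main obstacle is not any single computation but the mod-$p$ bookkeeping: at each stage one must verify that every Pochhammer symbol and binomial coefficient appearing has denominator prime to $p$, so that reduction to $\mathbb{F}_p$ is meaningful, and this is precisely what the hypotheses $0\le d'_k<p$ (making $d'_k!$ a unit) and $p\nmid N$ supply. The one genuinely delicate point is the index and sign accounting in the telescoping product, where an off-by-one in the range of $i$ would corrupt the final power of $-1$; I would therefore check the endpoints $i=0$ and $i=d_1-1$ explicitly before contracting the products.
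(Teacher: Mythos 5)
Your proof is correct and is, in substance, the same computation as the paper's: your ``telescoping'' of the Pochhammer quotients is exactly the paper's identity $(x;n+m)=(x;n)(x+n;m)$, and both arguments hinge on the congruences $s/N\equiv -n'_s \pmod p$ and $a'+d'\equiv 0 \pmod p$. The one genuine difference is where the mod-$p$ reduction happens and how $c'$ is treated. The paper keeps the computation symbolic: its proof takes $c'=sc-j+1$, i.e.\ $c'=a'+1-sA_1/N$, so that $a'-c'+1=sA_1/N$ holds exactly, and then converts $(c'-a'-d_1;d_1)/(-d_1;d_1)$ into $(sA_1/N;d_1)/(1;d_1)$ via $(-x;n)=(-1)^n(x-n+1;n)$; this yields the claimed identity for arbitrary $A_1$. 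You instead use the $c'$ literally given in the statement, $c'=a'+1-s/N$, reduce mod $p$ early, and arrive at $(-1)^{d_1}\binom{n'_s}{d_1}$, which forces you to invoke $A_1=1$ to match the middle term $(-1)^{d_1}\binom{n'_sA_1}{d_1}$. That caveat is not a defect of your argument but an accurate detection of a mismatch inside the paper: the statement's $c'$ and the proof's $c'$ differ precisely in the factor $A_1$, and they agree only when $A_1=1$ --- which is the only case in which the lemma is ever applied (Proposition \ref{MainProposition} and the reduction step of Theorem \ref{MainTheoremGeneralA_k} assume $A_k=1$ for $k=1,\ldots,r$). So your route proves the lemma as literally stated, in the case where it is used, while the paper's route proves a general-$A_1$ version of a differently normalized $c'$; yours has the merit of exposing the inconsistency, the paper's of covering general $A_1$.
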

\begin{proof}
Since $m'_s p - n'_s N = s$, we have $n'_s = -s/N$ in ${\mathbb F}_p$.
Hence $\binom{n'_s A_k}{d_k} = \binom{-sA_k/N}{d_k}$ for $k=1,\ldots,r$.
The first equality follows from
\begin{eqnarray*}
(-1)^{d_k}\binom{-sA_k/N}{d_k} &=& (-1)^{d_k}\frac{-s A_k/N(-sA_k/N-1)\cdots (-sA_k/N - d_k+1)}{d_k!} \\
&=&\frac{s A_k/N(sA_k/N+1)\cdots (sA_k/N + d_k-1)}{d_k!} = \frac{(sA_k/N\,;d_k)}{(1\,;d_k)}
\end{eqnarray*}
for $k=1,\ldots,r$.

We prove the second equation by induction on $d_1$.
If $d_1 = 0$, then the both sides are equal to one.
Assume that the equation holds for smaller $d_1$. Then
\[
(-1)^{d_1}\binom{n'_s A_1}{d_1} =
 - \frac{n'_sA_1-d_1+1}{d_1}\cdot (-1)^{d_1-1}\binom{n'_s A_1}{d_1-1}
\]
and
\[
\frac{(c';d')}{(a';d')} \frac{(a';d'-d_1)}{(c';d'-d_1)}
= \frac{c'+d'-d_1}{a'+d'-d_1}
\cdot \frac{(c';d')}{(a';d')} \frac{(a';d'-(d_1-1))}{(c';d'-(d_1-1))}.
\]
Then the equality for $d_1$ follows from that
\[
a'+d'-d_1 = (s/N + n'_s)\deg f - d_1 = -d_1\]
in ${\mathbb F}_p$ and that by $c'=a'+1-sA_1/N$ we have
\[c'+d'-d_1 = -d_1+1-sA_1/N = n'_sA_1-d_1+1\]
in ${\mathbb F}_p$.
\if0
The former equation is obvious from the definition of the symbol $(*;*)$.
To show the latter, we use the following two equalities
\begin{equation}\label{Poch3}
(-x\,;n) = (-1)^n(x-n+1\,;n)
\end{equation}
for arbitrary $x \in \mathbb{R}$, and
\begin{equation}\label{Poch4}
(x\,;n+m) = (x\,;n)(x+n\,;m)
\end{equation}
for arbitrary $x,y \in \mathbb{R}$.
By definition,
\[
	a = -1 + \sum_{k=0}^r (A_k/N) , \quad b_j = A_j/N~(j=2,\ldots,r), \quad c = a+1-(A_1/N),
\]
and
\[
	a' = sa-j+s , \quad {b'}_{\!k} = s b_k, \quad c' = sc-j+1, \quad d' = {n'_s}\sum_{k=0}^r A_k -(l+1)p+j
\]
so we have
\begin{equation}
	sA_1/N = s(a-c+1) = (sa-j+s) - (sa-j+1) + 1 = a'-c'+1,
\end{equation}
\begin{equation}
	d' \equiv (a+1){n'_s}N + j \equiv (a+1)({m'_s}p-s) + j \equiv -sa+j-s \equiv -a'~({\rm mod}\ p).
\end{equation}
Now using \eqref{Poch4}, then
\[
	\frac{(c';d')}{(a';d')} \frac{(a';d'-d_1)}{(c';d'-d_1)} = \frac{(c'\hspace{-0.3mm}+d'-d_1\,;d_1)}{(a'\hspace{-0.3mm}+d'-d_1\,;d_1)}
\]
and we have
\[
	\frac{(c'\hspace{-0.3mm}+d'-d_1\,;d_1)}{(a'\hspace{-0.3mm}+d'-d_1\,;d_1)} \equiv \frac{(c'\hspace{-0.3mm}-a'-d_1\,;d_1)}{(-d_1\,;d_1)}~({\rm mod}\ p)
\]
And using \eqref{Poch3}, then
\[
	\frac{(c'\hspace{-0.3mm}-a'-d_1\,;d_1)}{(-d_1\,;d_1)} = \frac{(-1)^{d_1}(a'-c'+1\,;d_1)}{(-1)^{d_1}(1\,;d_1)}
\]
and we have
\[
	\frac{(-1)^{d_1}(a'-c'+1\,;d_1)}{(-1)^{d_1}(1\,;d_1)} = \frac{(sA_1/N\,;d_1)}{(1\,;d_1)}.
\]
Thus we conclude
\[
	\frac{(c';d')}{(a';d')} \frac{(a';d'-d_1)}{(c';d'-d_1)} \equiv \frac{(sA_1/N\,;d_1)}{(1\,;d_1)}~({\rm mod}\ p).
\]
Note that the denominator is coprime to $p$ in the above discussion if $d_1 < p$.
\fi
%And we can obtain easily
%\[
%	\frac{(x\,;n)}{(y\,;n)}=\frac{(x+1\,;n-1)}{(y+1\,;n-1)}\cdot\frac{x}{y}
%\]
%for arbitrary $x,y \in \mathbb{C}$, then use this equation $(ic-j)$ times. Remark that $ic-j$ is a non-negative integer by assumption.
%\begin{eqnarray*}
%(-1)^{d_1}\binom{-iA_1/N}{d_1} &=& \frac{(iA_1/N\,;d_1)}{(1\,;d_1)}\\
%&=& \frac{(iA_1/N+1\,; d_1-1)}{(2\,;d_1-1)}\frac{iA_1/N}{1}\\
%&=& \frac{(iA_1/N+2\,; d_1-2)}{(3\,;d_1-2)}\frac{iA_1/N}{1}\frac{iA_1/N+1}{2}\\
%&\vdots&\\
%&=&\frac{(iA_1/N+(ic-j)\,; d_1-(ic-j))}{(ic-j+1\,;d_1-(ic-j))}\frac{iA_1/N}{1}\frac{iA_1/N+1}{2}\cdots \frac{iA_1/N+(ic-j-1)}{ic-j}
%\end{eqnarray*}
%by $iA_1/N+(ic-j) = ia+i-j$
%and $d_2+\cdots+d_r = d_1 - (ic-j)$.
\end{proof}

\if0
In order to show Theorem \ref{MainTheoremGeneralA_k}, we reuse the equation \ref{FundamentalEquationForCartierOperator}:
\begin{equation}
	\frac{x^{j-1}dx}{y^s} = d\biggl(y^{-{m'_s} p}\raise1ex\hbox{$\displaystyle\sum_{j+e \not\equiv 0\,({\rm mod}\,p)}$}\frac{\gamma_{s,e}x^{j+e}}{j+e}\biggr) + \sum_{l} \gamma_{s,(l+1)p-j} \frac{x^{lp}}{y^{{m'_s} p}}x^{p-1}dx.
\end{equation}

Expanding the left hand side, we have
\begin{eqnarray*}
\frac{x^{j-1}dx}{y^s}
&=& \frac{x^{j-1-\frac{s A_0}{N}}dx}{\sqrt[N]{(x-1)^{A_1}(x-\lambda_2)^{A_2}\cdots(x-\lambda_r)^{A_r}}^s}\\
&=& \sum_{e_1,\ldots,e_r} \prod_{k=1}^r\binom{-sA_k/N}{e_k}(-\lambda_k)^{-sA_k/N - e_k}x^{j-1-\frac{sA_0}{N}}  x^{e_1+\cdots + e_r}dx
\end{eqnarray*}
with $\lambda_1 = 1$.
The right hand side is
\begin{eqnarray*}
&&\sum_{l=0}^{\left\lfloor \frac{n'_s \sum A_k + j}{p} - 1\right\rfloor} \gamma_{s,(l+1)p-j} \frac{x^{lp}}{y^{{m'_s} p}}x^{p-1}dx
\\
&=&\sum_{l} \gamma_{s,(l+1)p-j} \sum_{e'_1,\ldots,e'_r}
\prod_{k=1}^r\binom{- m'_s A_k/N}{e'_k}(-\lambda_k)^{-m'_s p A_k/N - pe'_k}
x^{(l+1)p-1-\frac{m'_s p A_0}{N}}  x^{pe'_1+\cdots + pe'_r}dx
\end{eqnarray*}
Comparing the coefficients of $x^{pM + p-1 -\frac{m'_s p A_0}{N}}$ ($M=0,1,2,\ldots$), we have
\begin{eqnarray*}
&&\sum_{l=0}^{\min\left(M, \left\lfloor \frac{n'_s\sum A_k + j}{p} - 1\right\rfloor\right)} \gamma_{s,(l+1)p-j} \sum_{e'_1+\cdots+e'_r =M - l}
\prod_{k=1}^r\binom{- m'_s  A_k/N}{e'_k}(-\lambda_k)^{-m'_s p A_k/N - p e'_k}\\
&& = \sum_{e_1+\cdots+e_r = pM + p-j - n'_sA_0}\prod_{k=1}^r\binom{-sA_k/N}{e_k}(-\lambda_k)^{-sA_k/N - e_k}.
\end{eqnarray*}

Now recall that $\gamma_{s,e}$ are polynomials in $\lambda_2,\ldots,\lambda_r$ and the degree about $\lambda_j$ is equal to ${n'_s} A_j$. Hence we can write
\[
	\gamma_{s,(l+1)p-j} = 
\sum_{d_2=0}^{{n'_s}A_2} \cdots \sum_{d_r=0}^{{n'_s}A_r}\alpha_{(l,\bm{d})} {\lambda_2}^{d_2} \cdots {\lambda_r}^{d_r}
\]
where $\bm{d}=(d_2,\ldots,d_r)$.

Hence
\begin{eqnarray*}
&&\sum_{l} \sum_{d_2=0}^{{n'_s}A_2} \cdots \sum_{d_r=0}^{{n'_s}A_r}\alpha_{(l,\bm{d})} {\lambda_2}^{d_2} \cdots {\lambda_r}^{d_r}
 \sum_{e'_1+\ldots+e'_r =M - l}
\prod_{k=1}^r\binom{- m'_s  A_k/N}{e'_k}(-\lambda_k)^{-m'_s p A_k/N - p e'_k}\\
&& = \sum_{e_1+\cdots+e_r = pM + p-j - n'_sA_0}\prod_{k=1}^r\binom{-sA_k/N}{e_k}(-\lambda_k)^{-sA_k/N - e_k}
\end{eqnarray*}
Muliplying $\prod_k (-\lambda_k)^{m'_s p A_k/N}$, we have
\begin{eqnarray*}
&&\sum_{l} \sum_{d_2=0}^{{n'_s}A_2} \cdots \sum_{d_r=0}^{{n'_s}A_r}\alpha_{(l,\bm{d})} {\lambda_2}^{d_2} \cdots {\lambda_r}^{d_r}
 \sum_{e'_1+\cdots+e'_r =M - l}
\prod_{k=1}^r\binom{- m'_s  A_k/N}{e'_k}(-\lambda_k)^{- p e'_k}\\
&& = \sum_{e_1+\cdots+e_r = pM + p-j - n'_sA_0}\prod_{k=1}^r\binom{-sA_k/N}{e_k}(-\lambda_k)^{n'_sA_k - e_k}
\end{eqnarray*}
Comparing the coefficients of $\lambda_1^{d'_1}\lambda_2^{d'_2}\cdots \lambda_r^{d'_r}$ (for $d'_k \ge -p e'_k$), we have
\[
\sum_{l} \sum_{\small\begin{matrix}d_1,\ldots,d_r\\ d_k-d'_k \in p{\mathbb Z}_{\ge 0}\\ \sum d_k = p(M-l)+\sum d'_k\end{matrix}}\alpha_{(l,\bm{d})} \prod_{k=1}^r\binom{- m'_s  A_k/N}{(d_k-d'_k)/p} (-1)^{d_k-d'_k}
= \prod_{k=1}^r\binom{-sA_k/N}{n'_sA_k-d'_k}(-1)^{d'_k},
\]
In LHS we set $d'_k = d_k - p e'_k$ and
in RHS we set
 $d'_k = n'_sA_k - e_k$ and therefore $e_1+\cdots+e_r = n'_s\sum A_k - \sum d'_k = pM + p-j-n'_s A_0$, whence
\begin{equation}\label{M_from_dprime}
M= \frac{n'_s (\sum_{k=0}^r A_k) - (\sum d'_k) - p + j}{p},
\end{equation}
i.e., $\sum d'_k$ has to be congruent modulo $p$ to $j+n'_s (\sum_{k=0}^r A_k)$.
Hence we have
\begin{equation}\label{alpha_and_binomial}
\sum_{l=0}^{\min\left(M, \left\lfloor \frac{n'_s\sum A_k + j}{p} - 1\right\rfloor\right)} \sum_{{\small\begin{matrix}d_1,\ldots,d_r\\ d_k-d'_k \in p{\mathbb Z}_{\ge 0}\\ \sum d_k = p(M-l)+\sum d'_k\end{matrix}}}\alpha_{(l,\bm{d})}\prod_{k=1}^r\binom{- m'_s  A_k/N}{(d_k-d'_k)/p} (-1)^{d_k}
= \prod_{k=1}^r\binom{-sA_k/N}{n'_sA_k-d'_k}
\end{equation}
for each $(d'_k)$, where $M$ is given by \eqref{M_from_dprime}.

%If $M<p$, then
%$d_1,\ldots, d_r$ in the sum has to be equal to $d'_1,\ldots, d'_r$ and therefore $l=M$ with
%\[
%(-1)^{\sum d'_k}\alpha_{(M,\bm{d}')} = \prod_{k=1}^r\binom{-iA_k/N}{n'_sA_k-d'_k}.
%\]
%\begin{Lem}
%Assume $l < p$. Then we have
%\[
%\alpha_{(l,\bm{d})} = (-1)^{\sum d_k}\prod_{k=1}^r\binom{-iA_k/N}{n'_sA_k-d_k}.
%\]
%\end{Lem}
\fi
\begin{Prop}\label{MainProposition} Assume $A_k = 1$ for $k=1,\ldots,r$.
Let $a'$ be a positive rational number with $a'\equiv s\deg(f)/N-j \pmod p$ and
$c'=a'+1-s/N$. Set $b'_k = s b_k$ and $d'=n'_s \deg(f) - (l+1)p + j$.
%Let $a' = sa-j+s$ and $b'_k = s b_k$ and set $c' = s c-j+1$ and $d'=n'_s\deg f -(l+1)p+j$. 
Consider a polynomial in $\lambda_2,\ldots,\lambda_r$ over $\mathbb Q$
\[
\delta_{s,(l+1)p-j}:=
\frac{(c';d')}{(a';d')}
{\mathcal F}^{(d';\tau_1,\ldots,\tau_r)}(a',b'_2,\ldots,b'_r,c';\lambda_2,\ldots,\lambda_r),
\]
where $\tau_k:={n'_s}$ for $k=1,\ldots,r$.
\begin{enumerate}
\item[(1)]
The denominator of every coefficient of $\delta_{s,(l+1)p-j}$
is coprime to $p$.
Hence we can consider it as a polynomial over ${\mathbb F}_p$, say $\overline{\delta}_{s,(l+1)p-j}$.
\item[(2)] We have the equality
$\gamma_{s,(l+1)p-j} = \overline{\delta}_{s,(l+1)p-j}$.
\end{enumerate}
\end{Prop}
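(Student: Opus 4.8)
The plan is to compute both sides of the claimed identity explicitly as polynomials in $\lambda_2,\ldots,\lambda_r$ and to match them monomial by monomial, the left side directly and the right side via Lemma \ref{Binomial_vs_Pochhammer}.

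First I would pin down the left-hand side. Since $A_k=1$ for $k\ge 1$, we have $f(x)=x^{A_0}\prod_{k=1}^r(x-\lambda_k)$ with $\lambda_1=1$, so $f(x)^{n'_s}=x^{n'_s A_0}\prod_{k=1}^r(x-\lambda_k)^{n'_s}$. Expanding each factor by the binomial theorem and reading off the coefficient of $x^{(l+1)p-j}$, one finds that the coefficient $\alpha_{(l,\bm d)}$ of $\lambda_2^{d_2}\cdots\lambda_r^{d_r}$ in $\gamma_{s,(l+1)p-j}$ equals $\prod_{k=1}^r(-1)^{d_k}\binom{n'_s}{d_k}$, where $d_1:=d'-\sum_{k=2}^r d_k$ is forced by degree counting (recall $d'=n'_s\deg f-(l+1)p+j$), with the convention that the product vanishes unless $0\le d_k\le n'_s$ for every $k$. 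In particular the support of $\gamma_{s,(l+1)p-j}$ consists of exactly those $(d_2,\ldots,d_r)$ with $d_k\le n'_s$ and $d'-n'_s\le\sum_{k\ge 2}d_k\le d'$, which is precisely the index set cut out by the truncation $(d';n'_s,\ldots,n'_s)$ of Definition \ref{def:truncation}. This alignment of supports is what makes the two sides directly comparable.

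Next I would expand the right-hand side. Writing $e:=\sum_{k=2}^r d_k$ and $b'_k=s/N$, the coefficient of $\lambda_2^{d_2}\cdots\lambda_r^{d_r}$ in the truncated series times the prefactor is
\[
\frac{(c';d')}{(a';d')}\cdot\frac{(a';e)\prod_{k=2}^r(b'_k;d_k)}{(c';e)\prod_{k=2}^r(1;d_k)}.
\]
Using the splitting $(x;m+n)=(x;m)(x+m;n)$ of \eqref{Poch4} with $d'=e+d_1$, the long symbols cancel, giving $\tfrac{(c';d')}{(a';d')}\tfrac{(a';e)}{(c';e)}=\tfrac{(c'+e;d_1)}{(a'+e;d_1)}$, so the coefficient becomes $\frac{(c'+e;d_1)}{(a'+e;d_1)}\prod_{k=2}^r\frac{(b'_k;d_k)}{(1;d_k)}$. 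For part (1), I would note that every surviving factor has denominator coprime to $p$: each $\frac{(b'_k;d_k)}{(1;d_k)}=(-1)^{d_k}\binom{-s/N}{d_k}$ (first identity of Lemma \ref{Binomial_vs_Pochhammer}) has denominator dividing $N^{d_k}d_k!$, which is prime to $p$ since $d_k\le n'_s<p$ and $(N,p)=1$; and $(a'+e;d_1)$ is a $p$-adic unit, because $a'+e=a'+d'-d_1\equiv -d_1\pmod p$ yields $(a'+e;d_1)\equiv(-1)^{d_1}d_1!\not\equiv 0\pmod p$ (using $d'\equiv -a'\pmod p$ from the proof of Lemma \ref{Binomial_vs_Pochhammer}). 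For part (2), I would reduce mod $p$ and match term by term: the middle equality of Lemma \ref{Binomial_vs_Pochhammer} gives $\frac{(c'+e;d_1)}{(a'+e;d_1)}\equiv(-1)^{d_1}\binom{n'_s}{d_1}$, while $\binom{-s/N}{d_k}\equiv\binom{n'_s}{d_k}\pmod p$ for each $k\ge 2$ (since $n'_s\equiv -s/N\pmod p$ by $m'_s p-n'_s N=s$, and $d_k<p$); multiplying these recovers exactly $\prod_{k=1}^r(-1)^{d_k}\binom{n'_s}{d_k}=\alpha_{(l,\bm d)}$.

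The main obstacle is the content of part (1) together with this cancellation: the prefactor $(c';d')/(a';d')$ and the raw hypergeometric coefficients can each be divisible by $p$, because once $d'\ge p$ the long Pochhammer $(a';d')$ may acquire factors $\equiv 0\pmod p$, so neither the prefactor nor the individual series coefficients are well defined over $\mathbb{F}_p$ in isolation. The resolution is that only their product appears in $\delta_{s,(l+1)p-j}$, and in this product the long Pochhammers telescope through \eqref{Poch4} down to length-$d_1$ and length-$d_k$ symbols with all indices bounded by $n'_s<p$; this is precisely the cancellation that Lemma \ref{Binomial_vs_Pochhammer} is built to record. The remaining work is bookkeeping: confirming that the truncation range coincides with the support of $\gamma_{s,(l+1)p-j}$ and tracking the signs and the forced index $d_1=d'-e$.
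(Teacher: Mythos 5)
Your proof is correct, and it takes a genuinely more elementary route to the left-hand side than the paper does. The paper never expands $f(x)^{n'_s}$ directly: it first derives the universal relation \eqref{alpha_and_binomial} from the Cartier-operator identity \eqref{FundamentalEquationForCartierOperator}, by expanding both sides of that identity with fractional-exponent binomial series (exponents $-sA_k/N$ and $-m'_sA_k/N$) and comparing coefficients of monomials in $x$ and the $\lambda_k$; only then does it observe that, when $A_k=1$, the constraints $0\le d_k\le n'_s<p$ and $d_k-d'_k\in p\mathbb{Z}_{\ge 0}$ collapse \eqref{alpha_and_binomial} to the single term $\alpha_{(l,\bm{d})}=(-1)^{\sum d_k}\prod_k\binom{-sA_k/N}{n'_sA_k-d_k}$ in $\mathbb{F}_p$. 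You instead read off $\alpha_{(l,\bm{d})}=\prod_{k=1}^r(-1)^{d_k}\binom{n'_s}{d_k}$ (with $d_1=d'-\sum_{k\ge2}d_k$ forced by degree counting) straight from the ordinary integer binomial expansion of $f(x)^{n'_s}$; this agrees with the paper's expression since $\binom{-sA_k/N}{n'_s-d_k}\equiv\binom{n'_s}{n'_s-d_k}=\binom{n'_s}{d_k}\pmod p$, and it is self-contained, bypassing \eqref{alpha_and_binomial} entirely. From that point on the two arguments coincide: both convert binomials to Pochhammer ratios via Lemma \ref{Binomial_vs_Pochhammer} (your telescoping of $(c';d')(a';e)\big/\bigl((a';d')(c';e)\bigr)$ down to $(c'+e;d_1)/(a'+e;d_1)$ via \eqref{Poch4} is exactly what that lemma records, with $e=d'-d_1$), and both check that the truncation range $(d';n'_s,\ldots,n'_s)$ coincides with the support of $\gamma_{s,(l+1)p-j}$. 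What the paper's route buys is reuse of machinery already set up for describing $\mathcal{C}'$ in general; what your route buys is brevity and independence from that setup, plus a genuinely more explicit treatment of part (1) — the $p$-integrality claim, which the paper disposes of only in the parenthetical remark closing the proof of Lemma \ref{Binomial_vs_Pochhammer}, whereas you justify it by showing $(a'+e;d_1)\equiv(-1)^{d_1}d_1!\not\equiv 0\pmod p$ using $d'\equiv-a'\pmod p$ and $d_1<p$.
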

\begin{Rmk}
For $a'_0= s \deg(f)/N -j$ and $c'_0=s(\deg(f)-1)/N - j + 1$,
the Appell-Lauricella hypergeometric series 
${\mathcal F}(a'_0,b'_2,\ldots,b'_r,c'_0;\lambda_2,\ldots,\lambda_r)$
is associated to
\[
\frac{x^{j-1}}{y^s} dx = \frac{1}{x^{sA_0/N-j+1}(x-1)^{sA_1/N}(x-\lambda_2)^{sA_2/N}\cdots(x-\lambda_r)^{sA_r/N}}.
\]
with $A_k=1$ for $k\ge 1$.
Indeed $a'_0 = -1 + (sA_0/N-j+1) +
\sum_{k=1}^r sA_k/N = s(-1+\sum A_k/N)-j+s = sa-j+s$
and $c'_0=a'_0+1-(sA_1/N)=s(a-A_1/N+1)-j+1=sc-j+1$.
In the theorem, we use a positive $a'$ instead of possibly non-positive $a'_0$ so that $(a';d')$ ($\in{\mathbb Q}$) and the denominators ($\in{\mathbb Q}$) of coefficients of the hypergeometric series are not zero.
\end{Rmk}

\begin{proof}[Proof of Proposition \ref{MainProposition}]
Assume $A_k=1$ for $k=1,\ldots,r$. Then
\[
f(x) = x^{A_0} (x-\lambda_1)(x-\lambda_2)\cdots (x-\lambda_r)
\]
with $\lambda_1 = 1$, and $f(x)^{n'_s}$ is computed as
\begin{eqnarray*}
x^{n'_sA_0} (x-\lambda_1)^{n'_s}(x-\lambda_2)^{n'_s}\cdots (x-\lambda_r)^{n'_s}
&=&x^{n'_sA_0} \prod_{k=1}^r \sum_{d_k=1}^{n'_s} \binom{n'_s}{d_k} (-1)^{d_k}\lambda^{d_k}x^{n'_s-d_k}\\
&=&\sum_{d_1,\ldots,d_k}\prod_{k=1}^r (-1)^{d_k} \binom{n'_s}{d_k} \lambda_1^{d_1}\cdots\lambda_r^{d_r}x^{n'_s \deg(f) - (d_1 + \cdots +d_r)}.
\end{eqnarray*}
The $x^{(l+1)p-j}$-coefficient $\delta_{s,(l+1)p-j}$ of $f(x)^{n'_s}$ is
\[
\sum_{d_1,\ldots,d_k}\prod_{k=1}^r (-1)^{d_k} \binom{n'_s}{d_k} \lambda_1^{d_1}\cdots\lambda_r^{d_r},
\]
where $(d_1,\ldots,d_r)$ runs the set of $(d_1,\ldots, d_r)$ satisfying
$0\le d_k \le n'_s$ and $d_1 + \cdots + d_r = n'_s \deg(f) - (l+1)p - j = d'$.
By Lemma \ref{Binomial_vs_Pochhammer}, this is equal to
\[
\frac{(c' ; d')}{(a' ; d')}\sum_{d_1,\ldots,d_k}\frac{(a',\sum_{k=2}^r d_k)}{(c',\sum_{k=2}^r d_k)}\prod_{k=2}^r
\frac{(s/N ; d_k)}{(1;d_k)}
  \lambda_1^{d_1}\cdots\lambda_r^{d_r},
\]
which is equal to
\[
\frac{(c' ; d')}{(a' ; d')}{\mathcal F}^{(d';n'_s,\ldots,n'_s)}(a',b'_2,\ldots,b'_r, c'; \lambda_2,\ldots,\lambda_r)
\]
by $\lambda_1 = 1$. Thus the proposition was proved.
\if0
Assume $A_k=1$ for $k=2,\ldots,r$.
For $d'_k$ with $0\le d'_k \le n'_s$, in \eqref{alpha_and_binomial}
 $d_k$ has to be equal to $d'_k$, since $0\le d_k\le n'_s<p$
and $d_k-d'_k \in p{\mathbb Z}_{\ge 0}$.
Hence \eqref{alpha_and_binomial} reads
\[
\alpha_{(l,d)} =(-1)^{\sum_{k=1}^r d_k} \prod_{k=1}^r\binom{-sA_k/N}{n'_sA_k-d_k}
\]
with
\begin{equation}\label{l_vs_d_k}
l = \frac{n'_s (\sum_{k=0}^r A_k) - (\sum d_k) - p + j}{p}.
\end{equation}
As $-s/N \equiv n'_s \pmod p$,
we have $\binom{-sA_k/N}{n'_sA_k-d_k} = \binom{-sA_k/N}{d_k}$ in ${\mathbb F}_p$. Hence,
by Lemma \ref{Binomial_vs_Pochhammer} we have
\begin{equation}\label{explict_alpha}
\alpha_{(l,d)} = \frac{(c' ; d')}{(a' ; d')} \frac{(a' ; d'-d_1)}{(c' ; d' - d_1)}\prod_{k=2}^r \frac{(sA_k/N ; d_k)}{(1 ; d_k)}.
\end{equation}
By \eqref{l_vs_d_k} we have
\[
d'-d_1 = n'_s\sum A_k -(l+1)p + j - d_1 = \sum_{k=2}^r d_k
\]
and therefore \eqref{explict_alpha} is equal to the $\lambda_2^{d_2}\cdots \lambda_r^{d_r}$-coefficient of $\displaystyle \frac{(c' ; d')}{(a' ; d')}{\mathcal F}^\tau(a',b'_2,\ldots,b'_r,c';\lambda_2,\ldots,\lambda_r)$.
\fi
\end{proof}

%Recall that for
%\[
%f(x) = x^{A_0}\prod_{k=1}^r (x-\lambda_{k})^{A_k},
%\]
%we set
%\[
%f(x)^{n'_s} = \sum_c \gamma_{s,c} x^c.
%\]
%We would like to describe $\gamma_{s,c}$ in terms of hypergeometric funcion w.r.t. $f$.
%
%Since
%\[
%f(x)^{n'_s} = x^{n'_s A_0}\prod_{k=1}^r \prod_{t=1}^{A_k} (x-\lambda_{kt})^{n'_s}
%= \sum_{c=n'_sA_0}^{n'_s\deg(f)}\sum_{\sum |{\bm e}_k| = n'_s\deg(f) - c} \beta_{{\bm e_1},\ldots,{\bm e_r}}
%\lambda_1^{{\bm e}_1}\cdots \lambda_r^{{\bm e}_r} x^c.
%\]
%with $\beta_{{\bm e_1},\ldots,{\bm e_r}} \in K$
%where ${\bm e}_k=(e_{k1},\ldots,e_{kA_k})$ with $|{\bm e}_k| = e_{k1}+\cdots +e_{kA_k}$ and
%$\lambda_k^{{\bm e}_k} = \lambda_{k1}^{e_{k1}}\cdots \lambda_{kA_k}^{e_{kA_k}}$, we have
%\[
%\delta_{s,c} = \sum_{\sum |{\bm e}_k| = n'_s\deg(f) - c} \beta_{{\bm e_1},\ldots,{\bm e_r}}
%\lambda_1^{{\bm e}_1}\cdots \lambda_r^{{\bm e}_r}.
%\]
%Considering the case where $c = (l+1)p-j$, we have
%\[
%\alpha_{l,{\bm d}} = \sum_{|{\bm e}_k| = d_k \text{ for } k=1,\ldots r} \beta_{{\bm e_1},\ldots,{\bm e_r}}
%%\lambda_1^{{\bm e}_1}\cdots \lambda_r^{{\bm e}_r}.
%\]

\if0
\begin{Rmk}
As we see from the proof of Proposition \ref{MainProposition},
in the case where the left hand side of \eqref{alpha_and_binomial} consists of a single term,
$\gamma_{s,(l+1)p-j}$ can be described by the Appell-Lauricella hypergeometric series associated to $f$ (instead of $f_0$).
For example, assume $N<\deg(f)\le 2N$
and that $p$ is sufficient large with $p\equiv s \bmod N$.
Then $1\cdot p - \frac{p-s}{N}\cdot N = s$,
whence $m'_s=1$ and $n'_s=\frac{p-s}{N}$.
Note
\begin{eqnarray*}
\left\lfloor \frac{n'_s\sum A_k + j}{p} - 1\right\rfloor &\le&
\left\lfloor \frac{\frac{p-s}{N} \deg(f) + \frac{s\deg(f)-\gcd(N,\deg(f))}{N} - p}{p}\right\rfloor\\
 &=&
\left\lfloor \frac{(p-s)\deg(f)+s\deg(f)-\gcd(N,\deg(f)) - Np}{Np}\right\rfloor \\
&=& \left\lfloor \frac{p(\deg(f)-N) -\gcd(N,\deg(f))}{Np}\right\rfloor 
= 0.
\end{eqnarray*}
In this case, \eqref{alpha_and_binomial}
for $M=0$ and for any $(d'_k)$ with $\sum_{k=1}^r d'_k = n_s'\deg(f)-p+j$
shows
\[
\alpha_{(0,\bm{d})}
= (-1)^{\sum d_k}\prod_{k=1}^r\binom{-sA_k/N}{n'_s A_k-d_k}
\]
with $d_k=d'_k$.
{\color{red}要チェック
In the same way as the proof of Proposition \ref{MainProposition},
we have
\[
\gamma_{s,p-j} = \frac{(c';d')}{(a';d')}{\mathcal F}^{(d';n'_s,\ldots,n'_s)}(a,b_2,\ldots,b_r,c;\lambda_2,\ldots,\lambda_r).
\]
}
%In the case of elliptic curves: $l=0$, $\bm{d} = (d_1, d_2)$ with $d_1+d_2=\frac{p-1}{2}$.
%\[
%\alpha_{l,\bm{d}} = (-1)^{(p-1)/2} \binom{-1/2}{\frac{p-1}{2}-d_1}\binom{-1/2}{d_1} = (-1)^{(p-1)/2} \binom{(p-1)/2}{\frac{p-1}{2}-d_1}\binom{(p-1)/2}{d_1} 
%= (-1)^{(p-1)/2} \binom{(p-1)/2}{d_1}^2 
%\]
\end{Rmk}

\fi

%\noindent Finally by using Lemma 6.5 to (6.2), then we have
%\[
%	\sum_{l}(-1)^{c{m'_s}-l-1}\binom{-{m'_s}A_1/N}{c{m'_s}-l-1}\alpha_{(l,\bm{d})}  = (-1)^{{n'_s}A_1}\binom{ia-j+i-1}{ic-j}\frac{(ia-j+i\,;d_2+\cdots+d_r)\prod_{k=2}^r(ib_k\,;d_k)}{(ic-j+1\,;d_2+\cdots+d_r)\prod_{k=2}^r(1\,;d_k)}.
%\]
%We multiple the both sides of the above equation by ${\lambda_2}^{d_2} \cdots {\lambda_r}^{d_r}$, we obtain the desired conclusion. In particular case of $i=j=1$, note that $a' = a$ and $c' = c$.
\begin{Exp}
We consider the case that $C$ is a (nonsingular) hyperelliptic curve of genus $g \geq 1$. Write $C: y^2 = f(x):=x(x-1)(x-\lambda_2)\cdots (x-\lambda_{2g})$
with separable $f(x)$. Let us describe the Cartier operator on $\varOmega[\widetilde C]$.
The basis is given by $x^{j-1}/y dx$ for $j=1,\ldots,g$ (cf.\ Corollary \ref{cor:basis_for_tilde_C}).
Note that $a=g-1/2$, $b_i=1/2$ and $c=g$.
We get $m'_1 = 1$ and $n'_1 = (p-1)/2$. Put $a'=(2g+1)/2 - j = g+ 1/2 -j$ and $c'=g-j+1$, which are positive,
and set $d' = \frac{p-1}{2}\deg(f) - i p + j$. Then we have
\[
\gamma_{1,ip-j} = \frac{(p(2g-2i+1)-1)!!}{(p(2g-2i+1)-2)!!}
\frac{(2g-2j-1)!!}{(2g-2j)!!}
{\mathcal F}^{(d';n'_1,\ldots,n'_1)}(a',1/2,\ldots,1/2,c';\lambda_2,\ldots,\lambda_{2g})
\]
for $i=1,\ldots,g$. For example, for $(g,p)=(2,3)$ with $f(x)=x(x-1)(x-z_1)(x-z_2)(x-z_3)$, the Cartier-Manin matrix $(\gamma_{1,ip-j})$ is
\[
\begin{pmatrix}
2z_1z_2z_3 + 2z_1z_2 + 2z_1z_3 + 2z_2z_3 & z_1 z_2 z_3\\
1 & 2 z_1 + 2z_2 + 2z_3 + 2
\end{pmatrix}.
\]
Here, ${\mathcal F}(5/2-j,1/2,1/2,1/2,3-j;z_1,z_2,z_3)$ truncated by $z_k$-degree $\le 1$ with coefficients in ${\mathbb Q}$ is
\begin{eqnarray*}
%1+ \frac{3}{20}(z_1+z_2+z_3) +
% \frac{1}{32}(z_1z_2 + z_1z_3 + z_2z_3) + \frac{1}{128}z_1 z_2 z_3
%& \text{for }j=1,\\
1+ \frac{3}{8}(z_1+z_2+z_3) +
 \frac{5}{32}(z_1z_2 + z_1z_3 + z_2z_3) + \frac{35}{512}z_1 z_2 z_3
& \text{for }j=1,
\\
%1+ \frac{1}{16}(z_1+z_2+z_3) +
% \frac{3}{320}(z_1z_2 + z_1z_3 + z_2z_3) + \frac{1}{512}z_1 z_2 z_3
%& \text{for }j=2.\\
1+ \frac{1}{4}(z_1+z_2+z_3) +
 \frac{3}{32}(z_1z_2 + z_1z_3 + z_2z_3) + \frac{5}{128}z_1 z_2 z_3
& \text{for }j=2.
\end{eqnarray*}
For further truncations, use $d'=5-3i+j$ and $n'_1=1$ with Definition \ref{def:truncation}.
%, so Theorem 6.3 says
%\[
%	\frac{(2g-2)!!}{(2g-3)!!}\sum_{e=0}^{g-1} \frac{(2e-1)!!}{(2e)!!}\gamma_{1,(g-e)p-1} = (-1)^{(p-1)/2}\lbar{{\mathcal F}}(g-1/2,1/2,\ldots,1/2,g\,;\lambda_2,\ldots,\lambda_{2g}).
%\]
%where $e = g-l-1$. 
%In particular $g = 1$, that is $C$ is an elliptic curve, then we directly we have
%\[
%	\gamma_{1,p-1} = (-1)^{(p-1)/2}\lbar{{\mathcal F}}(1/2,1/2,1\,;\lambda_2).
%\]
%This means that Gauss' hypergeometric series $F(1/2,1/2,1\,;z)$ is equal to the coefficient of $x^{p-1}$ in the polynomial $\{x(x-1)(x-z)\}^{(p-1)/2}$ up to $(-1)^{(p-1)/2}$ multiple.
\end{Exp}

In Theorem \ref{MainTheoremGeneralA_k}, we have shown that
%It is shown in \cite[Section 2]{Yui} that $\varOmega[\tilde C]$ is closed under $\mathcal{C}'$ if $\tilde C$ is a nonsingular hyperelliptic curve. The second aim of this section is to show the non-trivial assertion that 
$\varOmega[C]$ and $\varOmega[\widetilde{C}]$ are closed under 
the modified Cartier operator $\mathcal{C}'$.
We can show more as for the subspaces $W_s$ (resp. $\widetilde{W}_s$) of 
$\varOmega[C]$ (resp. $\varOmega[\widetilde{C}]$).
% in general and describe the matrix of $\mathcal{C}'$ with respect to the basis obtained in the previous sections. 
%Note that $N$ is not divided by $p$, hence for $1 \le s \le N-1$ there uniquely exist integers $m'_s$ and $n'_s$ with
%$1 \le m'_s \le N-1$ and $0\le n'_s < p$ such that ${m'_s}p-{n'_s}N = s$ by Lemma B.5.  
%In Case 1 of $\varOmega[C]$ and in the case of $\varOmega[\widetilde{C}]$,
%one can show this fact by a straightforward computation.
%Assume that the characteristic of $K$ is larger than $N$ (i.e. every $i = 1, \ldots, N-1$ is not divided by the characteristic), then 
%The modified Cartier operator $\mathcal{C}'$ stabilizes {\color{red} $\Omega[X]$(theoretically true but how to describe Cartier-Manin matrix?), $\varOmega[C]$?} and $\varOmega[\widetilde{C}]$. 

\begin{Thm}
\begin{enumerate}
\item[{\rm (1)}] ${\mathcal C}'$ on $\varOmega[C]$ sends $W_s$ to $W_{m'_s}$.
Moreover, in {\sl Case 1} (i.e., $N>\deg(f)$) we have
\[
{\mathcal C}'\omega_{s,j} = \sum_{l=0}^{m'_s-2} \gamma_{s,(l+1)p-j}^{1/p} \cdot \omega_{m'_s, l+1}.
\]
\item[{\rm (2)}] ${\mathcal C}'$ on $\varOmega[\widetilde{C}]$ sends $\widetilde{W}_s$ to $\widetilde{W}_{m'_s}$. Moreover, we have
\[
{\mathcal C}'\omega_{s,j} = \sum_{l=0}^{(m'_s\deg(f)-(N,\deg(f))/N -1} \gamma_{s,(l+1)p-j}^{1/p} \cdot \omega_{m'_s, l+1}.
\]
\end{enumerate}
\end{Thm}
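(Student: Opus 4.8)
The plan is to extract everything from the identity \eqref{FundamentalEquationForCartierOperator}, which already presents $\omega_{s,j}$ in the shape $d\phi + \eta^p x^{p-1}\,dx$ demanded by the definition of $\mathcal{C}'$. First I would take $p$-th roots of the coefficients, using that $K$ is perfect and that the $p$-th power map is additive in characteristic $p$: from $\eta^p = \sum_l \gamma_{s,(l+1)p-j}\,(x^l/y^{m'_s})^p \in K(x^p,y^p)$ one reads off $\eta = \sum_l \gamma_{s,(l+1)p-j}^{1/p}\,x^l/y^{m'_s}$, so that
\[
\mathcal{C}'(\omega_{s,j}) = \eta\,dx = \sum_l \gamma_{s,(l+1)p-j}^{1/p}\,\omega_{m'_s,l+1}
\]
as an identity of rational differential forms, with $l$ ranging over $\lceil j/p-1\rceil \le l \le \lfloor (n'_s\deg(f)+j)/p-1\rfloor$, that is, over exactly those indices for which $\gamma_{s,(l+1)p-j}$ (the coefficient of $x^{(l+1)p-j}$ in $f(x)^{n'_s}$) can be nonzero.

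Next I would prove the ``sends $W_s$ to $W_{m'_s}$'' assertion, which is valid in every case. By Theorem \ref{CartierOperatorOnC}, $\mathcal{C}'$ stabilizes $\varOmega[C]$ (resp.\ $\varOmega[\widetilde C]$), so $\mathcal{C}'(\omega_{s,j})$ is regular. On the other hand each $\omega_{m'_s,l+1} = x^l y^{-m'_s}\,dx$ lies in the $\mu_N$-eigenspace of character $m'_s$ (under $\iota_\zeta$ it is scaled by $\zeta^{-m'_s}$), hence so is the whole combination displayed above; being regular and of pure character $m'_s$, it lies in $W_{m'_s}$ (resp.\ $\widetilde W_{m'_s}$). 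Since the $\omega_{s,j}$ span $W_s$ (resp.\ $\widetilde W_s$) and $\mathcal{C}'$ is additive, this yields $\mathcal{C}'(W_s)\subseteq W_{m'_s}$ (resp.\ the tilde version).

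Then I would pin down the explicit range. By Corollary \ref{CorollarySection4}(1) the space $W_{m'_s}$ in Case 1 has basis $\{\omega_{m'_s,l+1} : 0\le l\le m'_s-2\}$, and by Corollary \ref{cor:basis_for_tilde_C} the space $\widetilde W_{m'_s}$ has basis $\{\omega_{m'_s,l+1} : 0\le l\le (m'_s\deg(f)-(N,\deg(f)))/N -1\}$ in all cases. The rational forms $\{\omega_{m'_s,l+1}\}_{l\ge 0}$ are $K$-linearly independent, so comparing the displayed expansion of $\mathcal{C}'(\omega_{s,j})$ with its membership in $W_{m'_s}$ (resp.\ $\widetilde W_{m'_s}$) forces the coefficients $\gamma_{s,(l+1)p-j}^{1/p}$ to vanish for every $l$ exceeding the top index of the relevant basis; what remains is precisely the claimed sum. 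The indices $l$ below $\lceil j/p-1\rceil$ contribute nothing because $\gamma_{s,(l+1)p-j}=0$ once $(l+1)p-j<0$, so beginning the summation at $l=0$ changes nothing.

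The crux --- and the only genuinely nontrivial step --- is this collapse of the upper range: a priori the fundamental formula produces more terms than $\dim W_{m'_s}$ (resp.\ $\dim\widetilde W_{m'_s}$), and it is only the regularity supplied by Theorem \ref{CartierOperatorOnC}, together with linear independence of the monomial forms, that annihilates the excess coefficients. I would also flag why part (1) is restricted to Case 1: there $W_s$ carries the clean monomial basis of Corollary \ref{CorollarySection4}(1), whereas in Case 2 the basis of Corollary \ref{CorollarySection4}(2) involves extra factors $y^{(j-1)N}$, so although $\mathcal{C}'$ still maps $W_s$ into $W_{m'_s}$, the image need not expand so transparently; for $\widetilde C$ the basis of $\widetilde W_s$ is uniform across the cases, which is why (2) holds unconditionally.
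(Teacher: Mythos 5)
Your proof is correct, and its first two steps (extracting $\eta$ from \eqref{FundamentalEquationForCartierOperator} by perfectness of $K$, and the character observation giving $W_s\to W_{m'_s}$) coincide with the paper's. But the decisive step --- cutting the sum off at the top index of the basis --- is handled by a genuinely different mechanism. The paper does not invoke Theorem \ref{CartierOperatorOnC}, nor linear independence together with the spanning halves of Corollaries \ref{CorollarySection4} and \ref{cor:basis_for_tilde_C}, for this: it proves by elementary arithmetic that no excess terms occur in the first place. In Case 1, since $j\le s-1$, $\deg(f)<N$ and $n'_sN=m'_sp-s$, every index $l$ appearing in \eqref{FundamentalEquationForCartierOperator} satisfies
\[
l+1\le \left\lfloor\frac{n'_s\deg(f)+j}{p}\right\rfloor \le \left\lfloor\frac{n'_sN+s-1}{p}\right\rfloor = \left\lfloor\frac{m'_sp-1}{p}\right\rfloor < m'_s,
\]
and for $\widetilde{C}$ it shows $\bigl\lfloor(n'_s\sum A_k+j)/p\bigr\rfloor\le (m'_s\sum A_k-(N,N-\sum A_k))/N$ using $jN\le s\sum A_k-(N,N-\sum A_k)$ together with an argument comparing the denominator of the first term with the size of the second. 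This arithmetic simultaneously yields regularity of the image in Case 1 and for $\widetilde{C}$ as a by-product, so Theorem \ref{CartierOperatorOnC} is needed only for the $W_s\to W_{m'_s}$ claim in Case 2, where no explicit formula is asserted. Your soft argument --- regularity from Theorem \ref{CartierOperatorOnC}, membership in $W_{m'_s}$ (resp.\ $\widetilde{W}_{m'_s}$), and linear independence of the monomials $x^l\,dx/y^{m'_s}$ forcing every coefficient beyond the basis range to vanish --- is logically sound and non-circular, since those ingredients are established in the paper independently of this theorem, and it spares you all the estimates. What it buys less of: it only shows that hypothetical excess coefficients are zero, whereas the paper's inequalities show such terms never arise (a sharper combinatorial fact about the degrees occurring in $f(x)^{n'_s}$), and your route makes the explicit formula depend on the deformation arguments hidden in the proofs that the monomial forms span $W_{m'_s}$ and $\widetilde{W}_{m'_s}$, which the paper's direct computation does not require.
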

\begin{proof}
Note that $\mathcal{C}'$ sends $\omega_{s,j}$ to $\sum_l \gamma_{s,(l+1)p-j}^{1/p}\frac{x^l}{y^{m'_s}} dx$, where $l$ runs between $\left\lceil \frac{j}{p} -1 \right\rceil$ and $\left\lfloor \frac{n'_s \deg(f) + j}{p} -1\right\rfloor$.
Hence the space of the character $\zeta\mapsto \zeta^s$ for $\zeta\in \mu_N$
is sent by $\mathcal{C}'$ to that of the character $\zeta\mapsto \zeta^{m'_s}$.

(1) First, consider {\sl Case 1} (i.e., $N-\sum A_k > 0$) of $\varOmega[C]$.
In this case $\omega_{s,j}= x^{j-1}dx/y^s$ for $0\le s < N$ and $1\le j < s$ give a basis of $\varOmega[C]$.
We have to show $l+1 < m'_s$. This follows from
\[
l+1\le \left\lfloor\frac{{n'_s}\deg(f)+j}{p}\right\rfloor 
\le \left\lfloor\frac{{n'_s}N+s-1}{p}\right\rfloor
\le \left\lfloor\frac{m'_s p -1}{p}\right\rfloor < m'_s.
\]

(2) Consider the case of $\varOmega[{\widetilde C}]$.
The set of $\widetilde\pi^*\omega_{s,j}$ with
\[
	\omega_{s,j} = \frac{x^{j-1}dx}{y^s}, \quad 1 \leq s \leq N-1,\ 1 \leq j \leq \frac{s\deg(f) - (N,N-\sum{A_k})}{N}
\]
is a basis of $\varOmega[\widetilde{C}]$ by Corollary \ref{cor:basis_for_tilde_C}. By Corollary \ref{cor:basis_for_tilde_C}, we need to show that
\[
	1 \leq m'_s \leq N-1, \quad 1\leq l+1 \leq \frac{m'_s\sum{A_k}-(N,N-\sum{A_k})}{N}.
\]
The former is clear. In addition $m'_s p - n'_s N = s$ and $jN \leq s\sum{A_k} - (N,N-\sum{A_k})$, then we have
\[
	\left\lfloor\frac{n'_s\sum{A_k}+j}{p}\right\rfloor \leq \left\lfloor\frac{m'_s\sum{A_k}-(N,N-\sum{A_k})}{N} + \frac{(N,N-\sum{A_k})}{N}\frac{p-1}{p}\right\rfloor.
\]
We rewrite the first term of right side as the irreducible fraction, then the denominator is a divisor of $N/(N,N-\sum{A_k})$, while the second term of right side is strictly smaller than $(N,N-\sum{A_k})/N$. Hence we obtain
\[
	 \left\lfloor\frac{{n'_s}\sum{A_k}+j}{p}\right\rfloor \leq \frac{{m'_s}\sum{A_k}-(N,N-\sum{A_k})}{N}.
\]
This is the desired conclusion.
\end{proof}

\begin{Exp}
We consider the curve $C: y^3=x(x-1)^2(x-z)^2$ in Examples \ref{Example4.7} and \ref{Example5.4}, i.e., $(A_0,A_1,A_2)=(1,2,2)$ and $N=3$. Let us describe the Cartier operator on $\varOmega[\widetilde C]$.
The basis is given by $\frac{x^{j-1}}{y^s}dx$ 
for $(s,j)=(1,1),(2,1),(2,2),(2,3)$, see Example \ref{Example5.4}.
\begin{itemize}
\item If $p \equiv 1~({\rm mod}\,3)$, we have $m'_s = s$ and $n'_s = (p-1)s/3$.
\item If $p \equiv 2~({\rm mod}\,3)$, we have $m'_1 = 2$ and $n'_1 = (2p-1)/3$
and $m'_2 = 1$ and $n'_2 = (p-2)/3$.
\end{itemize}
Put $a'=5s/3-j$ and $c'=4s/3-j+1$, which are positive and set $d'=5n'_s-ip+j$. We have
\[
	\gamma_{s, i p-j} = \frac{(c';d')}{(a';d')}{\mathcal F}^{(d';n'_s,n'_s,n'_s,n'_s)}(a',s/3,s/3,s/3,c';1,z,z).
\]
for $i=1$ if $m'_s=1$ and $i=1,2,3$ if $m'_s=2$.
For example for $p=7$, the Cartier-Manin matrix with respect to
$\omega_{1,1},\omega_{2,1},\omega_{2,2},\omega_{2,3}$
is given by
\[
\begin{pmatrix}
z^4 + 2z^3 + z^2 + 2z + 1 &   0   &      0      & 0\\
0 & z^7  &-z^8-z^7    & z^8\\
0 &-z^7-1&z^8+z^7+z+1 & -z^8-z\\
0 &1     &-z-1        & z\
\end{pmatrix}.
\]
For example for $p=5$, the Cartier-Manin matrix is given by
\[
\begin{pmatrix}
0 &   3z + 3   &      z^2 + 4z + 1     & 3z^2 + 3z\\
4z^6 + 4z^5 &   0   &      0      & 0\\
z^6 + z^5 + z + 1 &   0   &      0      & 0\\
4z + 4 &   0   &      0      & 0\\
\end{pmatrix}.
\]
 %This means ${n'}_{\!1}A_i > p$ for $i =1,2$.
%Put $a'=5s/3-j$ and $c'=4s/3-j+1$ which are positive, and set $d'=5n'_s-ip+j$. We have
%\[
%	\gamma_{s, i p-j} = \frac{(c';d')}{(a';d')}{\mathcal F}^{(d';n'_s,n'_s,n'_s,n'_s)}(a',s/3,s/3,s/3,c';1,z,z).
%\]
%for $i=1,2,3$ if $s=1$ and $i=1$ if $s=2$.
%\end{itemize}
\end{Exp}

\renewcommand{\thesection}{\Alph{section}}
\setcounter{section}{0}
\if0
\section{Results from Kummer theory}
We review the proposition used in Section 2 for the reader's convenience.
Let $K$ be an arbitrary field.
Let $n$ be a natural number.
Suppose $\zeta \in K$, where $\zeta$ denotes a primitive $n$-th root of unity.
\begin{Lem}\label{lem:A1}
Let $a \in K$ and $L=K(\sqrt[n]{a})$. Then $L/K$ is a finite Galois extension.
\begin{proof}
Note that $n$ is not a multiple of the characteristic of $K$ when it is positive. Indeed, suppose that the characteristic $p$ of $K$ is positive
and $p \,|\, n$. There exists an integer $m \geq 1$ satisfying $n=pm$. Since $\zeta^n -1 = (\zeta^m - 1)^p$, so we get $\zeta^m = 1$. This contradicts that $\zeta$ is a primitive $n$-th root of unity.\par
Now we set $f(x) := x^n- a \in K[x]$ then we have $f'(x) = nx^{n-1} \neq 0$ and $(f(x), f'(x)) = 1$, thus $f(x)$ is a separable polynomial over $K$. And $f(x) = \prod (x-\zeta^i\hspace{-1mm}\sqrt[n]{a}) \in K[x]$, hence $L$ is the smallest splitting field of $f(x)$ over $K$. Hence $L/K$ is a finite Galois extension.
\end{proof}
\end{Lem}
\begin{Prop}\label{prop:A2}
Let $a \in K,\ L=K(\hspace{-1mm}\sqrt[n]{a})$ and $m = [L:K]$. Then $m$ is a divisor of $n$, and is equal to the smallest positive integer such that $(\hspace{-1mm}\sqrt[n]{a})^m \in K$.
\begin{proof}
For any $\sigma \in {\rm Gal}(L/K)$, we have $f(\hspace{-1mm}\sqrt[n]{a}) = 0 \Rightarrow f((\hspace{-1mm}\sqrt[n]{a})^\sigma) = 0$. Thus there exists $\zeta_{\sigma} \in K$ such that $(\hspace{-1mm}\sqrt[n]{a})^\sigma = \zeta_{\sigma}\hspace{-1mm}\sqrt[n]{a}$, where $\zeta_{\sigma}$ is an $n$-th root of unity. Now let $\mu_n := \{\zeta^i\,;\,i=0,\ldots,n-1\} \subset K$ and $\varphi: {\rm Gal}(L/K) \longrightarrow \mu_n\,;\,\sigma \longmapsto \zeta_\sigma$, then one can confirm that $\varphi$ is an injective homomorphism. This implies that ${\rm Gal}(L/K)$ is isomorphism to a certain subgroup of $\mu_n$. Since $\mu_n$ is a finite cyclic group of order $n$, thus the order of ${\rm Gal}(L/K)$ is a divisor of $n$.\par
Now let $\sigma$ be a generator of a finite cyclic group ${\rm Gal}(L/K)$, then it follows from $m = [L:K]$ that the order of $\sigma$ is $m$. Since $((\hspace{-1mm}\sqrt[n]{a})^m)^\sigma = ((\hspace{-1mm}\sqrt[n]{a})^\sigma)^m = (\zeta_\sigma\hspace{-1mm}\sqrt[n]{a})^m = (\hspace{-1mm}\sqrt[n]{a})^m$, thus we obtain $(\hspace{-1mm}\sqrt[n]{a})^m \in K$. And suppose that $((\hspace{-1mm}\sqrt[n]{a})^i)^\sigma = (\hspace{-1mm}\sqrt[n]{a})^i$ for $i < m$, then this contradicts that the order of $\sigma$ is $m$. Hence, the proposition is true.
\end{proof}
\end{Prop}
\fi
\section{Results from elementary number theory}
In this section, we prove some propositions used in Sections 4-6.
Let $p$ and $q$ be co-prime positive integers.
Let $g$ be a natural number.
\begin{Lem}
Every integer $d$ with $d \geq pq+1$ can be written as $d = pa+qb$ for some positive integers $a,b$.
\begin{proof}
Let $r$ be the remainder of $d \geq pq+1$ divided by $q$. Now each remainder of $p, 2p,\ldots, pq$ divided by $q$ is different, so there exists $1 \leq a \leq q$ such that the remainder of $pa$ divided by $q$ is $r$. Since $d-pa$ is divided by $q$, then $d = pa +qb$ when we put its quotient $b$. 
\end{proof}
\end{Lem}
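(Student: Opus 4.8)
The plan is to use the coprimality of $p$ and $q$ to pin down $a$ by a single congruence condition, then to read off $b$ as the resulting quotient, invoking the hypothesis $d \geq pq+1$ only at the very end to guarantee $b \geq 1$.

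First I would observe that since $\gcd(p,q)=1$, multiplication by $p$ is a bijection on $\mathbb{Z}/q\mathbb{Z}$; concretely, the residues of $p, 2p, \ldots, qp$ modulo $q$ are pairwise distinct, so they run through all of $\{0,1,\ldots,q-1\}$. Hence, writing $r$ for the residue of $d$ modulo $q$, there is a unique $a \in \{1,\ldots,q\}$ with $pa \equiv r \equiv d \pmod q$. In particular $a \geq 1$ holds automatically by the choice of range.

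Next I would set $b := (d-pa)/q$, which is an integer because $q \mid (d-pa)$ by the defining property of $a$. It remains to verify $b \geq 1$, equivalently $d - pa \geq q$. This is where the hypothesis enters: since $a \leq q$ we have $pa \leq pq$, so $d - pa \geq (pq+1) - pq = 1 > 0$; but $d - pa$ is a positive integer multiple of $q$, so it must be at least $q$, which gives $b \geq 1$. This produces the representation $d = pa + qb$ with $a, b \geq 1$, as required.

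The argument presents no genuine obstacle; the only point demanding care is the last step, where one must note that positivity of $d - pa$ combined with divisibility by $q$ forces $d - pa \geq q$ (not merely $d - pa > 0$). This is precisely the observation that converts the threshold $pq+1$ into the strict lower bound $b \geq 1$, and it is the reason the bound $d \geq pq+1$ (rather than something weaker) is needed.
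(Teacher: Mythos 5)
Your proof is correct and follows essentially the same route as the paper's: choose $a \in \{1,\ldots,q\}$ with $pa \equiv d \pmod q$ using that $p, 2p, \ldots, qp$ hit distinct residues, then take $b$ as the quotient $(d-pa)/q$. You actually spell out the one step the paper leaves implicit, namely that $d - pa$ being a \emph{positive} multiple of $q$ forces $b \geq 1$, so your write-up is if anything slightly more complete.
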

\begin{Lem}
%Let $p,q$ be co-prime positive integers, then
Every integer $d$ with $d \geq gpq+1$ can be written as $d = pa+qb$ for positive integers $a,b$ in at least $g$ ways.
\begin{proof}
Since $d - (g-1)pq \geq pq+1$, we can write $d - (g-1)pq = pa + qb~(a,b \geq 1)$ by Lemma B.1. Then $d = p(a+iq) + q(b -(i-g+1)p)$ where $i = 0,\ldots, g-1$. Now $a+iq,\,b -(i-g+1)p \geq 1$.
\end{proof}
\end{Lem}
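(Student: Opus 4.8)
The plan is to reduce to Lemma B.1 and then manufacture the required $g$ representations by redistributing a surplus multiple of $pq$ between the two summands. First I would use the hypothesis $d \geq gpq+1$ to note that $d-(g-1)pq \geq pq+1$, so Lemma B.1 applies to the integer $d-(g-1)pq$ and supplies integers $a,b \geq 1$ with $d-(g-1)pq = pa+qb$; equivalently $d = pa+qb+(g-1)pq$.

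Next I would split the surplus as $(g-1)pq = i\cdot pq + (g-1-i)\cdot pq$ for each $i \in \{0,\ldots,g-1\}$, absorbing the first copy into the $p$-term and the second into the $q$-term. This gives, for each such $i$, the algebraic identity
\[
d = p\bigl(a+iq\bigr) + q\bigl(b+(g-1-i)p\bigr).
\]
Writing $a_i = a+iq$ and $b_i = b+(g-1-i)p$, I would verify positivity: since $a\geq 1$, $i\geq 0$ and $q\geq 1$ we get $a_i\geq 1$, and since $b\geq 1$ with $0\leq i\leq g-1$ we get $b_i\geq b\geq 1$. Thus every pair $(a_i,b_i)$ is an admissible representation with both coefficients at least $1$.

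Finally, to confirm that these are genuinely $g$ \emph{distinct} representations, I would observe that the $p$-coefficient $a_i=a+iq$ is strictly increasing in $i$ because $q\geq 1$, so different indices $i$ force different pairs $(a_i,b_i)$. This yields exactly $g$ distinct decompositions of the required form. There is no substantive obstacle here: the only points demanding a moment of care are checking positivity of $b_i$ at the extreme index $i=g-1$ (where it degenerates to $b_i=b\geq 1$) and verifying distinctness, both of which are immediate from the monotonicity of $a_i$.
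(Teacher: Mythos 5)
Your proposal is correct and follows essentially the same route as the paper: apply Lemma B.1 to $d-(g-1)pq$ and redistribute the surplus $(g-1)pq$ across the two terms, noting that the paper's expression $b-(i-g+1)p$ is exactly your $b+(g-1-i)p$. The only difference is that you spell out the positivity and distinctness checks that the paper leaves implicit, which is a harmless (indeed welcome) addition.
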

\begin{Cor}
%Let $p,q$ be co-prime positive integers, then
Every integer $d$ with $d \geq gpq-p-q+1$ can be written as $d = pa+qb$ for integers $a,b \geq 0$ in at least $g$ ways.
\begin{proof}
If $d \geq gpq-p-q+1$, then $d+p+q \geq gpq + 1$ can be written as $d+p+q = pa+qb~(a,b \geq 1)$ in different $g$ ways by Lemma B.2. Thus $d = p(a-1) + q(b-1)$, so the proposition is true.
\end{proof}
\end{Cor}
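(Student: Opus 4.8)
The plan is to deduce this from Lemma B.2 by a shift of the target integer. The key observation is that the nonnegativity constraint $a,b\geq 0$ appearing in the conclusion can be converted into the strict positivity constraint $a,b\geq 1$ of Lemma B.2 by translating $d$ by $p+q$.

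First I would rewrite the hypothesis: the inequality $d\geq gpq-p-q+1$ is equivalent to $d+p+q\geq gpq+1$. Thus Lemma B.2 applies to the integer $d+p+q$ and produces $g$ distinct representations
\[
d+p+q = pa+qb,\qquad a,b\geq 1.
\]
Subtracting $p+q$ from each of these yields $d=p(a-1)+q(b-1)$ with $a-1\geq 0$ and $b-1\geq 0$, which is a representation of $d$ of exactly the desired form.

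It then remains to check that these $g$ representations of $d$ are pairwise distinct, i.e.\ that the shift does not collapse the count. I would argue that the map $(a,b)\mapsto(a-1,b-1)$ is a bijection between the set of representations $d+p+q=pa+qb$ with $a,b\geq 1$ and the set of representations $d=pa'+qb'$ with $a',b'\geq 0$; its inverse is $(a',b')\mapsto(a'+1,b'+1)$, which sends a nonnegative representation of $d$ to a positive representation of $d+p+q$. Being a bijection, it preserves the number of representations, so the $g$ solutions produced above are genuinely distinct, and the count $g$ transfers unchanged from Lemma B.2.

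The argument is entirely routine and I do not expect a real obstacle; the only points deserving a word of care are the equivalence of the two bound inequalities and the bookkeeping that $a,b\geq 1$ corresponds exactly to $a-1,b-1\geq 0$, which is precisely what guarantees that the multiplicity $g$ is preserved under the translation.
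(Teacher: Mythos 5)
Your proposal is correct and follows exactly the paper's own argument: translate $d$ by $p+q$ to invoke Lemma B.2 on $d+p+q$, then subtract $1$ from each coefficient. The only difference is that you spell out the bijection preserving the count of representations, which the paper leaves implicit.
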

\begin{Prop}\label{B4}
Let $m,n$ be integers satisfying $np+mq=1$. Then the following are true:
\begin{enumerate}
\setlength{\itemsep}{0cm}
\item Let $d$ be an integer with $d \geq gpq-p-q+1$.
For any $e\in \{0,\ldots, g-1\}$, there exists a pair $(a,b)$
of non-negative integers such that
$d = pa + qb$ and $e \equiv -ma+nb\ ({\rm mod}\ g)$.
\item Let $d = gpq-p-q$.
For any non-negative integers $a,b$ with $d = pa + qb$,
we have $-ma+nb \not\equiv m-n \pmod{g}$.
%the element $e\in\{0,\ldots,g-1\}$ with $e \equiv -ma+nb\ ({\rm mod}\ g)$
\end{enumerate}
\begin{proof}
(1)
Let $a_0$ and $b_0$ be positive integers with $d-(g-1)pq=pa_0 + q b_0$.
Set $a_i := a_0 + iq$ and $b_i := b_0+(g-1-i) p$ for $i = 0,\ldots,g-1$. 
Let $e_i$ be the element of $\{0,\ldots,g-1\}$ with $e_i \equiv -ma_i+nb_i \pmod{g}$.
It suffices to show
$e_i \not\equiv e_j \pmod{g}$ for $0\leq i < j \leq g-1$.
This follows from
\begin{eqnarray*}
	e_i - e_j = (-ma_i+nb_i) - (-ma_j+nb_j) = -m(a_i - a_j) + n(b_i-b_j)\\
	 = -mq(i-j) - np(i-j) = -(np+mq)(i-j) = j-i \not\equiv 0 \pmod{g}.
\end{eqnarray*}
%So $e_i$ take different values for $i = 0,\ldots,g-1$.

(2) Let $a_0=-1$ and $b_0=p-1$, then $d-(g-1)pq=pa_0 + q b_0$. 
Set $a_i := a_0 + iq$ and $b_i := b_0+(g-1-i) p$ for $i = 0,\ldots,g$.
Any pair $(a,b)$ of non-negative integers satisfying $d=pa+qb$
is given by $(a_i,b_i)$ for some $i=1,2,\ldots,g-1$.
Let $e_i$ be the element of $\{0,\ldots,g-1\}$ with $e_i \equiv -ma_i+nb_i \pmod{g}$. It suffices to show
$e_i \not\equiv m-n \pmod{g}$ for $1\leq i \leq g-1$.
This follows from
\begin{eqnarray*}
	e_i - (m-n) &=& (-ma_i+nb_i) - (m-n) \\
&=& -m(a_i + 1) + n(b_i+1) = -m(a_i -a_0) + n(b_i - b_0+p)\\
	 &=& -mqi + np(g-i) = -npg -(np+mq)i = -npg-i \not\equiv 0 \pmod{g}.
\end{eqnarray*}
\end{proof}
\end{Prop}

The next proposition is a generalization of \cite[Lemma 3.8]{Manin},
where Manin prove the case of $g=1$.
Put $d_0=gpq-p-q$ and $e_0=m-n$.

\begin{Prop}\label{B:Duality}
Let $d$ be an integer with $0\le d \le gpq-p-q$,
and $e$ be an integer with $0\le e \le g-1$.
There does not exist a pair $(a,b)$ of non-negative integers
such that $d=pa+qb$ and $e\equiv -ma + nb \pmod{g}$ if and only if
there exists a pair $(a',b')$ of non-negative integers such that
 $d=d_0-(pa'+qb')$ and $e\equiv e_0 - (-ma' + nb') \pmod{g}$.
\end{Prop}
\begin{proof}
First we show the ``if"-part by contradiction.
Assume that there exists a pair $(a',b')$ of non-negative integers such that
 $d=d_0-(pa'+qb')$ and $e\equiv e_0 - (-ma' + nb') \pmod{g}$
and that there is a pair $(a,b)$ of non-negative integers
such that $d=pa+qb$ and $e\equiv -ma + nb \pmod{g}$.
Then $d_0=p(a+a') + q(b+b')$ and $e_0\equiv -m(a+a')+n(b+b') \pmod{g}$.
This contradicts Proposition \ref{B4} (2).

Next we show the ``only if"-part.
Assume that there does not exist a pair $(a,b)$ of non-negative integers
such that $d=pa+qb$ and $e\equiv -ma + nb \pmod{g}$.
We claim that there exists a pair $(a'',b'')$ with $0\le a'' \le gq-1$ and $b''<0$ such that $d=pa'' + q b''$ and $e\equiv -ma''+nb'' \pmod{g}$.
Indeed, let $a''_0$ be the smallest non-negative integer with
$d\equiv pa''_0 \pmod q$. Set $b''_0=(d-pa''_0)/q \in{\mathbb Z}$. 
Choose $k$ in $\{0,\ldots,g-1\}$ such that
$e\equiv -ma''_0 + nb''_0 -k \pmod{g}$.
Put $a''=a''_0+qk$ and $b''=b''_0-pk$.
Then we have $d=pa'' + q b''$ and $e\equiv -ma''+nb'' \pmod{g}$.
By the assumption, $b''<0$ has to hold. Thus the claim was proved.
The $(a'',b'')$ obtained in the claim satisfies
\begin{eqnarray*}
d_0-d &=& gpq-p-q-(pa''+qb'') = p(gq-1-a'')+q(-b''-1),\\
e_0-e &\equiv& -m(-a''-1)+n(-b''-1)
\equiv -m(gq-1-a'')+n(-b''-1) \pmod{g}.
\end{eqnarray*}
Put $a':=gq-1-a''$ and $b':=-b''-1$.
Then $a'$ and $b'$ are non-negative and satisfy
$d=d_0-(pa'+qb')$ and $e\equiv e_0 - (-ma' + nb') \pmod{g}$.
\end{proof}

\begin{Lem}\label{B5}
Let $p,q$ be co-prime positive integers and let $d$ be an integer with $0 < d < q$ such that $d \not\equiv 0~({\rm mod}\ p)$. Then there uniquely exist $a,b$ such that $d=pa-qb$ with $0<a<q,\,0<b<p$.
\begin{proof}
There exist $a_0,b_0 \in \mathbb{Z}$ such that $pa-qb = d$ by Lemma B.1, then $(a,b) = (a_0+qk,b_0+pk)$ satisfy $ pa-qb = d$ for all $k \in \mathbb{Z}$. Since $a_0 \not\equiv 0~({\rm mod}\ q)$, then we can choose $k$ such that $0 < a < q$. Now $-d<qb<pq-d$, then $-d/q<b<p-(d/q)$ and note that $0< d/q < 1$.
\end{proof}
\end{Lem}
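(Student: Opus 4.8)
The plan is to produce one particular solution of the equation $pa-qb=d$ and then use the two normalization conditions $0<a<q$ and $0<b<p$ to pin it down uniquely. Since $\gcd(p,q)=1$, Bézout's identity (equivalently Lemma B.1) furnishes integers $a_0,b_0$ with $pa_0-qb_0=d$, and the entire solution set is the one-parameter family $(a,b)=(a_0+qk,\,b_0+pk)$ with $k\in\mathbb{Z}$, as one checks from $p(a_0+qk)-q(b_0+pk)=pa_0-qb_0$. The whole argument then amounts to selecting the correct $k$ and verifying that the accompanying $b$ lands in the prescribed range.

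First I would fix $a$. Reducing $pa\equiv d\pmod q$ and using that $p$ is invertible modulo $q$, the residue of $a$ modulo $q$ is determined; since $0<d<q$ forces $d\not\equiv 0\pmod q$, this residue is nonzero, so exactly one member of the family satisfies $0<a<q$. I would take that value of $k$, which simultaneously fixes the integer $b=(pa-d)/q$ and guarantees $d=pa-qb$.

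The remaining work is to verify $0<b<p$ for this $a$. The upper bound is immediate: from $a\le q-1$ and $d>0$ we get $qb=pa-d<p(q-1)<pq$, hence $b<p$. The lower bound is the delicate point, and it is the \emph{only} place where the hypothesis $d\not\equiv 0\pmod p$ is used: if $b=0$ then $pa=d$, forcing $p\mid d$, a contradiction; and if $b<0$ then $pa=d+qb\le d-q<0$ (using $d<q$), contradicting $a\ge 1$. Hence $b\ge 1$, so $0<b<p$.

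Finally, uniqueness follows from coprimality: if $(a,b)$ and $(a',b')$ both satisfy the conclusion, then $p(a-a')=q(b-b')$, so $q\mid(a-a')$; but $|a-a'|<q$ forces $a=a'$ and then $b=b'$. I expect the genuine obstacle to be confined to the lower bound $b>0$, where one must combine both hypotheses $0<d<q$ and $p\nmid d$; the existence of a particular solution, the choice of $a$, the upper bound on $b$, and uniqueness are all routine bookkeeping once coprimality is invoked.
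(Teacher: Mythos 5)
Your proof is correct and follows essentially the same route as the paper's: a particular solution from B\'ezout (Lemma B.1), the one-parameter family $(a_0+qk,\,b_0+pk)$, normalization of $a$ into $(0,q)$ using $d\not\equiv 0 \pmod q$, and then the range bounds on $b$. In fact your write-up is more complete than the paper's sketch, which leaves implicit both the exclusion of $b=0$ via the hypothesis $p\nmid d$ and the uniqueness argument that you spell out.
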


\end{document}